\newtheorem{prop}{Proposition}[section]
\newtheorem{obs}{Remark}[section]
\newcommand{\ve}{{\varepsilon}}
\newcommand{\Hper}{{H_{per}^{\frac{\alpha}{2}}(\mathbb{T})}}
\newcommand{\Hn}{{H_{per}^{\frac{\alpha}{2}}}}
\newcommand{\Hpers}{{H_{per}^{s}(\mathbb{T})}}
\newcommand{\Hns}{{H_{per}^{s}}}
\numberwithin{equation}{section}
\numberwithin{figure}{section}
\newtheorem{theorem}{Theorem}[section]
\newtheorem{proposition}[theorem]{Proposition}
\newtheorem{remark}[theorem]{Remark}
\newtheorem{lemma}[theorem]{Lemma}
\newtheorem{definition}[theorem]{Definition}
\begin{document}

%%%%%%%%%%%%%%%%%%%%%%%%%%%%%%%%%%%%%%%%%%%%%%%%%%%%%%%%%%%%%%%%%%%%%%

\title[ Periodic waves for the fractional BBM]{On the existence, uniqueness and stability of Periodic Waves for the fractional Benjamin-Bona-Mahony equation}

\subjclass[2000]{76B25, 35Q51, 35Q53.}

\keywords{BBM
	type-equations, existence and uniqueness of minimizers, spectral stability, orbital stability, Petviashvili's method.}
\thanks{$^*${\it Corresponding author}\\
	\indent {\it Date}: June, 2021.}

\author{Sabrina Amaral}

\address[S. Amaral]{Department of Mathematics, State University of
	Maring\'a, Maring\'a, PR, Brazil. }
\email{sabrinasuelen20@gmail.com}

\author{Handan Borluk}

\address[H. Borluk]{Ozyegin University, Department of Natural and Mathematical Sciences, Cekmekoy, Istanbul, Turkey. }
\email{handan.borluk@ozyegin.edu.tr }

\author{Gulcin M. Muslu}

\address[G. M. Muslu]{Istanbul Technical University, Department of Mathematics, Maslak,
	Istanbul,  Turkey.}
\email{gulcin@itu.edu.tr }

\author{F\'abio Natali$^*$}

\address[F. Natali]{Department of Mathematics, State University of
	Maring\'a, Maring\'a, PR, Brazil. }
\email{fmanatali@uem.br }

\author{Goksu Oruc}

\address[G. Oruc]{Istanbul Technical University, Department of Mathematics, Maslak,
         Istanbul,  Turkey.}
\email{topkarci@itu.edu.tr }

\maketitle

\begin{abstract}
The  existence, uniqueness and stability of periodic traveling waves for the fractional Benjamin-Bona-Mahony equation is considered. In our approach, we give sufficient conditions to prove a uniqueness result for the single-lobe solution obtained by a constrained minimization problem. The spectral stability is then shown by determining that the associated linearized operator around the wave restricted to the orthogonal of the tangent space related to the momentum and mass at the periodic wave has no negative eigenvalues. We propose the Petviashvili's method to investigate the spectral stability of the  periodic waves for the fractional Benjamin-Bona-Mahony equation, numerically. Some remarks concerning the orbital stability of periodic traveling waves are also presented.
\end{abstract}

\section{Introduction.}
In this paper, we show the spectral/orbital stability of zero mean periodic traveling wave solutions associated to the fractional Benjamin-Bona-Mahony (fBBM) type equation
	\begin{equation}\label{rbbm}
	u_t +u_x +uu_x +(D^{\alpha}u)_t=0.
	\end{equation}
Here $u=u(x,t):\mathbb{R \times \mathbb{R} \rightarrow \mathbb{R}}$ is a $2\pi$-periodic function  at the $x-$variable and $D^{\alpha}$ represents the fractional differential operator defined as a Fourier multiplier by
	\begin{equation}\label{fracderivative}
	\widehat{D^{\alpha}g}(\xi)= |\xi|^{\alpha}\widehat{g}(\xi), \ \ \xi \in \mathbb{Z},
	\end{equation}
where $0< \alpha \leq 2$.\\
\indent For the case $\alpha=2$, we have the well known Benjamin-Bona-Mahony (BBM) equation arising as an improvement of the Korteweg-de Vries equation for modeling long surface gravity waves of small amplitude which propagate unidirectionally.
It also describes the propagation of long waves with a balance of nonlinear
and dissipative effects. In addition, the model appears  in the analysis of the surface waves of long wavelength in liquids, hydromagnetic
waves in cold plasma, acoustic-gravity waves in compressible
fluids, and acoustic waves in harmonic crystals (see \cite{BBM} and related references).\\
\indent If $\alpha=1$, we obtain the regularized Benjamin Ono equation (rBO) which is a regularized version of the standard Benjamin-Ono equation. The rBO equation is a model for the time evolution of long-crested waves at the interface between two immiscible fluids. Some situations in which the equation is useful are the pycnocline in the deep ocean, and the two-layer system created by the inflow of fresh water from ariver into the sea \cite{kalisch}.

The fBBM equation \eqref{rbbm} admits the following conserved quantities, smooth in their domains, as

	\begin{equation}\label{quantconserved}
	\displaystyle E(u)= \frac{1}{2}\int_{-\pi}^{\pi}\big[ (D^{\frac{\alpha}{2}}u)^2 - \frac{1}{3}u^3\big] dx
	\end{equation}
	
		\begin{equation}\label{momentum}
	\displaystyle P(u) = \frac{1}{2}\int_{-\pi}^{\pi} \big[ (D^{\frac{\alpha}{2}}u)^2 +u^2\big]dx,
	\end{equation}
and
	\begin{equation}\label{mass}
	\displaystyle M(u)= \int_{-\pi}^{\pi} u dx.
	\end{equation}
\indent A traveling wave solution for \eqref{rbbm} is of the form $u(x,t)= \phi(x- ct)$, where $\phi:\mathbb{T} \rightarrow \mathbb{R}$, with $\mathbb{T}= [-\pi, \pi]$, is a smooth $2\pi$-periodic function and $c$ represents the wave speed. Substituting this form into the fBBM equation \eqref{rbbm}, we obtain
	\begin{equation}\label{ode1}
	cD^{\alpha}\phi + (c-1)\phi -\frac{1}{2}\phi^2 +A =0,
	\end{equation}
where $A$ is a constant of integration. If we suppose that $\phi:\mathbb{T} \rightarrow \mathbb{R}$ is a periodic function with the zero mean property, then $A = A(c)$ is defined by
	\begin{equation}\label{integconst}
	\displaystyle A(c) :=  \frac{1}{4\pi}\int_{-\pi}^{\pi} \phi^2(x) dx.
	\end{equation}
Combining  \eqref{ode1} and \eqref{integconst}, we see that the equation $(\ref{ode1})$ can be expressed by the following boundary-value problem
	\begin{equation}\label{ode2}
	cD^{\alpha}\phi + (c-1)\phi =\frac{1}{2}\Pi_0\phi^2, \ \ \phi \in H^{\alpha}_{per}(\mathbb{T}),
	\end{equation}
where $\Pi_0 f := f- \frac{1}{2\pi}\int_{-\pi}^{\pi} f(x) dx$ is the projection operator restricted to the mean zero $2\pi$-periodic functions.

The conserved quantities \eqref{quantconserved}, \eqref{momentum} and \eqref{mass} play an important role in our spectral stability analysis. In fact, they allow us to consider the augmented Lyapunov functional
	\begin{equation}\label{lyapfunct}
	G(u)= E(u) + (c-1)P(u) + AM(u),
	\end{equation}
where $G'(\phi)=0$, that is, $\phi$ is a critical point of $G$. Moreover, from \eqref{lyapfunct} we obtain the Hessian operator around the wave $\phi$, which is commonly called in the current literature, as linearized operator around $\phi$ and given by
	\begin{equation}\label{operator}
	\mathcal{L}:= G''(\phi) = cD^{\alpha} + c - 1 - \phi.
	\end{equation}
It is easy to see that $\mathcal{L}$ is a self-adjoint operator defined in $L^2_{per}(\mathbb{T})$ with dense domain $H_{per}^{\alpha}(\mathbb{T})$.

Next, we establish our linearized spectral problem for the fBBM equation. Indeed, substituting $u(x,t)= \phi(x-ct)+v(x-ct, t)$ into \eqref{rbbm} and using that \eqref{ode2} is satisfied by $\phi$, we obtain that $v$ is a solution of the nonlinear equation
	\begin{equation}\label{eqnonlinear}
	(\partial_t- c\partial_x)(v + D^{\alpha}v) + \partial_x(v+ \phi v) + v \partial_x v=0.
	\end{equation}
Replacing \eqref{eqnonlinear} by its linearization about $\phi$ we obtain the basic spectral stability problem
	\begin{equation}\label{linearproblem}
	\partial_t(v + D^{\alpha}v) = \partial_x \mathcal{L}v
	\end{equation}
where $\mathcal{L}$ is given by \eqref{operator}. Since $\phi$ depends on $x$ and but not $t$, the equation \eqref{linearproblem} has a separation of variables of the form $v(x,t)= e^{\lambda t}\eta(x)$ with some $\lambda \in \mathbb{C}$ and $\eta: \mathbb{T} \rightarrow \mathbb{C}$. These arguments allow us to consider the spectral problem
	\begin{equation}\label{spectproblem}
	\partial_x \mathcal{L}\eta = \lambda (1 + D^{\alpha})\eta.
	\end{equation}
\indent We can rewrite the spectral problem in $(\ref{spectproblem})$ as
\begin{equation}
	J \mathcal{L}\eta= \lambda \eta,
	\label{hamilt1}\end{equation}
where $J:=(1+D^{\alpha})^{-1}\partial_x$. Let us denote the spectrum of $J\mathcal{L}$ by $\sigma(J\mathcal{L})$. In a general setting, the periodic wave $\phi\in H^{\alpha}_{per}(\mathbb{T})\cap X_0$ is said to be spectrally stable if $\sigma(J\mathcal{L}) \subset i\mathbb{R}$ in  $L^2_{per}(\mathbb{T})$. Otherwise, that is, if $\sigma(J\mathcal{L})$ in $L^2_{per}(\mathbb{T})$ contains a point $\lambda$ with $Re(\lambda)>0$, the periodic wave $\phi$ is said to be spectrally unstable.\\
 \indent According to the classical spectral stability results as in \cite{grillakis1}, the problem given by $(\ref{hamilt1})$ can not be handled in periodic context since $J= (1+D^{\alpha})^{-1}\partial_x$ is not a one-to-one operator. To overcome this difficulty, we can consider the linearization of $(\ref{rbbm})$ around $\phi$ restricted to the space of zero mean periodic function to obtain the modified problem (see \cite{DK})
	\begin{equation}\label{hamilt2}
	J\mathcal{L}|_{X_0}\chi = \lambda \chi,
	\end{equation}
where $\mathcal{L}|_{X_0}=\mathcal{L}+\frac{1}{2\pi}\langle\phi,\cdot\rangle$ is a restriction of $\mathcal{L}$ on the closed subspace $X_0$ of periodic functions with zero mean,
	\begin{equation}
	\displaystyle X_0 = \left\{f \in L^2_{per}(\mathbb{T}): \int_{-\pi}^{\pi} f(x)dx =0 \right\}.
	\end{equation}
Clearly, operator $\mathcal{L}|_{X_0}$ is defined in the space $X_0$ with domain $H_{per}^{\alpha}(\mathbb{T})\cap X_0$. The new spectral problem $(\ref{hamilt2})$ allows to consider a new definition for the spectral stability restricted to the periodic space $X_0$ as:
\begin{definition}\label{defistab1}
	The periodic wave $\phi \in H^{\alpha}_{per}(\mathbb{T})\cap X_0$ is said to be spectrally stable if $\sigma(J\mathcal{L}|_{X_0}) \subset i\mathbb{R}$ in  $X_0$. Otherwise, that is, if $\sigma(J\mathcal{L}|_{X_0})$ in $X_0$ contains a point $\lambda$ with $Re(\lambda)>0$, the periodic wave $\phi$ is said to be spectrally unstable.
\end{definition}

\begin{remark}
	In Definition $\ref{defistab1}$, we consider a smooth zero mean periodic wave $\phi$. However, it is possible to obtain the spectral stability/instability in the space $X_0$ using the same definition for non-zero mean periodic waves as reported, for instance, in \cite{DK} and \cite{hur}.
\end{remark}
\indent In  \cite{hur}, the existence of minimizers for the energy functional $V:=P-E$ with fixed momentum $P$ and mass $M$ has been established. The periodic wave obtained by this minimization problem is smooth in terms of the independent parameters $A$ and $c$ in $(\ref{ode1})$ and it has been determined that they are spectrally stable in the sense of Definition $\ref{defistab1}$. The orbital stability of the periodic minimizers is then established by assuming that the 2-by-2 determinant $\{P,M\}_{A,c}$ is nonzero (see the precise definition of orbital stability in the last section of this paper). We also refer the reader to see \cite{CNP} for a different approach. However, in the referred work, a different minimization problem compared to \cite{hur} has been established and the authors obtained a different criterium for the orbital stability in the energy space.\\
\indent For the cases $\alpha=1$ and $\alpha=2$, precise results of orbital stability for the equation $(\ref{ode1})$ with $A=0$ were determined in \cite{an-ban-scia1} combined that $\mathcal{L}$ has only one negative eigenvalue which is simple and zero is a simple eigenvalue whose associated eigenfunction is $\phi'$. In order to show that $\mathcal{L}$ has only one negative eigenvalue which is simple and zero is a simple eigenvalue with associated eigenfunction $\phi'$ they have used the Fourier expansion of the explicit solutions together with the main theorem in \cite{natali}.
In addition, the explicit solutions were also used to calculate the positiveness of $\frac{d}{dc}P(\phi)$.\\
\indent Concerning the case $\alpha=2$ in $(\ref{rbbm})$ with a general power nonlinearity as
\begin{equation}\label{pBBM}
	u_t+u_x+(p+1)u^pu_x-u_{xxt}=0,
\end{equation}
where $p\geq1$ is an integer and traveling $\phi$ waves satisfying the equation
\begin{equation}\label{pBBM1}
	-c\phi''+(c-1)\phi-\phi^{p+1}=0,
\end{equation}
existence and stability results of small amplitude periodic waves near of the equilibrium solution $(c-1)^{1/p}$ for $c>1$ have been considered in \cite{haragus}. These waves are spectrally stable for all speeds $c>1$ when $1\leq p \leq 2$ in both $L^2(\mathbb{R})$ and $C_{b}(\mathbb{R})$. Here, $C_{b}(\mathbb{R})$ stands the space of bounded uniformly continuous functions. For $p\geq3$, there exists a critical speed $c_p=\frac{p(p+4)}{p^2+p+6}$, $1<c_p<\frac{p}{p-3}$, where the waves are stable for $c\in \left(c_p,\frac{p}{p-3}\right)$ and spectrally unstable for $c\in (1,c_p)\cup \left(\frac{p}{p-3},\infty\right)$ by considering the same perturbations. For perturbations which have the same period $L$ as the wave, all small amplitude periodic waves are spectrally stable for all values of $p$.\\
\indent For the model $(\ref{rbbm})$ posed on unbounded domains, the author in \cite{angulo1} showed the existence of solitary waves employing the arguments in \cite{FL}. He used the abstract approach in \cite{grillakis1} (\cite{lin}) and determined the orbital (spectral) stability of solitary waves if:\\
i) $c>1$ and $\alpha\in \left[\frac{1}{2},1\right)$,\\
ii) $c>c_0:=\frac{2+\sqrt{2(3\alpha-1)}}{6\alpha}>1$ and $\alpha\in \left(\frac{1}{3},\frac{1}{2}\right)$.\\
If $\alpha\in\left(\frac{1}{3},\frac{1}{2}\right)$ and $c\in (1,c_0)$, the solitary wave is said to be spectrally unstable using the arguments in \cite{lin}. His proof relies on the scaling argument for the solitary wave $\phi$ which gives  good features about the  Hessian matrix $\frac{d}{dc}P(\phi)$. The same method in \cite{angulo1} can be used to establish the orbital (spectral) stability of solitary waves for the case $\alpha\in[ 1,2]$ and $c>1$. \\
\indent We can cite some additional contributions concerning the spectral/orbital stability of periodic waves for related equations to $(\ref{rbbm})$ and other dispersive equations (see \cite{ANP}, \cite{ACN}, \cite{natali}, \cite{CJ2019}, \cite{DK}, \cite{gallay}, \cite{gavage}, \cite{hakkaev12}, \cite{johnson13}, \cite{NPU1}, \cite{NPL}, and references therein). In some cases, the study of the stability was considered in relation to perturbations of the same period of the wave and other types of perturbations (in the latter case, we can consider: anti-periodic, bounded and so on). \\
\indent  The main results of our work can be presented according to the following theorem:
\begin{theorem}\label{maintheorem}
Let $\alpha\in\left(\frac{1}{3},2\right]$ and $\tau>0$ be fixed. For every $c>\frac{1}{2}$, there exists an even and periodic single-lobe profile $\phi\in H_{per}^{\alpha}(\mathbb{T})$ which is solution of the constrained minimization problem,
\begin{equation}\label{minp}
	\inf_{u\in \Hper}\left\{\int_{-\pi}^{\pi}c(D^{\frac{\alpha}{2}}u)^2+(c-1)u^2dx;\ \int_{-\pi}^{\pi}u^3dx=\tau,\ \int_{-\pi}^{\pi}udx=0\right\},
\end{equation}
and a solution of $(\ref{ode2})$. If $\ker\left(\mathcal{L}|_{X_0}\right)=[\phi']$ for all $c>\frac{1}{2}$, we obtain a smooth curve $c\in\left(\frac{1}{2},+\infty\right)\mapsto\phi\in H_{per}^{\alpha}(\mathbb{T})\cap X_0$ of even periodic waves with fixed period and furthermore, denoting $d:=1+2A(c)-c-cA'(c)$, we have,\\

\noindent i) for the linearized operator $\mathcal{L}$: a simple negative eigenvalue if $d<0$ and two negative eigenvalues if $d\geq0$,\\
ii) zero is a simple eigenvalue of $\mathcal{L}$ if and only if $d\neq0$, \\
iii) $\phi$ is the unique solution of the problem $(\ref{minp})$ for the case $\alpha\in\left(\frac{1}{2},2\right],$\\
iv) the periodic wave $\phi$ is spectrally stable if:
\begin{itemize}
	\item $d\neq0$ and $A'(c)+ \frac{1}{\pi}\mathcal{B}_c(\phi)>0$,
	\item $\alpha\in\left(\frac{1}{2},2\right]$ and $d\neq0$ for all $c>\frac{1}{2}$,
\end{itemize}
v) the periodic wave $\phi$ is orbitally stable if $\alpha\in\left(1,2\right]$ and $d\neq0$ for all $c>\frac{1}{2}$.

\end{theorem}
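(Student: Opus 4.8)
The plan is to obtain the profile by the direct method applied to the constrained problem $(\ref{minp})$. First I would record the Poincar\'e-type bound $\|u\|_{L^2}^2\le\|D^{\frac{\alpha}{2}}u\|_{L^2}^2$ valid on $\Hper\cap X_0$, since every nonzero frequency satisfies $|\xi|\ge1$; consequently the quadratic functional is bounded below by $(2c-1)\|D^{\frac{\alpha}{2}}u\|_{L^2}^2$, which is coercive exactly when $c>\tfrac12$ — this is the origin of the speed threshold. The cubic constraint $\int u^3=\tau$ is weakly continuous because $\Hper\hookrightarrow L^3$ compactly precisely for $\alpha>\tfrac13$, which is the origin of the lower restriction on $\alpha$; together with weak lower semicontinuity of the quadratic part, a minimizing sequence converges (after extraction) to a nontrivial minimizer $\phi$. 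Evenness and the single-lobe shape I would extract from a symmetric-decreasing rearrangement argument exploiting the positivity of the periodic convolution kernel of $(cD^{\alpha}+c-1)^{-1}$ on $X_0$, and the Euler--Lagrange equation, after fixing the Lagrange multiplier by scaling, reduces to $(\ref{ode2})$. Finally, under the hypothesis $\ker(\mathcal{L}|_{X_0})=[\phi']$ the linearization of $(\ref{ode2})$ is invertible on the even sector, so the implicit function theorem yields the smooth curve $c\mapsto\phi$ of fixed period $2\pi$.

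For the spectral count in (i)--(ii) I would proceed in two stages. On $X_0$, the second-order optimality condition for a single constraint gives $\langle\mathcal{L}|_{X_0}v,v\rangle\ge0$ on the codimension-one space $\{v\in X_0:\langle\phi^2,v\rangle=0\}$, so $n(\mathcal{L}|_{X_0})\le1$; since a direct computation using $(\ref{ode2})$ gives $\langle\mathcal{L}\phi,\phi\rangle=-\tfrac{\tau}{2}<0$, the operator is not positive and $n(\mathcal{L}|_{X_0})=1$, the kernel being one-dimensional by hypothesis. To pass from $\mathcal{L}|_{X_0}$ on $X_0$ to $\mathcal{L}$ on $L^2_{per}(\mathbb{T})$ I would use the splitting $L^2_{per}(\mathbb{T})=X_0\oplus[1]$ together with the rank-one relation defining $\mathcal{L}|_{X_0}$, and analyze the scalar Schur complement in the constant direction, where $\mathcal{L}1=(c-1)-\phi$ and $\langle\mathcal{L}1,1\rangle=2\pi(c-1)$. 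Differentiating $(\ref{ode2})$ in $c$ expresses the required $\mathcal{L}^{-1}$-pairings through $A(c)$ and $A'(c)$, and the sign of that Schur complement turns out to be the sign of $d=1+2A(c)-c-cA'(c)$. This yields $n(\mathcal{L})=1$ when $d<0$ and $n(\mathcal{L})=2$ when $d\ge0$, and shows the extra direction enters the kernel precisely when $d=0$, i.e.\ zero is simple if and only if $d\neq0$.

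For uniqueness (iii) I would argue that any even single-lobe solution with the same $(c,\tau)$ again realizes the infimum in $(\ref{minp})$, and then invoke a strict rearrangement inequality: for $\alpha>\tfrac12$ the Green kernel of $(cD^{\alpha}+c-1)^{-1}$ on $X_0$ is strictly symmetric-decreasing, which forces equality in the rearrangement only for translates of a single profile, hence uniqueness among even functions; the restriction $\alpha>\tfrac12$ enters exactly through this strict monotonicity. For spectral stability (iv) I would set up the Hamiltonian--Krein instability-index count for $J\mathcal{L}|_{X_0}\chi=\lambda\chi$, noting that on $X_0$ both $\partial_x$ and $1+D^{\alpha}$ are invertible, so $J$ is nonsingular and the abstract theory applies. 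With $n(\mathcal{L}|_{X_0})=1$ and $\ker=[\phi']$, the identity $k_r+2k_c+2k_i^-=n(\mathcal{L}|_{X_0})-n(D)$ reduces to the sign of the scalar quantity $D$ assembled from $\partial_c$-derivatives of the conserved quantities, which is where $A'(c)+\frac{1}{\pi}\mathcal{B}_c(\phi)$ appears; showing this is positive (automatic when $\alpha>\tfrac12$ and $d\neq0$) forces $k_r=k_c=k_i^-=0$, hence $\sigma(J\mathcal{L}|_{X_0})\subset i\mathbb{R}$.

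For orbital stability (v) I would upgrade spectral to nonlinear stability through the Lyapunov/variational characterization: coercivity of $\mathcal{L}|_{X_0}$ on the subspace orthogonal to the translation and constraint directions (guaranteed by $n(\mathcal{L}|_{X_0})=1$ and $d\neq0$) produces a conserved functional controlling the distance to the orbit, and the restriction $\alpha>1$ is used to secure local well-posedness and the continuous embedding $\Hper\hookrightarrow C(\mathbb{T})$ (valid since $\frac{\alpha}{2}>\frac12$) needed to close the estimate. The main obstacle I anticipate is concentrated in (ii)--(iii): carrying out the nonlocal eigenvalue transfer so that the determining scalar is identified \emph{exactly} with $d$ (classical Sturm--Liouville oscillation theory being unavailable for the fractional operator), and establishing the strict kernel monotonicity underlying uniqueness, which is precisely what confines (iii) to $\alpha>\tfrac12$.
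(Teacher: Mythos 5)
Your treatment of the existence part and of items (i) and (iv) follows essentially the paper's own route: direct minimization using the Poincar\'e--Wirtinger bound (origin of $c>\tfrac{1}{2}$) and the compact embedding $H^{\alpha/2}_{per}(\mathbb{T})\hookrightarrow L^3_{per}(\mathbb{T})$ for $\alpha>\tfrac{1}{3}$, rearrangement for the even single-lobe shape, and a constrained index count in which the decisive scalar is $\langle \mathcal{\tilde{L}}^{-1}1,1\rangle=-2\pi d^{-1}/c$, obtained by differentiating the profile equation in $c$. The paper organizes the count through the $2\times2$ matrices $R(0)$ and $S(0)$ from \cite[Theorem 4.1]{pel-book} rather than your two-step Schur complement in the constant direction, but the machinery is the same, and your identification of $d$ is correct. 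For (ii), however, note that the oscillation theory you flag as ``unavailable'' for the fractional operator is in fact available and is exactly what the paper uses: the Oscillation Theorem of \cite{hur} gives $z(\mathcal{\tilde{L}})\in\{1,2\}$, and $d\neq0$ yields $\{1,\phi,\phi^2\}\subset\mathrm{range}(\mathcal{\tilde{L}})$, whence simplicity of the kernel by \cite[Proposition 3.1]{hur}; so your (ii) is incomplete as written, though it is closed by a citation rather than a new idea.

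The genuine gaps are in (iii) and (v). For (iii), a rearrangement argument cannot yield uniqueness: equality analysis in the fractional P\'olya--Szeg\"{o} inequality shows that a minimizer agrees with a translate of its own symmetric-decreasing rearrangement --- i.e.\ it gives the symmetry already proved in Lemma~\ref{minlema} --- but it does not preclude two \emph{distinct} even single-lobe minimizers, and the ``strictly symmetric-decreasing Green kernel'' you invoke is neither established nor the place where $\alpha>\tfrac{1}{2}$ enters. The paper's actual proof is a Frank--Lenzmann continuation in $\alpha$: local $C^1$ branches $(\phi_\alpha,c_\alpha)$ along which $\int_{-\pi}^{\pi}\phi_\alpha^3\,dx$ is constant (Proposition~\ref{prop1-uniq}), a compactness argument extending the branch to $\alpha_*=2$ (Proposition~\ref{prop-seq-ext}, where $\alpha>\tfrac{1}{2}$ enters through the Banach-algebra property of $H^s_{per}$, $s>\tfrac{1}{2}$, and the Sobolev embeddings), and an anchoring at $\alpha=2$, where the dnoidal solution $(\ref{dnsol})$ is explicit, unique for fixed $c$, and satisfies $\frac{d}{d\omega}\int_{-\pi}^{\pi}\varphi^3dx=6\int_{-\pi}^{\pi}\left((\varphi')^2+\varphi^2\right)dx>0$; this monotonicity forces the two branches issuing from two hypothetical minimizers to collide at $\alpha=2$, contradicting local uniqueness. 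For (v), your claim that coercivity on the orthogonal of the translation and constraint directions is ``guaranteed by $n(\mathcal{L}|_{X_0})=1$ and $d\neq0$'' is false as stated: with one negative eigenvalue one must additionally verify a sign condition (classically, positivity of $\frac{d}{dc}P(\phi)$), and the paper emphasizes that this is precisely the quantity it avoids computing. Instead, the paper applies Proposition~\ref{teoest} (from \cite{CNP}) with the explicit direction $\Phi=1+\phi$, for which $\mathcal{L}\Phi=Q'(\phi)$, and verifies the required negativity $\langle\mathcal{L}(1+\phi),1+\phi\rangle\leq 2\pi(c-1)-4\pi(2c+1)A(c)\leq-2\pi c(4c-1)<0$, using the bound $A(c)>c-\tfrac{1}{2}$ in $(\ref{ineqdiff2})$, which is itself obtained by integrating the differential inequality coming from $d<0$. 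Without this verification (or an equivalent sign condition) item (v) does not follow from the spectral information alone.
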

\indent The first two parts in Theorem $\ref{maintheorem}$ concern the existence of periodic minimizers for the problem $(\ref{minp})$ and the existence of smooth curve of solutions with fixed period. To prove the first part, we need to use the Poincar\'e-Wirtinger inequality to find the bifurcation point $c=\frac{1}{2}$ and compactness tools to establish that the periodic waves exist for all $c>\frac{1}{2}$. The second part is similar to \cite[Lemma 3.8]{NPL} and it can be determined by using the crucial hypothesis $\ker\left(\mathcal{L}|_{X_0}\right)=[\phi']$ and the implicit function theorem.\\
\indent  Assumption $\ker\left(\mathcal{L}|_{X_0}\right)=[\phi']$ is also important to give precise informations about items i)-v) in Theorem $\ref{maintheorem}$. In fact, item i) is determined since we show that the our periodic waves solve the minimization problem $(\ref{minp})$. Thus, we combine this information with convenient index formulas  (see \cite[Theorem 4.1]{pel-book}) to obtain that $n(\mathcal{L})=1$, if $d<0$ and $n(\mathcal{L})=2$, if $d\geq0$. Here, $n(\mathcal{L})$ stands for the number of negative eigenvalues of $\mathcal{L}$. The value of $d$ is crucial in our analysis since it determines the existence of \textit{fold points}, that is, values of $c$ (and depending on $\alpha$) such that $d=0$. Folds points are related with the existence of additional elements in $\ker(\mathcal{L})$, besides $\phi'$, and they give the exact value of $c$ where the number of negative eigenvalues change. It is worth mentioning that they were first studied in \cite{NPL} for the case of fractional Korteweg-de Vries (fKdV) equation. \\
\indent Second item can be established since the operator $\mathcal{L}$ in $(\ref{operator})$ satisfies an \textit{Oscillation Theorem} for fractional linear operators with smooth periodic potentials according to \cite{FL}, \cite{hur} and \cite{NPL}. In fact, we obtain that $\dim(\ker(\mathcal{L}))\in\{1,2\}$ and $\dim(\ker(\mathcal{L}))=1$ if and only if $d\neq0$.\\
\indent Regarding item iii). Using the arguments in \cite{FL}, our work establishes sufficient conditions for the \textit{uniqueness of periodic minimizers} associated to the problem $(\ref{minp})$. In what follows, let $\alpha_0\in(\frac{1}{2}, 2]$ be fixed. Assume that $(\phi_{0}, c_{0})\in V\times \left(\frac{1}{2},+\infty\right)$ with $\phi_0$ being a non-zero solution of $(\ref{ode2})$ and satisfying assumption $(\ref{hyp-uniq})$ with $\alpha=\alpha_0$ and $c=c_{0}$. The implicit function theorem guarantees the existence of a $\delta>0$ and an $C^1-$map $\alpha\in I=[\alpha_0,\alpha_0+\delta) \mapsto (\phi_{\alpha},c_{\alpha})$ such that $(\phi_{\phi},c_{\alpha})$ is the unique local solution of $(\ref{ode2})$ in a convenient neighbourhood around $(\phi_0,c_0)$. In addition, it is possible to obtain for all $\alpha\in I$ that
\begin{equation}\label{L3}
	\int_{-\pi}^{\pi}\phi_{\alpha}^3dx=\int_{-\pi}^{\pi}\phi_{0}^3dx.
\end{equation}
Using compactness arguments, we prove that the maximal branch $(\phi_{\alpha},c_{\alpha})$ and equality $(\ref{L3})$ extends to the interval $[\alpha_0,2)$. Next, since it is well know that for $\alpha=2$ and a fixed $c>\frac{1}{2}$ the solution with dnoidal profile (see $(\ref{dnsol})$) is unique, we obtain the uniqueness of minimizers for the problem $(\ref{minp})$ using the similar arguments as in \cite[Theorem 2.4]{FL}. \\
\indent We determine item iv) employing again the index formula to get
$$n(\mathcal{L}|_{\{1,(D^{\alpha}+1)\phi\}^{\bot}})=n(\mathcal{L})-n_0-z_0,$$
where $n_0$ is the number of negative eigenvalues determined by the Hessian symmetric matrix formed by momentum and mass and $z_0$ indicates the dimension of the kernel of the same Hessian matrix. To simplify the analysis, $n_0$ and $z_0$ can be both precisely determined by analysing the following quantity for the case $d\neq0$ (see Section 5).
\begin{equation}\label{detS0}\det S(0):= \frac{4\pi^2d^{-1}}{c} \left(A'(c)+ \frac{1}{\pi}\mathcal{B}_c(\phi) \right).\end{equation}
 \indent To obtain the precise statements for this item, we need to discuss the existence (and spectral stability) of periodic small amplitude waves associated with the equation $(\ref{rbbm})$ for the case $\alpha\in (0,2]$. For a small amplitude parameter $a$, we put forward the explicit solutions of  $(\ref{ode2})$ as
\begin{equation}\label{exp-sol1}
	\phi(x)= a\cos(x) + \frac{a^2}{2(2^{\alpha} -1)} \cos(2x) + O(a^3).
\end{equation}
The wave speed $c$ and parameter $A$ are given respectively by
\begin{equation}\label{exp-speed1}
	c= \frac{1}{2} + \frac{a^2}{4(2^\alpha +1)(2^\alpha -1)} + O(a^4),\ \ \ \mbox{and}\ \ \
	A(c) = \frac{a^2}{4} + O(a^4).
\end{equation}
Orbital/spectral stability and related topics of small amplitude periodic waves associated to several evolution models have been exhaustively studied  (see \cite{haragus1}, \cite{HP}, \cite{johnson13}, \cite{LP}, \cite{NPL}, and references therein). In our paper, we show the existence of two fold points: $\alpha=\tilde{\alpha}\approx 0.2924$ and $\alpha=\frac{1}{2}$. We prove that $n(\mathcal{L})=1$, if $\alpha\in(0,\tilde{\alpha})\cup (\frac{1}{2},2]$ and $n(\mathcal{L})=2$, if $\alpha\in(\tilde{\alpha},\frac{1}{2})$. To determine $A'(c)$ in $(\ref{detS0})$, we can use both expressions in $(\ref{exp-speed1})$ to get
\begin{equation}\label{deriv-ci}
	A'(c)= (2^\alpha +1)(2^\alpha -1) + O(a^3).
\end{equation}
Since $A'(c)>0$ for $\alpha\in (0,2]$, it follows that $n_0$ and $z_0$ are determined only by the sign of $d$. In fact, if $d<0$, one has $n_0=1$ and $z_0=0$, while $d>0$ gives us that $n_0=2$ and $z_0=0$. In both cases, we obtain the spectral stability of the small amplitude periodic wave $\phi$ $(\ref{exp-sol1})$. The degenerate case $d=0$ is also established and the wave is said to be spectrally stable. For the case $\alpha=2$ in $(\ref{rbbm})$, our results are compatible with those ones in \cite{haragus} when $p=1$ in equation $(\ref{pBBM})$ but, in our case, the small amplitude periodic waves are near of the equilibrium solution $\phi=0$.\\
\indent Using the spectral information obtained by the small amplitude periodic waves, we establish that the number of negative eigenvalues of the linearized operator $\mathcal{L}$ is always equal to one if and only if $\dim(\ker({\mathcal{L}}))=1$ for all $c>\frac{1}{2}$. In addition, since $d<0$ for the periodic wave obtained by the problem $(\ref{minp})$, we conclude $A(c)>c-\frac{1}{2}$ and this bound allows us to deduce that $\det S(0)$ in $(\ref{detS0})$ is always negative. This fact gives us the spectral stability and moreover, it recovers the same result as in \cite{angulo1} without using scaling argument. In addition, we obtain periodic travelling wave solutions of the equation \eqref{rbbm} numerically by using a Petviashvili's iteration method. The numerical results also confirm the analytical results stating that  the periodic wave is spectrally stable for $\alpha\in\left(\frac{1}{2},2\right]$.\\
\indent Next, we discuss the case $\alpha\in (\frac{1}{3},\frac{1}{2}]$. In fact, numerical experiments will be used to decide the exact sign of the quantities $d$ and $A'(c)+ \frac{1}{\pi}\mathcal{B}_c(\phi)$ in $(\ref{detS0})$ in terms of $c>\frac{1}{2}$. Our numerical results point out that there exists a critical wave speed $c^*\in (\frac{1}{2}, +\infty)$ at which the sign of $d$ changes.
For $c\in(c^*,+\infty)$, we observe numerically that $d<0$ and  $A'(c)+ \frac{1}{\pi}\mathcal{B}_c(\phi) >0$, therefore the periodic wave is spectrally stable. On the other hand, for $c\in(\frac{1}{2}, c^*)$, we have $d>0$ and  $A'(c)+ \frac{1}{\pi}\mathcal{B}_c(\phi)>0$.  Thus $\det S(0)>0$ which implies $n_0=0$ or $n_0=2$. Since the first entry of the matrix $S(0)$ is negative, we obtain $n_0=2$ and the spectral stability of periodic wave is observed numerically  for $c\in(\frac{1}{2}, c^*)$.\\
\indent In addition to the spectral stability, we prove the orbital stability for $\alpha\in(1,2]$ by employing an adaptation of the recent arguments in \cite{CNP} and new ingredients. Indeed, in our paper we are not considering, for the orbital stability, the old method which minimizes the energy $E$ in $(\ref{quantconserved})$ with fixed momentum $P$ in $(\ref{momentum})$ and mass $M$ in $(\ref{mass})$. When this kind of approach is considered, we need to prove, besides good spectral properties for the linearized operator $\mathcal{L}$ in $(\ref{operator})$,  the positiveness  of the  Hessian matrix $\frac{d}{dc}P(\phi)$ (see \cite{ANP}, \cite{an-ban-scia1}, \cite{natali}, \cite{CNP}, \cite{grillakis1}, and references therein). As far as we can see, even in the case $\alpha=2$, when explicit solutions are known in terms of the Jacobi elliptic functions, the calculation of the derivative of $P(\phi)$ in terms of $c$ becomes a hard task (see \cite{an-ban-scia1}). Instead of this, we use the conservation law $V(u)=P(u)-E(u)$ as a constrained manifold associated to the augmented Lyapunov functional $G$ in $(\ref{lyapfunct})$. This consideration sheds new light in the stability theory since we obtain our results without using any additional information of the wave, only the basic bound $A(c)>c-\frac{1}{2}$ determined to prove the spectral stability. This fact enables us to conclude item v) in Theorem $\ref{maintheorem}$.\\
\indent Our paper is organized as follows. In Section 2, we establish local and global well posedness results for the Cauchy problem  associated to the equation $(\ref{rbbm})$. In Section 3 we show the existence of periodic minimizers for the problem $(\ref{minp})$. and a precise information about the number of negative eigenvalues of $(\ref{operator})$. Also in this section, we show the uniqueness of periodic minimizers for the problem $(\ref{minp})$ in the interval $\alpha\in\left(\frac{1}{2},2\right]$. In Section 4, we present our spectral stability result for the waves obtained in Section 3. Section 5 is devoted to the numerical investigation of the spectral stability of periodic waves. Finally, in Section 6 we give some important remarks concerning the orbital stability of periodic waves.

\section{Well-posedness results} In this section, we present a brief comment concerning the global well-posedness for the Cauchy problem associated to the equation $(\ref{rbbm})$

\begin{equation}\label{cprbbm}
\left\{\begin{array}{llll}
u_t +u_x +uu_x +(D^{\alpha}u)_t=0,\\\\
u(x,0)=u_0(x).
\end{array}\right.
\end{equation}
In the whole real line, the best result of local well posedness for the Cauchy problem $(\ref{cprbbm})$ has been determined in \cite{LPS}  for initial data  $u_0\in H^s(\mathbb{R})$, $s>\frac{3}{2}-\frac{3}{8}\alpha$
and $\alpha\in (0, 1)$. For  initial data $u_0\in H_{per}^{\alpha/2}(\mathbb{T})$ and $\alpha>1$, we are going to use fixed point arguments applied to the representation of $(\ref{cprbbm})$ in an integral form as

\begin{equation}\label{intform}
u(x,t)=u_0(x)+\int_0^t\left[\mathcal{K}\ast \left(u+\frac{u^2}{2}\right)\right](x,s)ds,
\end{equation}
where $\mathcal{K}$ is defined using the periodic Fourier transform
$$
\widehat{\mathcal{K}v}(k)=\frac{-ik}{1+|k|^{\alpha}}\widehat{v}(k).
$$

\begin{prop}\label{propwp}
	Let $\alpha>1$ be fixed. For each $s\geq\frac{\alpha}{2}$ and $u_0\in H_{per}^{s}(\mathbb{T})$, there exist $T>0$ and a unique solution $u$ of $(\ref{cprbbm})$ such that $u\in C([-T,T];H_{per}^{s}(\mathbb{T}))$. Moreover, for all $T^{\star}<T$, there exists a neighborhood $V$ of $u_0$ in $H_{per}^{s}(\mathbb{T})$ such that the data-solution map
	$$w_0\in V\subset H_{per}^{s}(\mathbb{T})\mapsto w\in C([-T^{\star},T^{\star}];H_{per}^{s}(\mathbb{T})),$$
	is continuous.
\end{prop}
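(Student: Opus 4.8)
The plan is to obtain the solution as a fixed point of the integral operator
\[
\Phi(u)(t) := u_0 + \int_0^t \mathcal{K}\ast\left(u(s) + \tfrac{1}{2}u(s)^2\right)\,ds
\]
on the Banach space $X_T := C([-T,T];\Hpers)$ with norm $\|u\|_{X_T} := \sup_{|t|\leq T}\|u(t)\|_{\Hpers}$, and then to apply the contraction mapping principle on the closed ball $B := \{u\in X_T : \|u\|_{X_T}\leq 2\|u_0\|_{\Hpers}\}$, with the existence time $T$ to be fixed at the end as a function of $\|u_0\|_{\Hpers}$ alone.

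The whole argument reduces to one linear and one bilinear estimate. First, convolution with $\mathcal{K}$ is bounded on $\Hpers$: since $\alpha>1$, the multiplier $m(k)=-ik/(1+|k|^\alpha)$ satisfies $|m(k)|\sim|k|^{1-\alpha}\to0$ as $|k|\to\infty$, so $C_\alpha:=\sup_{k\in\mathbb{Z}}|m(k)|<\infty$ and $\|\mathcal{K}\ast v\|_{\Hpers}\leq C_\alpha\|v\|_{\Hpers}$ (in fact $\mathcal{K}$ gains $\alpha-1$ derivatives, but only boundedness is needed). Second, $\Hpers$ is a Banach algebra for $s>\tfrac{1}{2}$; because $\alpha>1$ forces $s\geq\tfrac{\alpha}{2}>\tfrac{1}{2}$, we have $\|uv\|_{\Hpers}\leq C_s\|u\|_{\Hpers}\|v\|_{\Hpers}$, and in particular $\|u^2\|_{\Hpers}\leq C_s\|u\|_{\Hpers}^2$.

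Combining the two estimates, for $u\in B$ I would bound
\[
\|\Phi(u)\|_{X_T}\leq\|u_0\|_{\Hpers}+T\,C_\alpha\left(\|u\|_{X_T}+\tfrac{1}{2}C_s\|u\|_{X_T}^2\right),
\]
which stays below $2\|u_0\|_{\Hpers}$ once $T$ is small, so $\Phi(B)\subset B$. For the contraction, writing $u^2-v^2=(u+v)(u-v)$ and applying the same estimates gives
\[
\|\Phi(u)-\Phi(v)\|_{X_T}\leq T\,C_\alpha\left(1+2C_s\|u_0\|_{\Hpers}\right)\|u-v\|_{X_T},
\]
which is a strict contraction for $T$ small enough depending only on $\|u_0\|_{\Hpers}$. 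Banach's fixed point theorem then yields a unique $u\in B$ with $\Phi(u)=u$, the sought solution in $C([-T,T];\Hpers)$; uniqueness in the full space is recovered by a standard Gronwall comparison of two solutions.

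Finally, continuous dependence on any $[-T^\star,T^\star]$ with $T^\star<T$ follows by subtracting the integral equations for two data $u_0,w_0$, applying the same linear and bilinear estimates to the difference, and closing with Gronwall's inequality, which in fact delivers Lipschitz dependence of the data-to-solution map. I expect no deep obstacle here, the scheme being a routine Picard iteration; the single point demanding care is that both governing estimates rest on the standing hypothesis $\alpha>1$. It is exactly this restriction that makes the multiplier $m$ bounded on $\mathbb{Z}$ and simultaneously places the regularity index in the algebra range $s\geq\tfrac{\alpha}{2}>\tfrac{1}{2}$, and once these two facts are in hand the contraction closes immediately.
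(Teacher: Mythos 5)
Your proposal is correct and follows essentially the same route as the paper: a Picard/contraction argument for the integral operator on the ball of radius $2\|u_0\|_{\Hns}$ in $C([-T,T];\Hpers)$, resting on the boundedness of the multiplier of $\mathcal{K}$ (valid since $\alpha>1$) and the Banach algebra property of $\Hpers$ for $s\geq\frac{\alpha}{2}>\frac{1}{2}$, with the same choice of radius and smallness of $T$, and with uniqueness in the full space and continuous dependence handled by standard (Gronwall-type) arguments, exactly as the paper indicates.
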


\begin{proof}
	The proof of this result is classical because of the integral representation in $(\ref{intform})$ (for the cases $\alpha=1,2$, see \cite{an-ban-scia1}). Define, $Y=C([-T,T];H_{per}^{s}(\mathbb{T}))$ with the norm $||w||_{Y}=\sup_{t\in[-T,T]}||w(t)||_{H_{per}^{s}}$. Consider the map $\Upsilon: B_{r}\rightarrow B_r$ given by \begin{equation}\label{duh}\Upsilon (u)=\displaystyle u_0(x)+\int_0^t\mathcal{K}\ast \left(u+\frac{u^2}{2}\right)ds,\end{equation}
	where $r>0$ will be chosen later and $B_r=\{w\in Y;\  ||w||_{Y}\leq r\}$. We show that $\Upsilon$ is well defined, in the sense that $\Upsilon(u)\in B_r$ for $u\in B_r$, and $\Upsilon$ is a strict contraction.\\
	\indent In fact, consider $v\in H_{per}^{\frac{\alpha}{2}}(\mathbb{T})$. Since $\alpha>1$, we have
	\begin{equation}\label{est1}\begin{array}{llll}
	||\mathcal{K}\ast v||_{H_{per}^{s}}^2\leq \displaystyle 2\pi\sum_{k\in\mathbb{Z}}(1+k^2)^{s}|\hat{v}(k)|^2=
	||v||_{H_{per}^{s}}^2.
	\end{array}\end{equation}
	In addition, if $v\in L_{per}^2(\mathbb{T})$ we obtain the basic smoothing effect given by
	\begin{equation}\label{est2}\begin{array}{llll}
	||\mathcal{K}\ast v||_{H_{per}^{\frac{\alpha}{2}}}^2\leq \displaystyle 2\pi C_1\sum_{k\in\mathbb{Z}}|\hat{v}(k)|^2=C_1||v||_{L_{per}^{2}}^2,
	\end{array}\end{equation}	
	where $C_1>0$ does not depend on $v$.\\
	\indent On the other hand $\alpha>1$ implies that $\Hpers$ is a Banach algebra for all $s\geq\frac{\alpha}{2}$, and thus
	\begin{equation}\label{est3}
	||u^2||_{\Hns}\leq C_2 ||u||_{\Hns}^2,
	\end{equation}
	where $C_2>0$ does not depend on $u$.
	
	Let $u,v\in Y$ be fixed. Gathering $(\ref{est1})$, $(\ref{est3})$ and the definition of $\Upsilon$, we obtain
	\begin{equation}\label{est5}
	||\Upsilon u||_{Y}\leq ||u_0||_{\Hns}+T\left(||u||_{Y}+C_3||u||_{Y}^2\right),
	\end{equation}
	where $C_3>0$ is positive constant depending on $C_2>0$.\\
	\indent Next,
	\begin{equation}\label{est6}\begin{array}{lllll}
	||\mathcal{K}\ast(u^2-v^2)||_{\Hns}\leq \left(||u||_Y+||v||_{Y}\right)||u-v||_{Y}.
	\end{array}
	\end{equation}
	Therefore, by using a similar procedure as in $(\ref{est5})$ we obtain that
	\begin{equation}\label{est7}
	||\Upsilon u-\Upsilon v||_{Y}\leq T\left(1+||u||_Y+||v||_Y\right)||u-v||_Y.
	\end{equation}
	
	\indent By $(\ref{est5})$, $(\ref{est7})$ and the fact that $u,v\in Y$, we have
	
	\begin{equation}\label{est8}
	||\Upsilon u||_{Y}\leq ||u_0||_{\Hns}+T\left(r+C_3r^2\right)\ \mbox{and}\  ||\Upsilon u-\Upsilon v||_{Y}\leq T\left(1+2r\right)||u-v||_Y.
	\end{equation}
	Using $(\ref{est8})$, we can choose $r=2||u_0||_{\Hns}$ and $T=\frac{1}{2(1+C_4r)}$, where $C_4=\max \{2,C_3\}>0$, to obtain
	
	\begin{equation}\label{est9}
	||\Upsilon u||_{Y}\leq r\ \mbox{and}\  ||\Upsilon u-\Upsilon v||_{Y}\leq \frac{1}{2}||u-v||_Y.
	\end{equation}
	Inequalities in $(\ref{est9})$ give us that $\Upsilon:B_r\rightarrow B_r$ is well defined and a strict contraction. From the Banach Fixed Point Theorem, we obtain the existence of a unique $u\in B_r$ such that $\Upsilon u(t)=u(t)$ for all $t\in [-T,T]$. The uniqueness in the whole space and the continuous dependence are determined by a direct application of standard arguments.
\end{proof}

\begin{prop}\label{conserved1}
	Let $\alpha>1$ be fixed. The quantities $E$, $P$ and $M$ defined in $(\ref{quantconserved})$, $(\ref{momentum})$ and $(\ref{mass})$, respectively  are conservation laws.
\end{prop}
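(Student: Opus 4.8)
The plan is to verify the three conservation laws first for sufficiently regular solutions, where every formal manipulation is legitimate, and then to remove the regularity assumption by a density argument based on the continuous dependence established in Proposition~\ref{propwp}.

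For the regular case I would fix $u_0\in H^s_{per}(\mathbb{T})$ with $s$ large and let $u\in C([-T,T];H^s_{per}(\mathbb{T}))$ be the corresponding solution. The key preliminary observation is that, because $\alpha>1$, the multiplier $\mathcal{K}$ in $(\ref{intform})$ is \emph{smoothing of order} $\alpha-1$ (its symbol $-ik/(1+|k|^{\alpha})$ decays like $|k|^{1-\alpha}$), exactly as exploited in $(\ref{est2})$. Writing the equation as $u_t=\mathcal{K}\ast(u+\tfrac12 u^2)$ and using that $H^s_{per}(\mathbb{T})$ is a Banach algebra for $s\ge\frac{\alpha}{2}>\frac12$, one obtains $u_t\in C([-T,T];H^{s+\alpha-1}_{per}(\mathbb{T}))$; in particular $u_t$ is regular enough that $t\mapsto E(u(t)),P(u(t)),M(u(t))$ are $C^1$ and may be differentiated under the integral sign.

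Then I would compute each derivative directly. For $M$, integrating $(\ref{rbbm})$ over $\mathbb{T}$ annihilates $u_x$ and $uu_x$ by periodicity, while $\int_{-\pi}^{\pi}(D^{\alpha}u)_t\,dx=\frac{d}{dt}\,2\pi\,\widehat{D^{\alpha}u}(0)=0$ since $|0|^{\alpha}=0$; hence $\frac{d}{dt}M(u)=0$. For $P$, I multiply $(\ref{rbbm})$ by $u$ and integrate: the self-adjointness of $D^{\alpha/2}$ turns $\int u(D^{\alpha}u)_t$ into $\tfrac12\frac{d}{dt}\int(D^{\alpha/2}u)^2$, while $\int uu_x=\int u^2u_x=0$, giving $\frac{d}{dt}P(u)=0$. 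For $E$ the cleanest route is to record the Hamiltonian form $(1+D^{\alpha})u_t=-\partial_x(u+\tfrac12u^2)$ and to note that, with $V:=P-E$, this reads $u_t=\mathcal{J}V'(u)$ where $\mathcal{J}=-(1+D^{\alpha})^{-1}\partial_x$ is skew-adjoint; skew-adjointness yields $\frac{d}{dt}V(u)=\langle V'(u),\mathcal{J}V'(u)\rangle=0$, and combined with the conservation of $P$ this gives $\frac{d}{dt}E(u)=\frac{d}{dt}(P(u)-V(u))=0$. (Alternatively $E$ can be handled by the same multiply-and-integrate scheme, the only nontrivial cancellation being $\int D^{\alpha}u\,u_x=0$, which holds because $D^{\alpha}\partial_x$ is skew-adjoint.)

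Finally, to pass to general $u_0\in H^s_{per}(\mathbb{T})$ with $s\ge\frac{\alpha}{2}$, I would approximate $u_0$ by smooth data $u_0^n\to u_0$ in $H^s_{per}(\mathbb{T})$; the continuous dependence in Proposition~\ref{propwp} gives $u^n\to u$ in $C([-T^{\star},T^{\star}];H^s_{per}(\mathbb{T}))$. Since $\alpha>1$ forces $H^{\alpha/2}_{per}(\mathbb{T})\hookrightarrow L^{\infty}_{per}(\mathbb{T})$, the functionals $E,P,M$ are continuous on $H^s_{per}(\mathbb{T})$, so passing to the limit in $E(u^n(t))=E(u_0^n)$ (and likewise for $P,M$) yields $E(u(t))=E(u_0)$ for all $t$, and similarly for $P$ and $M$. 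I expect the only delicate point to be the bookkeeping in the regular case, namely securing enough time-regularity of $u_t$ to differentiate under the integral; this is precisely where the smoothing of $\mathcal{K}$ (i.e.\ the hypothesis $\alpha>1$) enters, while the algebraic cancellations themselves are routine integrations by parts.
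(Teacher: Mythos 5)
Your proposal is correct, but there is essentially nothing in the paper to compare it with: the authors dispose of this proposition in one line ("the proof of this result is standard and we skip the details"), so your write-up is exactly the standard argument they are alluding to, carried out in full. All three ingredients are sound: (i) since the symbol of $\mathcal{K}$ decays like $|k|^{1-\alpha}$, the identity $u_t=\mathcal{K}\ast\bigl(u+\tfrac12 u^2\bigr)$ together with the algebra property of $H^{s}_{per}(\mathbb{T})$ gives $u_t\in C([-T,T];H^{s+\alpha-1}_{per}(\mathbb{T}))$, which is where $\alpha>1$ enters and what legitimizes differentiating $t\mapsto E,P,M$ along regular solutions; (ii) the cancellations are the right ones --- $\int u u_x=\int u^2u_x=0$, vanishing of the zero Fourier mode of $D^{\alpha}$ for $M$, self-adjointness of $D^{\alpha/2}$ for $P$, and for $E$ the clean route through $u_t=\mathcal{J}V'(u)$ with $\mathcal{J}=-(1+D^{\alpha})^{-1}\partial_x$ skew-adjoint, which conserves $V=P-E$ and hence $E$ once $P$ is conserved; (iii) the density step works because in the proof of Proposition~\ref{propwp} the existence time $T=\frac{1}{2(1+C_4r)}$ depends only on $r=2\|u_0\|_{H^{s}_{per}}$, so the approximating solutions live on a common time interval, and $E$, $P$, $M$ are continuous on $H^{\alpha/2}_{per}(\mathbb{T})$ by the embedding into $L^{\infty}_{per}(\mathbb{T})$ for $\alpha>1$. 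The one routine detail you should make explicit in step (iii) is persistence of regularity: the solution emanating from the smooth datum $u_0^n$ must be known to stay smooth on the whole common interval (not just on the possibly shorter interval given by the local theory at high regularity), which follows from uniqueness in $H^{s}_{per}(\mathbb{T})$ together with a continuation argument; alternatively, one can note that conservation on a short time interval for arbitrary data, iterated, already yields conservation on the full interval. This is precisely the kind of bookkeeping the authors' word "standard" conceals, and your proof supplies it correctly.
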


\begin{proof}
The proof of this result is standard and we skip the details.

\end{proof}

\indent Propositions $\ref{propwp}$ and $\ref{conserved1}$ give us the following result.

\begin{proposition}\label{teogwp}
	Let $\alpha>1$ be fixed. For each $u_0\in \Hn(\mathbb{T})$, the Cauchy problem $(\ref{cprbbm})$ is globally well posed in $\Hn(\mathbb{T})$ with $u\in C(\mathbb{R};\Hn(\mathbb{T}))$.	
\end{proposition}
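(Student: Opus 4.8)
The plan is to promote the local solution of Proposition~\ref{propwp} to a global one by exploiting the conservation of the momentum $P$ from Proposition~\ref{conserved1}, via the classical continuation argument. First I would fix $s=\frac{\alpha}{2}$ in Proposition~\ref{propwp}, so that for each $u_0\in\Hper$ there is a unique local solution $u\in C([-T,T];\Hper)$, together with uniqueness and continuous dependence on each compact time interval. The essential feature to record is that the existence time produced by the fixed point scheme is $T=\frac{1}{2(1+C_4 r)}$ with $r=2\|u_0\|_{\Hn}$; hence $T$ depends on the datum only through $\|u_0\|_{\Hn}$, and the same lower bound on the existence time is available whenever we restart the problem from any later time, as long as the $\Hn$-norm of the new datum remains controlled.

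The crucial step is the a priori bound. Writing $P$ in Fourier variables,
\begin{equation*}
2P(u)=2\pi\sum_{k\in\mathbb{Z}}\big(1+|k|^{\alpha}\big)|\widehat{u}(k)|^2 ,
\end{equation*}
and since $1+|k|^{\alpha}$ is comparable to $(1+k^2)^{\alpha/2}$ uniformly in $k\in\mathbb{Z}$, there are constants $c_1,c_2>0$, independent of $u$, such that $c_1\|u\|_{\Hn}^2\le 2P(u)\le c_2\|u\|_{\Hn}^2$. Because $P$ is a conservation law for $\alpha>1$ by Proposition~\ref{conserved1}, we obtain, for every $t$ in the interval of existence,
\begin{equation*}
c_1\|u(t)\|_{\Hn}^2\le 2P(u(t))=2P(u_0)\le c_2\|u_0\|_{\Hn}^2 ,
\end{equation*}
so that $\|u(t)\|_{\Hn}^2\le (c_2/c_1)\|u_0\|_{\Hn}^2$ stays bounded by a constant depending only on $u_0$ and independent of $t$. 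It is worth stressing that here one uses $P$ rather than the energy $E$: the latter contains the sign-indefinite cubic term $\frac{1}{3}\int u^3\,dx$ and does not by itself control the norm, whereas $P$ is coercive on $\Hper$, which is exactly what makes the argument work.

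Finally I would run the continuation step. Suppose the maximal forward existence time $T_{\max}$ were finite. Since the local existence time depends only on the norm of the datum, and by the previous step that norm is bounded uniformly on $[0,T_{\max})$, one may restart the solution repeatedly, each time gaining a fixed time increment bounded below by a positive constant, which forces the solution to extend past $T_{\max}$; equivalently, the blow-up alternative $\|u(t)\|_{\Hn}\to\infty$ as $t\uparrow T_{\max}$ is contradicted by the uniform bound coming from the conservation of $P$. The same reasoning applies backward in time, so the solution extends to all of $\mathbb{R}$ with $u\in C(\mathbb{R};\Hper)$, and uniqueness and continuous dependence carry over by gluing the local statements. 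The only genuinely delicate point is the validity of the conservation identity for solutions that live merely in $\Hper$; but this is precisely the content of Proposition~\ref{conserved1}, so no additional regularization or density argument is required.
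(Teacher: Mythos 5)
Your proof is correct and follows exactly the route the paper intends: the paper's own proof of this proposition is just the one-line observation that Propositions \ref{propwp} and \ref{conserved1} combine to give the result, and your argument supplies precisely the standard details behind that — the local existence time from the fixed-point scheme depends only on $\|u_0\|_{\Hn}$, the conserved momentum $P$ is equivalent to the square of the $\Hn$-norm (unlike $E$, whose cubic term is sign-indefinite), and the continuation/blow-up alternative then yields a global solution. Nothing to correct.
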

\begin{flushright}
	$\square$
\end{flushright}

\section{Existence and Uniqueness of Minimizers - Spectral Properties.}
\subsection{Existence of Periodic Minimizers} In this subsection, we first give a sufficient condition for the existence of periodic waves for the equation \eqref{ode1} by showing the existence of minimizers associated to a convenient variational problem. After that, we use the minimizers to obtain good spectral properties for the linearized operator $\mathcal{L}$ in $(\ref{operator})$. \\
\indent Before starting, we need an important definition which characterizes the solutions of the equation $(\ref{ode2})$.

\begin{definition}\label{defiSL}
	We say that the periodic traveling wave solutions satisfying the equation $(\ref{ode2})$ has a single-lobe profile $\phi$ if there exist only one maximum and minimum of $\phi$ on $\mathbb{T}$. Without the loss of generality, the maximum of $\phi$ is placed at $x=0$.
	
\end{definition}

For a fixed $\tau>0$, let us consider the set
\begin{equation}\displaystyle	Y_{0} = \left\{ u \in  H^{\frac{\alpha}{2}}_{per}(\mathbb{T}); \int_{-\pi}^{\pi}u^3 =\tau,  \int_{-\pi}^{\pi} u =0 \right\}.\label{Ycond} \end{equation}
Our goal is to find a minimizer of the constrained minimization problem
\begin{equation}\label{infB}
\displaystyle q_c= \inf_{u \in Y_{0}} \mathcal{B}_c(u),
\end{equation}
where
\begin{equation}\label{Bfunctional}
\displaystyle \mathcal{B}_c(u) = \frac{1}{2}\int_{-\pi}^{\pi}c(D^{\frac{\alpha}{2}}u)^2 + (c -1)u^2dx.
\end{equation}
\indent The next lemma establishes the first part of Theorem $\ref{maintheorem}$.
\begin{lemma}\label{minlema}
	Let $\alpha> 1/3$ and $c>\frac{1}{2}$ be fixed. The minimization problem \eqref{infB} has at least one solution, that is, there exists a $\phi \in Y_{0}$ satisfying
	\begin{equation}
	\displaystyle \mathcal{B}_c(\phi)= \inf_{u \in Y_{0}} \mathcal{B}_c(u).
	\end{equation}\label{minB}
	If $\alpha\in (\frac{1}{3},2]$, the periodic minimizer of $(\ref{minB})$ has an even, single-lobe profile in the sense of Definition $\ref{defiSL}$.
\end{lemma}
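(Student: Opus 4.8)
The plan is to obtain existence by the direct method of the calculus of variations and the qualitative profile by symmetric rearrangement, so I would split the argument into these two parts.

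For existence, note first that $Y_0$ is nonempty, since any zero-mean profile with non-vanishing cubic integral can be rescaled to satisfy $\int_{-\pi}^\pi u^3 = \tau$ while keeping $\int_{-\pi}^\pi u = 0$; hence $q_c$ is finite once $\mathcal{B}_c$ is bounded below. The crux here is coercivity of $\mathcal{B}_c$ on the zero-mean subspace for every $c>\tfrac12$. Because the constraint $\int_{-\pi}^\pi u = 0$ annihilates the zeroth Fourier coefficient, the lowest surviving frequency is $|k|=1$ and the Poincar\'e--Wirtinger-type bound $\|D^{\alpha/2}u\|_{L^2}^2 = \sum_{k\neq 0}|k|^\alpha|\widehat u(k)|^2 \ge \sum_{k\neq 0}|\widehat u(k)|^2 = \|u\|_{L^2}^2$ holds. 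For $c\ge 1$ both terms of $\mathcal{B}_c$ are nonnegative; for $\tfrac12 < c < 1$ I would write $c\|D^{\alpha/2}u\|_{L^2}^2 = (c-\varepsilon)\|D^{\alpha/2}u\|_{L^2}^2 + \varepsilon\|D^{\alpha/2}u\|_{L^2}^2$ and apply the above inequality to the last term, obtaining $\mathcal{B}_c(u) \ge \tfrac12(c-\varepsilon)\|D^{\alpha/2}u\|_{L^2}^2 + \tfrac12(\varepsilon + c - 1)\|u\|_{L^2}^2$; a choice $\varepsilon \in (1-c, c)$ is available exactly because $c > \tfrac12$, and it renders both coefficients positive. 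This is precisely why $c=\tfrac12$ is the bifurcation threshold. Coercivity forces any minimizing sequence $(u_n)\subset Y_0$ to be bounded in $\Hper$, so after passing to a subsequence $u_n \rightharpoonup \phi$ weakly.

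The hypothesis $\alpha > \tfrac13$ enters when closing the constraints. The one-dimensional embedding $\Hper \hookrightarrow L^3_{per}(\mathbb{T})$ is compact precisely when $\tfrac{\alpha}{2} > \tfrac16$, i.e. $\alpha > \tfrac13$, so along the subsequence $u_n \to \phi$ strongly in $L^3_{per}(\mathbb{T})$ (hence also in $L^2_{per}$ and $L^1_{per}$). Consequently $\int_{-\pi}^\pi \phi^3 = \lim_n \int_{-\pi}^\pi u_n^3 = \tau$ and $\int_{-\pi}^\pi \phi = 0$, so $\phi \in Y_0$. Since the kinetic term is weakly lower semicontinuous while the potential term converges under the strong $L^2$-convergence, I obtain $\mathcal{B}_c(\phi) \le \liminf_n \mathcal{B}_c(u_n) = q_c$; as $\phi \in Y_0$ the reverse inequality is automatic, so $\phi$ is a minimizer.

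For the even single-lobe profile I would use the periodic symmetric decreasing rearrangement $u \mapsto u^*$, the unique function equimeasurable with $u$ that is even and non-increasing on $[0,\pi]$. Equimeasurability yields $\int_{-\pi}^\pi F(u^*) = \int_{-\pi}^\pi F(u)$ for all $F$; taking $F(t)=t,\,t^2,\,t^3$ shows that $u^*$ preserves $\int u$, $\int u^3$ and the potential term $\int u^2$, so $\phi^* \in Y_0$. The decisive ingredient, and the main obstacle, is the fractional P\'olya--Szeg\H{o} inequality $\|D^{\alpha/2}u^*\|_{L^2}^2 \le \|D^{\alpha/2}u\|_{L^2}^2$ on the torus for \emph{sign-changing} $u$. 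I would deduce it from the heat-semigroup representation: for $\alpha\in(0,2]$ the kernel $p_t$ of $e^{-tD^{\alpha}}$ on $\mathbb{T}$ is symmetric decreasing (the periodization of a unimodal stable density), so writing $\langle u, e^{-tD^{\alpha}}u\rangle = \iint u(x)\,p_t(x-y)\,u(y)\,dx\,dy$, the Riesz rearrangement inequality for the signed rearrangement gives $\langle u,e^{-tD^{\alpha}}u\rangle \le \langle u^*, e^{-tD^{\alpha}}u^*\rangle$; combined with $\|u\|_{L^2}=\|u^*\|_{L^2}$ and the identity $\|D^{\alpha/2}u\|_{L^2}^2=\lim_{t\to0^+}\tfrac1t\bigl(\|u\|_{L^2}^2-\langle u,e^{-tD^{\alpha}}u\rangle\bigr)$, letting $t\to0^+$ produces the seminorm inequality. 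Granting it, $\phi^*$ is a competitor with $\mathcal{B}_c(\phi^*)\le \mathcal{B}_c(\phi)=q_c$, hence itself a minimizer that is even and non-increasing on $[0,\pi]$. Finally, to upgrade ``non-increasing'' to the strict single-lobe profile of Definition \ref{defiSL}, I would invoke the Euler--Lagrange equation $cD^{\alpha}\phi^* + (c-1)\phi^* = a(\phi^*)^2 + b$ for Lagrange multipliers $a,b$: the smoothing of this equation propagates regularity, and together with a strong-maximum-principle/analyticity argument this rules out plateaus, leaving a single maximum at $x=0$ and a single minimum at $x=\pm\pi$.
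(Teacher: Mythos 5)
Your proposal follows essentially the same two-step route as the paper: the direct method (coercivity on the zero-mean class via Poincar\'e--Wirtinger --- your explicit $\varepsilon$-splitting is just an unpacking of the paper's ``Poincar\'e--Wirtinger plus Garding inequality'' remark --- boundedness of a minimizing sequence, the compact embedding $H^{\alpha/2}_{per}(\mathbb{T})\hookrightarrow L^3_{per}(\mathbb{T})$ for $\alpha>\frac13$ to pass both constraints to the limit, and weak lower semicontinuity), followed by the signed symmetric-decreasing rearrangement and the fractional P\'olya--Szeg\H{o} inequality to produce an even minimizer that decreases away from its maximum.

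The one genuine divergence is that the paper discharges the rearrangement step by citation (to Claasen--Johnson, Hur--Johnson and Natali--Le--Pelinovsky), while you sketch a self-contained proof via the semigroup $e^{-tD^{\alpha}}$ and Riesz rearrangement. The sketch is the standard strategy, but as written it contains an unjustified claim: the kernel of $e^{-tD^{\alpha}}$ on $\mathbb{T}$ is \emph{not} symmetric decreasing merely because it is ``the periodization of a unimodal stable density''; periodization does not preserve unimodality in general. The known repair is subordination: for $\alpha\le 2$ the map $x\mapsto e^{-x^{\alpha/2}}$ is completely monotone, so by Bernstein's theorem $e^{-t|k|^{\alpha}}=\int_0^{\infty}e^{-sk^{2}}\,d\mu_t(s)$, and hence the periodic fractional kernel is a mixture of periodic Gaussian (theta) kernels, each of which is classically known to be decreasing on $[0,\pi]$. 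Likewise, the Riesz rearrangement inequality on the circle is stated for nonnegative functions, so for sign-changing $u$ one needs the constant-shift trick: apply it to $u+C\ge 0$, using $(u+C)^{*}=u^{*}+C$ and the fact that the cross terms are rearrangement-invariant. With these two repairs (or simply by citing the same references the paper uses) your argument closes. Your final maximum-principle remark excluding plateaus addresses a point the paper passes over silently, so it is a mild strengthening rather than a defect.
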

\begin{proof}
	Let $c>\frac{1}{2}$ be fixed. By Poincar\'e-Wirtinger inequality, we see that
\begin{equation}\label{poinc}\displaystyle 2\mathcal{B}_c(u) =\int_{-\pi}^{\pi}c(D^{\frac{\alpha}{2}}u)^2 + (c -1)u^2dx\geq (2c-1)\int_{-\pi}^{\pi}u^2dx\geq0.
\end{equation}	
Combining $(\ref{poinc})$ with the Garding inequality, we obtain that $\mathcal{B}_c$ is an equivalent norm in $H_{per}^{\frac{\alpha}{2}}(\mathbb{T})$ yielding
	$q_c \geq 0$. Since $\mathcal{B}_c$ is a smooth functional in $H_{per}^{\frac{\alpha}{2}}(\mathbb{T})$, let $\{u_n\}$ be a minimizing sequence for \eqref{infB}, that is, a sequence in $Y_{0}$ satisfying
	$$\displaystyle \mathcal{B}_c(u_n) \rightarrow \inf_{u \in 	Y_{0}}\mathcal{B}_c(u), \ \ \textrm{as} \ n \rightarrow +\infty.$$
	There exist positive constants $M_0$ and $M_1$ depending on $c$ satisfying
	$$M_0 ||u_n||_{H_{per}^{\frac{\alpha}{2}}}^2 \leq \mathcal{B}_c(u_n) \leq M_1 || u_n||_{H_{per}^{\frac{\alpha}{2}}}^2, \ \ \forall n \in \mathbb{N},$$
	that is, $\{u_n\}$ is bounded in $H_{per}^{\frac{\alpha}{2}}(\mathbb{T})$. Thus, there is $\phi \in H_{per}^{\frac{\alpha}{2}}(\mathbb{T})$ such that, up to a subsequence, $$u_n \rightharpoonup \phi \ \textrm{weakly in}  \ H_{per}^{\frac{\alpha}{2}}(\mathbb{T}), \ \textrm{as} \ n \rightarrow +\infty.$$
	
	On other hand, since $\alpha> 1/3$ one sees that the energy space $H_{per}^{\frac{\alpha}{2}}(\mathbb{T})$ is compactly embedded in $L_{per}^3(\mathbb{T})$ (see \cite[Theorem 4.2]{amb}), and
	$u_n \rightarrow \phi \ \textrm{in} \ L_{per}^3(\mathbb{T}), \ \textrm{as} \ n \rightarrow +\infty.$
	In addition, using the fact
	\begin{eqnarray*}
		\displaystyle \left|\int_{-\pi}^{\pi}(u_n^3 -\phi^3)dx \right|  \leq ||u_n - \phi ||_{L_{per}^3} + 3||u_n -\phi ||_{L_{per}^3}||\phi||_{L_{per}^3}|| u_n||_{L_{per}^3},
	\end{eqnarray*}
	one has that $\int_{-\pi}^{\pi}\phi^3 dx = \tau$. A similar argument as above and using the fact that
	$H_{per}^{\frac{\alpha}{2}}(\mathbb{T})$ is compactly embedded in $L_{per}^1(\mathbb{T})$, give us $\int_{-\pi}^{\pi} \phi \ dx = 0.$
	
	Moreover, since the weak lower semi-continuity of $\mathcal{B}_c$, we have
	$\displaystyle \mathcal{B}_c(\phi) \leq  \liminf_{n \rightarrow +\infty} \mathcal{B}_c(u_n) = q_c.$\\
\indent The   symmetric rearrangements $\phi^{\#}\in Y_0$ associated to the solution $\phi$\footnote{We can take $\phi^{\#}$ as a symmetric rearrangement in $Y_0$ because it leaves $\int_{-\pi}^{\pi}u^3dx$ and $\int_{-\pi}^{\pi}udx$ invariant (see \cite[Proposition 2.1]{hur})} leave the $L_{per}^2-$norm of $\phi^{\#}$ and $\phi$ invariants and $\int_{-\pi}^{\pi}(D^{\alpha/2}\phi^{\#})^2dx$ does not increase in comparison with $\int_{-\pi}^{\pi}(D^{\alpha/2}\phi)^2dx$ thanks to the  fractional Polya--Szeg\"{o} inequality (for further details and similar applications, see \cite[Lemma A.1]{CJ2019}, \cite[Proposition 2.1]{hur} and \cite[Theorem 2.1]{NPL}). Therefore, for $c>\frac{1}{2}$ we have $\mathcal{B}_c(\phi^{\#})=q_c$. Invoking the original notation $\phi$ instead of $\phi^{\#}$ to simplify the comprehension of the reader, we see that the minimizer $\phi \in Y_0$ of $\mathcal{B}_c(u)$ must decrease away symmetrically from
the maximum point. Using the translational invariance, the maximum point can be placed at $x = 0$,
which yields an even single-lobe profile for $\phi$.
\end{proof}

From Lemma \eqref{minlema} and Lagranges's Multiplier Theorem, there exists $C_1$ and $C_2$ such that
\begin{equation}\label{lagrange}
\displaystyle cD^{\alpha}\phi +(c -1)\phi = C_1\phi^2 +C_2.
\end{equation}

We see that $\phi$ is a nontrivial single-lobe because $\phi\in Y_0$. Since $\int_{-\pi}^{\pi}\phi dx =0$, we deduce from \eqref{lagrange} that $C_2=-\frac{C_1}{2\pi}\int_{-\pi}^{\pi}\phi^2dx$. In addition, multiplying equation $(\ref{lagrange})$ by $\phi$ and integrating the result over $[0,L]$, we obtain by the Poincar\'e-Wirtinger inequality, the fact $c>\frac{1}{2}$ and since $\tau>0$ that $C_1>0$. By the homogeneity of $C_1\left(\phi^2-\frac{1}{2\pi}\int_{-\pi}^{\pi}\phi^2dx\right)$, we see that $C_1$ can be chosen as $C_1=\frac{1}{2}$. Indeed, for all $s>0$, we obtain
\begin{equation}\label{minsB}\begin{array}{lllll}
	\mathcal{B}_c(s\phi)&=&\displaystyle s^2\mathcal{B}_c(\phi)\\\\
	&=&\displaystyle\inf\left\{\mathcal{B}_c(su);\ u\in H_{per}^{\frac{\alpha}{2}}(\mathbb{T}),\ \ \int_{-\pi}^{\pi}u^3dx=\tau,\ \ \ \int_{-\pi}^{\pi}udx=0\right\}\\\\
	&=&\displaystyle\inf\left\{\mathcal{B}_c(u);\ u\in H_{per}^{\frac{\alpha}{2}}(\mathbb{T}),\ \ \int_{-\pi}^{\pi}u^3dx=s^3\tau,\ \ \ \int_{-\pi}^{\pi}udx=0\right\}.
\end{array}\end{equation}
Thus, if $\phi$ solves $(\ref{minB})$ we have that $\psi=s\phi$ is a solution of the minimization problem $(\ref{minsB})$. As above and by Lagrange's Multiplier Theorem, there exists $C_3>0$ and $C_4=-\frac{C_3}{2\pi}\int_{-\pi}^{\pi}\psi^2dx$ such that
\begin{equation}\label{lagrange1}
	s\left(\displaystyle cD^{\alpha}\phi +(c -1)\phi\right) = s^2C_3\left(\phi^2 -\frac{1}{2\pi}\int_{-\pi}^{\pi}\phi^2dx\right).
\end{equation}
Since $s>0$ is arbitrary and $\phi$ is non-trivial, we can choose $s=\frac{1}{2C_3}$ to obtain that $\phi$ solves the equation
\begin{equation}\label{ode23}
	cD^{\alpha}\phi +(c -1)\phi - \frac{1}{2}\phi^2 +A =0,
\end{equation}
so that $C_1$ can be chosen as $C_1=\frac{1}{2}$. Moreover, by a standard bootstrap argument (see \cite{CNP}), we obtain that $\phi$ is smooth.\\

\begin{remark}
	Let $\alpha\in \left(\frac{1}{3},2\right]$ and $L>0$ be fixed. The periodic wave $\phi$ obtained in Lemma $\ref{minlema}$ can be considered with a general period $L$ instead of the normalized period $L=2\pi$. In the general case, the wave speed needs to satisfy (by using Poincar\'e-Wirtinger inequality for general periods) the basic bound $c>\frac{1}{1+\left(\frac{2\pi}{L}\right)^{\alpha}}$ for the existence of periodic waves in the space $X_{0,L}=\{f\in L_{per}^2(\mathbb{T}_L);\ \int_{-L/2}^{L/2}fdx=0\}$, where $\mathbb{T}_{L}=[-L/2,L/2]$. All results in this section, Sections 4 and 6 can be obtained with a general period by replacing the normalized bifurcation point $c=\frac{1}{2}$ by  $c=\frac{1}{1+\left(\frac{2\pi}{L}\right)^{\alpha}}$ and slight modifications in the arguments when necessary. In Section 5, where we present our numerical experiments, it makes necessary to fix the period in order to obtain the plots which give us, for example, the behaviour of $\phi$, $d$, $A'(c)$ and etc. However, the restriction to consider normalized periodic solutions does not affect the generality of the proposed results in a general context.
\end{remark}
\indent Next, let us denote $\mathcal{\tilde{L}}:=\frac{1}{c}\mathcal{L}=D^{\alpha}+ (1 - \frac{1}{c}) - \frac{1}{c}\phi$
the linearized and self-adjoint operator around the periodic wave $\phi$ and suppose that $\ker(\mathcal{\tilde{L}}|_{X_0})=[\phi']$. Thus, a direct application of the implicit function theorem as in \cite[Lemma 3.8]{NPL} gives us that $c\in (\frac{1}{2},+\infty)\mapsto\phi\in H_{per}^{\infty}(\mathbb{T})$ defines a smooth curve of periodic waves all of them with the same period $2\pi$. Thus, one has that $\frac{d}{dc}\phi\in D(\tilde{\mathcal{L}})=H_{per}^{\alpha}(\mathbb{T})$,
\begin{equation}\label{eq15}
\displaystyle \mathcal{\tilde{L}}\left(\frac{d}{dc}\phi\right) = \frac{1}{c^2}\left(A(c)- \phi - \frac{1}{2}\phi^2 - cA'(c) \right), \ \ \ \mathcal{\tilde{L}}\left(\frac{1}{c}\right)= \frac{1}{c^2} \left(c-1- \phi \right)
\end{equation}
and
\begin{equation}\label{eq16}
\displaystyle \mathcal{\tilde{L}}\left( \frac{1}{c}\phi\right) = -\frac{1}{c^2}\left(\frac{1}{2}\phi^2+ A(c)\right).
\end{equation}
Equations in \eqref{eq15} and \eqref{eq16} give us an important relation
\begin{equation}\label{vd1}
c^2\mathcal{\tilde{L}}\left(\frac{d}{dc} \phi - \frac{1}{c}-\frac{1}{c}\phi\right) =\left(1 +2A(c) -c -cA'(c)\right)=d.
\end{equation}

Since $\mathcal{\tilde{L}}\phi= -\frac{1}{c}\left(\frac{1}{2}\phi^2 + A(c) \right)$, we have $\langle \mathcal{\tilde{L}}\phi, \phi \rangle <0$ and we deduce that $\mathcal{\tilde{L}}$ has at least one negative eigenvalue. Next result establishes items i) and ii) of Theorem $\ref{maintheorem}$ by giving the precise behaviour of the first eigenvalues associated with the linearized operator $\mathcal{\tilde{L}}$.

\begin{prop}\label{propL}
	Let $\alpha\in\left(\frac{1}{3},2\right]$ be fixed and assume that $\ker(\mathcal{\tilde{L}}|_{X_0})=[\phi']$. For $c>\frac{1}{2}$ consider $d=1 +2A(c) -c -cA'(c)$. The linearized operator $\mathcal{\tilde{L}}= D^{\alpha}+ (1 - \frac{1}{c}) - \frac{1}{c}\phi$ around the periodic solution $\phi$ satisfies
	\begin{equation}\label{propr1}
	\displaystyle n(\mathcal{\tilde{L}}) = \left\{ \begin{array}{ll} 1, &
	\textrm{if} \  d < 0,\\
	2, & \textrm{if} \ d\geq0
	\end{array}\right.
	\end{equation}
	and
		\begin{equation}\label{propr2}
	\displaystyle z(\mathcal{\tilde{L}}) = \left\{ \begin{array}{ll} 1, &
	\textrm{if} \  d \neq 0,\\
	2, & \textrm{if} \ d=0.
	\end{array}\right.
	\end{equation}
\end{prop}
\begin{proof}
	In view of \eqref{infB}, since $\phi$ locally minimizes $\mathcal{B}_c$, we have
	$
	\mathcal{\tilde{L}}|_{\{1, \phi^2\}^\bot} \geq 0,
	$
	that is, $n(\mathcal{\tilde{L}}|_{\{1, \phi^2\}^\bot}) = 0$ and $n(\mathcal{\tilde{L}})\in\{1,2\}$, since $\langle\mathcal{\tilde{L}}\phi,\phi\rangle=-2\mathcal{B}_c(\phi)<0$.
	
	For $\lambda\notin \sigma(\mathcal{\tilde{L}})$, let $R(\lambda)$ be the following matrix given by
	\begin{equation} R(\lambda) :=
		\left[
		\begin{array}{c c }
			\langle (\mathcal{\tilde{L}}-\lambda)^{-1}1, 1 \rangle &  \langle (\mathcal{\tilde{L}}-\lambda)^{-1}\phi^2, 1 \rangle \\
			& \\
			\langle (\mathcal{\tilde{L}}-\lambda)^{-1}\phi^2, 1 \rangle & \langle (\mathcal{\tilde{L}}-\lambda)^{-1}\phi^2, \phi^2 \rangle
		\end{array}\right].
	\end{equation}
For $d\neq0$, we compute $R(\lambda)$ at $\lambda=0$. In fact,

\begin{equation}\label{op1}
\langle \mathcal{\tilde{L}}^{-1}\phi^2, \phi^2 \rangle	=	\displaystyle - 4c^2 \int_{-\pi}^{\pi} (D^{\frac{\alpha}{2}}\phi)^2 dx -8\pi A(c)\left( \frac{A(c)}{c}d^{-1} + 2(c^2-c) \right)
\end{equation}

	\begin{equation}\label{op2}
	\langle \mathcal{\tilde{L}}^{-1}1, \phi^2 \rangle=\langle \mathcal{\tilde{L}}^{-1}\phi^2, 1 \rangle=\frac{4\pi A(c)}{c}d^{-1},
	\end{equation}
and 	\begin{equation}\label{third-term1}\langle\mathcal{\tilde{L}}^{-1}1, 1 \rangle=\displaystyle
	\displaystyle -\frac{2 \pi }{c}d^{-1}.
	\end{equation}
In $(\ref{op1})$, $(\ref{op2})$ and $(\ref{third-term1})$, $\mathcal{\tilde{L}}^{-1}\phi^2$ and $\mathcal{\tilde{L}}^{-1}1$ are calculated by combining \eqref{eq15} and \eqref{eq16} to obtain
	\begin{equation}\label{eqmatrix}
	\mathcal{\tilde{L}} \left(d^{-1} \left( \frac{d}{dc}\phi - \frac{1}{c} - \frac{1}{c}\phi \right) \right) = 1,
	\end{equation}
	and
	\begin{equation}\label{eqmatrix2}
	\mathcal{\tilde{L}} \left(-2A(c) \left( d^{-1}\left( \frac{d}{dc}\phi -\frac{1}{c} -\frac{1}{c}\phi\right) \right) - 2c\phi \right) = \phi^2.
	\end{equation}
	
	The determinant of $R(0)$  is now given by
	\begin{equation}\label{detM}
	\displaystyle \det R(0) = 8\pi d^{-1} \left( c \int_{-\pi}^{\pi} (D^{\frac{\alpha}{2}}\phi)^2 dx  + (c-1)\int_{-\pi}^{\pi}\phi^2dx\right)=16\pi d^{-1}\mathcal{B}_c(\phi).
	\end{equation}
	Since $c>\frac{1}{2}$, we obtain that $\mathcal{B}_c(\phi)>0$ and the sign of $R(0)$ is obtained only by $d^{-1}$.\\
	\indent On the other hand, by \cite[Theorem 4.1]{pel-book} we have the following identities
	\begin{equation}\label{indexformula}
	n(\mathcal{{\tilde{L}}}|_{\{1,\phi^2\}\bot}) = n(\mathcal{\tilde{L}}) - n_0 - z_0,
	\end{equation}
	and
	\begin{equation}\label{indexformula1}
	z(\mathcal{{\tilde{L}}}|_{\{1,\phi^2\}\bot}) = z(\mathcal{\tilde{L}}) +z_0-z_{\infty}
	\end{equation}
	where $n_0$ is the number of negative eigenvalues of $R(0)$, $z_0$ denotes the dimension of the kernel of $R(0)$ and $z_{\infty}$ corresponds the number of diverging eigenvalues, that is, $z_{\infty}=1$ if $d=0$, and $z_{\infty}=0$ otherwise. We then deduce the result \eqref{indexformula} from the equation \eqref{propr1}. \\
	\indent Finally, since $\phi$ is a single-lobe solution, we see that $\phi'\in \ker(\tilde{\mathcal{L}})$ has only two zeroes in the interval $[-\pi,\pi)$. This means, from the Oscillation Theorem in \cite{hur}, that $z(\tilde{\mathcal{L}})\in \{1,2\}$. If $d\neq 0$, we deduce from the second equality in $(\ref{eq15})$, $(\ref{eqmatrix})$ and $(\ref{eqmatrix2})$ that $\{1,\phi,\phi^2\}\subset{\rm range}(\tilde{\mathcal{L}})$, so that $z(\tilde{\mathcal{L}})=1$ by using \cite[Proposition 3.1]{hur}. If $d=0$, we have $z_{\infty}=1$ and since $\phi'\in\{1,\phi^2\}^{\bot}$, one has $z(\mathcal{{\tilde{L}}}|_{\{1,\phi^2\}\bot})\geq1$. Thus, $z(\tilde{\mathcal{L}})=2$ as requested.
\end{proof}

\begin{obs}\label{obs25}
	Let $\alpha\in\left(\frac{1}{3},2\right]$ be fixed. If $\ker(\mathcal{L}|_{X_0})=[\phi']$, by \cite[Lemma 3.8]{NPL} we obtain the existence of a smooth curve $\omega\in I\mapsto \psi_{\omega}\in H_{per}^{\infty}(\mathbb{T})\cap X_{0,e}$ of periodic waves for the equation $(\ref{ode2})$, where $I\subset(\frac{1}{2},+\infty)$ is a convenient open interval containing $c$. Important to mention that we can not assure if $\psi_{\omega}$ solves the minimization problem $(\ref{minB})$, neither that it is a single-lobe, except at $\omega=c$, where $\psi_{\omega}=\phi$.
\end{obs}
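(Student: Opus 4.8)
The statement is a local branch-continuation result, so the natural device is the implicit function theorem applied to the stationary equation $(\ref{ode2})$, in the framework of \cite[Lemma 3.8]{NPL}. The plan is to introduce the map
$$F:\left(H_{per}^{\alpha}(\mathbb{T})\cap X_{0,e}\right)\times\left(\tfrac{1}{2},+\infty\right)\longrightarrow L^2_{per}(\mathbb{T})\cap X_{0,e},\qquad F(\psi,\omega):=\omega D^{\alpha}\psi+(\omega-1)\psi-\tfrac{1}{2}\Pi_0\psi^2,$$
where $X_{0,e}$ is the closed subspace of even functions of $X_0$. First I would verify that $F$ is well defined and smooth: $\omega D^{\alpha}$ sends $H_{per}^{\alpha}(\mathbb{T})$ into $L^2_{per}(\mathbb{T})$, the projection $\Pi_0$ outputs mean-zero functions, and, since $\alpha>\tfrac13$ guarantees the embedding $H_{per}^{\alpha}(\mathbb{T})\hookrightarrow L^4_{per}(\mathbb{T})$, the quadratic term $\psi\mapsto\Pi_0\psi^2$ is a smooth map into $L^2_{per}(\mathbb{T})$; evenness is preserved because $\phi$ and all nearby profiles are even. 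Equation $(\ref{ode2})$ then reads precisely $F(\phi,c)=0$.

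The heart of the argument is to show that the partial derivative $D_\psi F(\phi,c)$ is an isomorphism onto the target. A direct computation, using $\Pi_0(\phi v)=\phi v-\tfrac{1}{2\pi}\la\phi,v\ra$ together with the definitions of $\mathcal{L}$ in $(\ref{operator})$ and of $\mathcal{L}|_{X_0}$, gives
$$D_\psi F(\phi,c)v=cD^{\alpha}v+(c-1)v-\Pi_0(\phi v)=\mathcal{L}|_{X_0}v.$$
Since $\phi$ is even, $\mathcal{L}|_{X_0}$ maps $X_{0,e}$ into itself, hence restricts to a self-adjoint operator on $X_{0,e}$ with domain $H_{per}^{\alpha}(\mathbb{T})\cap X_{0,e}$ and compact resolvent (the embedding $H_{per}^{\alpha}(\mathbb{T})\hookrightarrow L^2_{per}(\mathbb{T})$ being compact on $\mathbb{T}$); in particular it is Fredholm of index zero. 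The decisive step — and the point I expect to be the main obstacle to state cleanly — concerns the kernel: the hypothesis $\ker(\mathcal{L}|_{X_0})=[\phi']$ combined with the fact that $\phi'$ is \emph{odd} forces $\ker(\mathcal{L}|_{X_0})\cap X_{0,e}=\{0\}$. Trivial kernel together with index zero yields that $D_\psi F(\phi,c)=\mathcal{L}|_{X_0}$ is an isomorphism from $H_{per}^{\alpha}(\mathbb{T})\cap X_{0,e}$ onto $L^2_{per}(\mathbb{T})\cap X_{0,e}$.

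With this isomorphism the implicit function theorem, in the form of \cite[Lemma 3.8]{NPL}, yields an open interval $I\subset(\tfrac12,+\infty)$ containing $c$ and a smooth map $\omega\in I\mapsto\psi_{\omega}\in H_{per}^{\alpha}(\mathbb{T})\cap X_{0,e}$ with $F(\psi_\omega,\omega)=0$, i.e. solving $(\ref{ode2})$, such that $\psi_c=\phi$ and every $\psi_\omega$ has period $2\pi$; local uniqueness near $(\phi,c)$ is part of the conclusion. A standard bootstrap, iterating the smoothing of $(\omega D^{\alpha})^{-1}$ against the smooth nonlinearity exactly as after $(\ref{ode23})$, upgrades each $\psi_\omega$ to $H_{per}^{\infty}(\mathbb{T})$. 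The caveat in the statement then requires no extra work: the implicit function theorem is purely local, so it only certifies that $\psi_\omega$ solves $(\ref{ode2})$ and cannot propagate the global variational characterization $(\ref{minB})$ or the single-lobe geometry of $\phi$, both of which are available only at $\omega=c$, where $\psi_c=\phi$ is the minimizer of Lemma \ref{minlema}.
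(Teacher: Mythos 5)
Your proposal is correct and follows essentially the same route as the paper, which establishes this remark by invoking \cite[Lemma 3.8]{NPL}: an implicit function theorem applied to $F(\psi,\omega)=\omega D^{\alpha}\psi+(\omega-1)\psi-\frac{1}{2}\Pi_0\psi^2$ on the even zero-mean sector, where the hypothesis $\ker(\mathcal{L}|_{X_0})=[\phi']$ together with the oddness of $\phi'$ (since $\phi$ is even) makes $D_{\psi}F(\phi,c)=\mathcal{L}|_{X_0}$ invertible on $X_{0,e}$. Your identification of the Fr\'echet derivative with $\mathcal{L}|_{X_0}$, the Fredholm index-zero argument, and the bootstrap to $H_{per}^{\infty}(\mathbb{T})$ are precisely the ingredients the paper leaves implicit in its citation.
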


\subsection{Uniqueness of periodic minimizers}

We give sufficient conditions to determine a result of uniqueness for the periodic minimizer $\phi$ obtained in Lemma $\ref{minlema}$. To do so, we will follow similar arguments  in \cite[Section 5]{FL}. \\
\indent Let us consider $\mathcal{L}|_{X_0}=\mathcal{L}+\frac{1}{2\pi}\langle\phi,\cdot\rangle$. Throughout this section we assume that
\begin{equation}\label{hyp-uniq}
z(\mathcal{L}|_{X_0})=1,
\end{equation}
for every $c>\frac{1}{2}$ and $\alpha\in\left(\frac{1}{2},2\right]$. Using that $\phi\in X_0$ we obtain $\langle\mathcal{L}|_{X_0}\phi,\phi\rangle=\langle\mathcal{L}\phi,\phi\rangle=-\frac{1}{2}\int_{-\pi}^{\pi}\phi^3dx=-2\mathcal{B}_c(\phi)<0$, so that $n(\mathcal{L}|_{X_0})\geq1$. By \cite[Corollary 4.5]{NPL}, we can obtain automatically the condition $(\ref{hyp-uniq})$ by assuming the additional hypothesis $n(\mathcal{L})=1$
for every $c>\frac{1}{2}$ and $\alpha\in\left(\frac{1}{2},2\right]$. Equality in $(\ref{hyp-uniq})$ enables us to conclude an important property for $\mathcal{L}|_{X_0}$ as
\begin{equation}\label{hyp-uniq2}
\ker(\mathcal{L} |_{X_0}) =[ \phi'].
\end{equation}
\indent To simplify the notation, we define the real Banach space
\begin{equation}\label{banach-uniq}
V=\{f\in L_{per}^3(\mathbb{T})\cap X_0;\  f\ \mbox{is even and real-valued}\}
\end{equation}
whose norm, since $L_{per}^3(\mathbb{T})\hookrightarrow L_{per}^2(\mathbb{T})$, is given by $||f||_{V}:=||f||_{L_{per}^3(\mathbb{T})}.$ For a fixed $\alpha_0\in (\frac{1}{2},2]$, our first goal is to construct a local branch of solutions $(\phi_{\alpha}, c_{\alpha})\in V\times \left(\frac{1}{2},+\infty\right)$ of $(\ref{ode2})$ parameterized by the index $\alpha$ in some interval $[\alpha_0,\alpha_0+\delta)$, for some $\delta>0$ small enough.\\
\begin{prop}\label{prop1-uniq}
	Let $\alpha_0\in(\frac{1}{2}, 2]$. Suppose that
	$(\phi_{0}, c_{0})\in V\times \left(\frac{1}{2},+\infty\right)$ with $\phi_0$ being a nonzero solution of $(\ref{ode2})$ and
	satisfying assumption $(\ref{hyp-uniq})$  with $\alpha=\alpha_0$ and $c=c_{0}$. Then, for some $\delta>0$, there exists a $C^1-$map
	$\alpha\in I\rightarrow (\phi_{\alpha}, c_{\alpha})\in V\times \left(\frac{1}{2},+\infty\right)$, defined in the interval $I=[\alpha_0, \alpha_0+\delta)$, such that the following
	holds:\\
	\begin{itemize}
		\item[(i)] $(\phi_{\alpha}, c_{\alpha})$ solves equation $(\ref{ode2})$ with $c=c_{\alpha}$, for all $\alpha\in I$ and the pair $(\phi_{\alpha}, c_{\alpha})$ satisfies $(\ref{hyp-uniq})$.
		\item[(ii)] There exists $\varepsilon>0$ such that $(\phi_{\alpha}, c_{\alpha})$ is the unique solution of $(\ref{ode2})$ for $\alpha\in I$ in
		the neighborhood $\{(\phi,c)\in V\times \left(\frac{1}{2},+\infty\right);\ ||\phi-\phi_{0}||_V+|c-c_{0}|<\varepsilon\}$.
		\item[(iii)] For all $\alpha\in I$, we have
		$\int_{-\pi}^{\pi}\phi_{\alpha}^3dx=\int_{-\pi}^{\pi}\phi_{0}^3dx.$
	\end{itemize}
\end{prop}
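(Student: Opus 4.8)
The plan is to run an implicit function theorem in the exponent $\alpha$, carrying $(\phi,c)$ as a pair of unknowns and adjoining the scalar side condition $\int_{-\pi}^{\pi}\phi^3\,dx=\int_{-\pi}^{\pi}\phi_0^3\,dx$ so that the number of unknowns matches the number of equations. First I would note that the symbol $c|\xi|^{\alpha}+(c-1)$ of $cD^{\alpha}+c-1$ is bounded below by $2c-1>0$ on the nonzero Fourier modes (this is exactly why $c=\tfrac12$ is the bifurcation point), so that $cD^{\alpha}+c-1$ is invertible on $X_0$ and $(\ref{ode2})$ may be recast as the fixed point $\phi=(cD^{\alpha}+c-1)^{-1}\big[\tfrac12\Pi_0\phi^2\big]$. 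Accordingly I would define
\[
\mathcal{F}(\alpha,\phi,c)=\left(\phi-(cD^{\alpha}+c-1)^{-1}\Big[\tfrac12\Pi_0\phi^2\Big],\ \int_{-\pi}^{\pi}\phi^3\,dx-\int_{-\pi}^{\pi}\phi_0^3\,dx\right),
\]
regarded as a map $I\times V\times(\tfrac12,+\infty)\to V\times\mathbb{R}$, and observe that $\mathcal{F}(\alpha_0,\phi_0,c_0)=(0,0)$ precisely because $\phi_0$ solves $(\ref{ode2})$. Since $\phi_0$ is smooth and $\tfrac12\Pi_0\phi^2\in L_{per}^{3/2}(\mathbb{T})$ whenever $\phi\in L_{per}^3(\mathbb{T})$, the smoothing of the resolvent together with $\alpha_0>\tfrac12$ returns an element of $V$, so $\mathcal{F}$ is well defined on these spaces.

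I would then check that $\mathcal{F}$ is $C^1$. Regularity in $(\phi,c)$ is routine: the nonlinearity is a cubic form and $c\mapsto(cD^{\alpha}+c-1)^{-1}$ is analytic by a Neumann series. The delicate point is differentiability in $\alpha$, where $\partial_\alpha D^{\alpha}$ carries the symbol $|\xi|^{\alpha}\ln|\xi|$; one must verify that $\alpha\mapsto(cD^{\alpha}+c-1)^{-1}$ is $C^1$ into bounded operators on the relevant spaces. This is where the resolvent smoothing is essential, since acting on the smooth datum $\Pi_0\phi^2$ the logarithmic factor is harmless and the loss is absorbed by the remaining resolvent. I expect this $\alpha$-regularity to be the main technical obstacle, to be handled exactly as in \cite[Section 5]{FL}.

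The heart of the matter is the invertibility of $T:=D_{(\phi,c)}\mathcal{F}(\alpha_0,\phi_0,c_0)\colon V\times\mathbb{R}\to V\times\mathbb{R}$. Using that the Fréchet derivative of $\tfrac12\Pi_0\phi^2$ in the direction $v$ is $\Pi_0(\phi_0 v)$ and that $(c_0D^{\alpha_0}+c_0-1)v-\Pi_0(\phi_0 v)=\mathcal{L}|_{X_0}v$, a short computation gives
\[
T(v,s)=\Big((c_0D^{\alpha_0}+c_0-1)^{-1}\big[\mathcal{L}|_{X_0}v+s(D^{\alpha_0}+1)\phi_0\big],\ 3\langle\phi_0^2,v\rangle\Big).
\]
Since composing the resolvent with multiplication by the smooth $\phi_0$ is compact on $V$ and the remaining entries of $T$ have finite rank, $T$ is a compact perturbation of the identity on $V\times\mathbb{R}$, hence Fredholm of index zero; it therefore suffices to prove injectivity. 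If $T(v,s)=0$, applying the isomorphism $c_0D^{\alpha_0}+c_0-1$ gives $\mathcal{L}|_{X_0}v=-s(D^{\alpha_0}+1)\phi_0$ together with $\langle\phi_0^2,v\rangle=0$. Here the restriction to even functions is decisive: by $(\ref{hyp-uniq2})$ the kernel of $\mathcal{L}|_{X_0}$ is spanned by the odd function $\phi'$, so $\mathcal{L}|_{X_0}$ is a bijection on the even subspace, and differentiating $(\ref{ode2})$ in $c$ shows $\mathcal{L}|_{X_0}\big(-\frac{d}{dc}\phi\big)=(D^{\alpha_0}+1)\phi_0$ along the $c$-branch of Remark $\ref{obs25}$. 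Consequently the first relation forces $v=s\frac{d}{dc}\phi$, and the constraint collapses to the single scalar nondegeneracy $\langle\phi_0^2,\frac{d}{dc}\phi\rangle\neq0$.

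To settle this I would evaluate $\langle\phi_0^2,\frac{d}{dc}\phi\rangle$ from the profile equation alone. Substituting $\phi_0^2=2c_0D^{\alpha_0}\phi_0+2(c_0-1)\phi_0+2A(c_0)$ from $(\ref{ode23})$ and pairing with $\frac{d}{dc}\phi$ gives $\langle\phi_0^2,\frac{d}{dc}\phi\rangle=c_0K'(c_0)+4\pi(c_0-1)A'(c_0)$, where $K(c)=\int_{-\pi}^{\pi}(D^{\alpha_0/2}\phi)^2\,dx$ and $\int_{-\pi}^{\pi}\phi^2\,dx=4\pi A$; multiplying $(\ref{ode23})$ by $\phi$ and integrating yields $\int_{-\pi}^{\pi}\phi^3\,dx=2\big[cK+4\pi(c-1)A\big]$, and differentiating this identity in $c$ and comparing with $\frac{d}{dc}\int\phi^3\,dx=3\langle\phi_0^2,\frac{d}{dc}\phi\rangle$ collapses everything to
\[
\Big\langle\phi_0^2,\frac{d}{dc}\phi\Big\rangle=2\int_{-\pi}^{\pi}(D^{\alpha_0/2}\phi_0)^2\,dx+8\pi A(c_0)>0,
\]
using only $A(c_0)>0$ for the nonzero $\phi_0$. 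Thus $T$ is injective, hence an isomorphism, and the implicit function theorem produces the $C^1$ branch $\alpha\mapsto(\phi_\alpha,c_\alpha)$ on some $I=[\alpha_0,\alpha_0+\delta)$. Item (iii) is then immediate from $\mathcal{F}_2(\phi_\alpha,c_\alpha)=0$, item (ii) is the local uniqueness clause of the theorem, and for item (i) the condition $(\ref{hyp-uniq})$ persists after possibly shrinking $\delta$, since $\phi_\alpha'$ always lies in $\ker(\mathcal{L}|_{X_0})$ while upper semicontinuity of the kernel dimension along the $C^1$ branch prevents it from increasing near $\alpha_0$.
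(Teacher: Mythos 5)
Your proposal is correct, and it sets up exactly the same implicit function theorem framework as the paper: the same map $\mathcal{F}$ on $V\times\left(\frac{1}{2},+\infty\right)$ with the adjoined scalar constraint $\int_{-\pi}^{\pi}\phi^3dx-\int_{-\pi}^{\pi}\phi_0^3dx$, the same identification of the $\phi$-derivative with $\mathcal{L}|_{X_0}$, and the same decisive use of evenness, through which $(\ref{hyp-uniq2})$ makes $\mathcal{L}|_{X_0}$ invertible on $X_{0,e}$. Where you genuinely diverge is in verifying that the linearization is invertible. The paper solves the $2\times2$ block system explicitly: writing $\mathcal{P}=-(c_0D^{\alpha_0}+(c_0-1))^{-1}\Pi_0\phi_0$, it inverts $1+\mathcal{P}$ on $X_{0,e}$ and reduces everything to the scalar condition $\langle\phi_0^2,(1+\mathcal{P})^{-1}h_0\rangle\neq0$, which it evaluates by Plancherel as the strictly negative sum $-\pi\sum_{k\neq0}\frac{|k|^{\alpha_0}+1}{(c_0|k|^{\alpha_0}+c_0-1)^2}|\widehat{\Pi_0\phi_0^2}(k)|^2$. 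You instead argue by the Fredholm alternative, identify the only possible kernel element as $v=s\frac{d}{dc}\phi$ by differentiating $(\ref{ode2})$ along the $c$-branch of Remark $\ref{obs25}$, and reduce to $\langle\phi_0^2,\frac{d}{dc}\phi\rangle\neq0$, which you compute from integral identities to be $2\int_{-\pi}^{\pi}(D^{\alpha_0/2}\phi_0)^2dx+8\pi A(c_0)>0$. The two nondegeneracy quantities are in fact negatives of one another, since $(1+\mathcal{P})^{-1}h_0=(\mathcal{L}|_{X_0})^{-1}(D^{\alpha_0}+1)\phi_0=-\frac{d}{dc}\phi$; expanding $\Pi_0\phi_0^2=2(c_0D^{\alpha_0}+c_0-1)\phi_0$ in the paper's Fourier sum reproduces your formula exactly, so the two computations corroborate each other. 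What each buys: the paper's route is more self-contained, needing only the invertibility of $\mathcal{L}|_{X_0}$ on even functions plus Fourier analysis; yours requires one extra ingredient, namely the prior existence of a $C^1$ branch $c\mapsto\phi(c)$ through $(\phi_0,c_0)$ (a second application of the implicit function theorem via \cite[Lemma 3.8]{NPL}, legitimate under $(\ref{hyp-uniq})$), but in exchange it yields a closed-form, manifestly positive expression for the nondegeneracy, which is precisely the fractional-$\alpha$ generalization of the identity $(\ref{eqsol6})$ that the paper derives only for $\alpha=2$ inside Proposition $\ref{theo-uniq}$. Two minor points: your deferral of the $C^1$-in-$\alpha$ regularity of $\mathcal{F}$ to \cite{FL} matches the paper's own level of detail, and your persistence argument for $(\ref{hyp-uniq})$ in item (i) is sound provided you note that $0$ is an isolated eigenvalue of the discrete-spectrum operator $\mathcal{L}|_{X_0}$, so that exactly one eigenvalue remains near $0$ under the perturbation and it is pinned at $0$ by $\phi_{\alpha}'$; the paper gives no more detail there either.
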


\begin{proof}
	The proof of this result relies on the implicit function theorem. Since $c>\frac{1}{2}$, we obtain that $(\ref{ode2})$ can be written as
	$\phi=\left(cD^{\alpha}+\left(c-1\right)\right)^{-1}\frac{1}{2}\Pi_0\phi^2.$
	For some $\delta>0$ to be chosen later, let us define the mapping
	\begin{equation}\label{map-ift-uniq1}
	\mathcal{F}: V\times \left(\frac{1}{2},+\infty\right)\times I\rightarrow V\times \mathbb{R},
	\end{equation}
	given by
	\begin{equation}\label{map-ift-uniq2}
	\mathcal{F}(\phi,c,\alpha)=\left[\begin{array}{ccccc} \displaystyle\phi-\left(cD^{\alpha}+\left(c-1\right)\right)^{-1}\frac{1}{2}\Pi_0\phi^2\\\\
\displaystyle	\int_{-\pi}^{\pi}\phi^3dx-\int_{-\pi}^{\pi}\phi_{0}^3dx\end{array}\right].
	\end{equation}

	We see that $\mathcal{F}$ is a well defined $C^1$ map and $\mathcal{F}(\phi_{0},c_{0},\alpha_0)=0$. Our intention is to show the invertibility of the Fr\'echet derivative
	of $\mathcal{F}$ with respect $(\phi, c)$ at $(\phi_{0}, c_{0},\alpha_0)$. In fact, we see that

	$$
	D_{\phi,c}\mathcal{F}(\phi_0,c_0,\alpha_0)=\left[\begin{array}{ccccc} 1-(c_0D^{\alpha_0}+(c_0-1))^{-1}\Pi_0\phi_0 & & \displaystyle (c_0D^{\alpha_0}+(c_0-1))^{-2}(D^{\alpha}+1)\frac{1}{2}\Pi_0\phi_0^2\\\\
\displaystyle	3\langle\phi_0^2,\cdot\rangle & & 0\end{array}\right].
	$$
	
	We claim that $D_{\phi,c}\mathcal{F}$ is invertible at $(\phi_0,c_0,\alpha_0)$. To do so, for every $g\in V$ and $\lambda\in \mathbb{R}$ given, we need to show the existence of a unique pair $(f,\beta)\in V\times\mathbb{R}$ such that
	
	\begin{equation}\label{fr-der-uniq2}
\left[\begin{array}{ccccc} \displaystyle f-(c_0D^{\alpha_0}+(c_0-1))^{-1}\Pi_0(\phi_0 f) + \frac{\beta}{2} (c_0D^{\alpha_0}+(c_0-1))^{-2}(D^{\alpha}+1)\Pi_0\phi_0^2\\\\
\displaystyle	3\langle\phi_0^2,f\rangle \end{array}\right]=\left[\begin{array}{llll}g\\\\
	\lambda\end{array}\right].
	\end{equation}
	In order to simplify the notation, we define $\mathcal{P}=-(c_0D^{\alpha_0}+(c_0-1))^{-1}\Pi_0\phi_0$ and $h_0=\frac{1}{2}(c_0D^{\alpha_0}+(c_0-1))^{-2}(D^{\alpha}+1)\Pi_0\phi_0^2$. By $(\ref{fr-der-uniq2})$ we see that
	\begin{equation}\label{fr-der-uniq3}
	(1+\mathcal{P})f+\beta h_0=g,
	\end{equation}
	
	\begin{equation}\label{fr-der-uniq4}
	3\int_{-\pi}^{\pi}\phi_0^2 fdx=\lambda.
	\end{equation}
	\indent Now, $\mathcal{P}$ is a compact operator on $X_{0,e}=\{f\in X_0;\  f\ \mbox{is even}\}$ and from $(\ref{hyp-uniq})$ we obtain that $-1$ is not an element of the spectrum of $\mathcal{P}$. Thus, $1+\mathcal{P}$ is an invertible operator on $X_{0,e}$ and since $\mathcal{P}: V\rightarrow V$ (this fact follows similarly from \cite[Lemma E.1]{FL}), we obtain that $(1+\mathcal{P})^{-1}$ exists on $V$. Thus, we can express $f$ uniquely as
	\begin{equation}\label{f-uniq1}
	f=-\beta(1+\mathcal{P})^{-1}h_0+(1+\mathcal{P})^{-1}g.
	\end{equation}
	Gathering results in $(\ref{fr-der-uniq4})$ and $(\ref{f-uniq1})$, we deduce
	\begin{equation}\label{inp-uniq1}
	 3\beta\int_{-\pi}^{\pi}\phi_0^2(1+\mathcal{P})^{-1}h_0dx=-\lambda+3\int_{-\pi}^{\pi}\phi_0^2(1+\mathcal{P})^{-1}gdx.
	\end{equation}
	\indent By $(\ref{hyp-uniq2})$, we see that $(\mathcal{L}|_{X_0})^{-1}$ exists on $X_{0,e}$ and therefore, on $H_{per}^{\alpha_0}(\mathbb{T})\cap X_{0,e}$
	\begin{equation}\label{eq-inv-uniq}
	(1+\mathcal{P})^{-1}=(\mathcal{L}|_{X_0})^{-1}(c_0D^{\alpha_0}+(c_0-1))
	\end{equation}
	is well defined. To show that $\beta$ can be explicitly expressed in terms of $\phi_0$, $\mathcal{P}$ and $g$, we need to prove that $\langle \phi_0^2,(1+\mathcal{P})^{-1}h_0\rangle\neq0$. Indeed, using $(\ref{eq-inv-uniq})$, the fact that $\mathcal{L}|_{X_0}\phi_0=-\frac{1}{2}\Pi_0\phi_0^2$ and since $h_0\in V$, we see from $(\ref{inp-uniq1})$ that
\begin{eqnarray*}
	\langle \phi_0^2, (1+\mathcal{P})^{-1}h_0\rangle  &=& \langle A(c_0), (1+\mathcal{P})^{-1}h_0 \rangle- 2 \langle \mathcal{L}|_{X_0}\phi_0, (1+\mathcal{P})^{-1}h_0 \rangle \\ &=& -2 \langle \mathcal{L}|_{X_0}\phi_0, (\mathcal{L}|_{X_0})^{-1}(c_0D^{\alpha_0}+(c_0-1))h_0 \rangle \\ &=& - \langle (c_0D^{\alpha_0}+(c_0-1))\phi_0,  (c_0D^{\alpha_0}+(c_0-1))^{-2}(D^{\alpha}+1)\Pi_0\phi_0^2 \rangle \\ &=& - \frac{1}{2} \langle \Pi_0 \phi_0^2, (c_0D^{\alpha_0}+(c_0-1))^{-2}(D^{\alpha}+1)\Pi_0\phi_0^2 \rangle \\ &=& - \pi \sum_{k \neq 0}\frac{|k|^{\alpha_0} +1}{ (c_0|k|^{\alpha_0}+(c_0-1))^2} | \widehat{\Pi_0 \phi_0^2}(k)|^2 \neq 0,
\end{eqnarray*}
where $\widehat{f}$ indicates the periodic Fourier transform for a function $f\in L_{per}^2(\mathbb{T})$. The rest of the proof can be done by a direct application of the implicit function theorem.
\end{proof}

The next step is to follow similar arguments as found in \cite[Subsection 5.2]{FL}. In fact, we consider the corresponding maximal extension of the branch $(\phi_{\alpha}, c_{\alpha})$ for
$\alpha\in [\alpha_0, \alpha_{*})$, where $\alpha_{*}$ is given by
$$\begin{array}{rrrr}\alpha_{*}:=\sup\{q;\ \alpha_0<q<2,\  (\phi_{\alpha},c_{\alpha})\in C^1([\alpha_0, q);V\times \left(\frac{1}{2},+\infty\right))\ \mbox{given by Proposition \ref{prop1-uniq}}\\\\
\mbox{and}\ (\phi_{\alpha}, c_{\alpha})\ \mbox{satisfies (\ref{ode2}) for}\ \alpha\in [\alpha_0, q)\}.\end{array}$$
It is clear that $\alpha_{*}\leq2$ and we prove that $\alpha_{*}=2$.

\begin{prop}\label{prop-seq-ext}
	Let $\{\alpha_n\}_{n=1}^{n=+\infty}\subset (\frac{1}{2}, \alpha_{*})$ be a sequence such that $\alpha_n\rightarrow \alpha_{*}$. Furthermore,
	we assume that $\phi_{\alpha_n}\in X_{0,e}$ are the corresponding solutions obtained in Proposition $\ref{prop1-uniq}$ with wave speed $c_{\alpha_{n}}$. Up to a subsequence, it follows that
	$$\phi_{\alpha_n}\rightarrow \phi_{*}\ \mbox{in}\ V\cap L_{per}^4(\mathbb{T})\ \ \mbox{and}\ \ c_{\alpha_n}\rightarrow c_{*}$$
	where $c_{*}\in \left(\frac{1}{2},+\infty\right)$ and $\phi_{*}$ satisfy $(\ref{ode2})$. Moreover, the corresponding maximal branch $(\phi_{\alpha}, c_{\alpha})\in C^1([\alpha_0, \alpha_{*});V\times\left(\frac{1}{2},+\infty\right))$ extends to $\alpha_{*}=2$.
\end{prop}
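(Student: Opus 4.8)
The plan is to reproduce, in the periodic setting, the compactness-and-continuation scheme of \cite[Subsection 5.2]{FL}: first I extract a strongly convergent subsequence whose limit $(\phi_*,c_*)$ solves $(\ref{ode2})$ at $\alpha=\alpha_*$, and then I feed this limit back into Proposition \ref{prop1-uniq} to continue the branch past $\alpha_*$, which forces $\alpha_*=2$. The starting point is the scalar identity obtained by testing $(\ref{ode2})$ with $\phi_{\alpha_n}$ and using $\int_{-\pi}^{\pi}\phi_{\alpha_n}\,dx=0$: for every $n$ one has $2\mathcal{B}_{c_{\alpha_n}}(\phi_{\alpha_n})=\frac12\int_{-\pi}^{\pi}\phi_{\alpha_n}^3\,dx=\frac{\tau}{2}$, where the last equality is the conserved constraint $\int_{-\pi}^{\pi}\phi_{\alpha_n}^3\,dx=\int_{-\pi}^{\pi}\phi_0^3\,dx=\tau$ furnished by Proposition \ref{prop1-uniq}(iii). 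Thus $\mathcal{B}_{c_{\alpha_n}}(\phi_{\alpha_n})\equiv\tau/4$ is independent of $n$. Combined with the Poincar\'e--Wirtinger inequality $\|D^{\alpha_n/2}\phi_{\alpha_n}\|_{L^2}^2\ge\|\phi_{\alpha_n}\|_{L^2}^2$ this yields $(2c_{\alpha_n}-1)\|\phi_{\alpha_n}\|_{L^2}^2\le\tau/2$ and $\|D^{\alpha_n/2}\phi_{\alpha_n}\|_{L^2}^2-\|\phi_{\alpha_n}\|_{L^2}^2\le\tau/(2c_{\alpha_n})$. I then pass to a subsequence with $c_{\alpha_n}\to c_*\in[\tfrac12,+\infty]$ and rule out the two endpoints.

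Ruling out $c_*=+\infty$ is immediate: from the identity both $\|\phi_{\alpha_n}\|_{L^2}$ and $\|D^{\alpha_n/2}\phi_{\alpha_n}\|_{L^2}$ tend to $0$, and since $\alpha_n\ge\alpha_{\min}>\tfrac12$ for large $n$ the embedding $H_{per}^{\alpha_n/2}(\mathbb{T})\hookrightarrow L_{per}^3(\mathbb{T})$ of Lemma \ref{minlema} is uniform, so $\|\phi_{\alpha_n}\|_{L^3}\to0$, contradicting $\|\phi_{\alpha_n}\|_{L^3}^3\ge\int_{-\pi}^{\pi}\phi_{\alpha_n}^3\,dx=\tau>0$. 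The exclusion of $c_*=\tfrac12$ is the main obstacle, because the Fourier symbol $c|k|^{\alpha}+(c-1)$ of the operator in $(\ref{ode2})$ resonates at $k=\pm1$ precisely when $c=\tfrac12$. Here I would use that the high-frequency part is bounded, $\sum_{|k|\ge2}|\widehat{\phi_{\alpha_n}}(k)|^2\le(2^{\alpha_{\min}}-1)^{-1}\big(\|D^{\alpha_n/2}\phi_{\alpha_n}\|_{L^2}^2-\|\phi_{\alpha_n}\|_{L^2}^2\big)\le\tau/(2^{\alpha_{\min}}-1)$, so that $\widehat{\phi_{\alpha_n}}(2)$ stays bounded, while the $k=\pm2$ component of $(\ref{ode2})$ reads $(c_{\alpha_n}2^{\alpha_n}+c_{\alpha_n}-1)\widehat{\phi_{\alpha_n}}(2)=\tfrac12\widehat{\phi_{\alpha_n}^2}(2)$ and the right-hand side contains the resonant term $\widehat{\phi_{\alpha_n}}(1)^2$. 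Confronting these as $c_{\alpha_n}\to\tfrac12$ bounds the resonant amplitude $\widehat{\phi_{\alpha_n}}(\pm1)$, and in the limit contradicts the solvability condition $\widehat{\phi_*^2}(\pm1)=0$ that $c_*=\tfrac12$ would impose on a nonzero even solution of $(\ref{ode2})$. Hence $c_*\in(\tfrac12,+\infty)$, and the same identity gives a uniform bound for $\{\phi_{\alpha_n}\}$ in $H_{per}^{\alpha_{\min}/2}(\mathbb{T})$ with $\alpha_{\min}>\tfrac12$.

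With $c_{\alpha_n}\to c_*\in(\tfrac12,+\infty)$ and $\{\phi_{\alpha_n}\}$ bounded in $H_{per}^{\alpha_{\min}/2}(\mathbb{T})$, the compact embeddings $H_{per}^{\alpha_{\min}/2}(\mathbb{T})\hookrightarrow\hookrightarrow L_{per}^3(\mathbb{T})\cap L_{per}^4(\mathbb{T})$ (valid since $\alpha_{\min}>\tfrac12$) let me pass to a further subsequence with $\phi_{\alpha_n}\to\phi_*$ strongly in $V\cap L_{per}^4(\mathbb{T})$ and $\phi_{\alpha_n}\rightharpoonup\phi_*$ weakly in $H_{per}^{\alpha_{\min}/2}(\mathbb{T})$. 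The $L^4$-convergence gives $\phi_{\alpha_n}^2\to\phi_*^2$ in $L_{per}^2(\mathbb{T})$, hence $\widehat{\Pi_0\phi_{\alpha_n}^2}(k)\to\widehat{\Pi_0\phi_*^2}(k)$ for each $k$. Writing $(\ref{ode2})$ on the Fourier side as $\widehat{\phi_{\alpha_n}}(k)=\tfrac12\,\widehat{\Pi_0\phi_{\alpha_n}^2}(k)\big/(c_{\alpha_n}|k|^{\alpha_n}+c_{\alpha_n}-1)$, where the denominator is bounded below by $2c_{\alpha_n}-1\ge 2c_*-1-o(1)>0$, I pass to the limit mode-by-mode to obtain $\widehat{\phi_*}(k)=\tfrac12\,\widehat{\Pi_0\phi_*^2}(k)\big/(c_*|k|^{\alpha_*}+c_*-1)$. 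Thus $\phi_*$ solves $(\ref{ode2})$ with $(c_*,\alpha_*)$, lies in $H_{per}^{\alpha_*}(\mathbb{T})$ (division by $\sim c_*|k|^{\alpha_*}$ gains $\alpha_*$ derivatives on $\Pi_0\phi_*^2\in L_{per}^2(\mathbb{T})$), is even, and satisfies $\int_{-\pi}^{\pi}\phi_*^3\,dx=\tau$, so $\phi_*\neq0$.

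Finally I invoke the extension step. The standing hypothesis $(\ref{hyp-uniq})$ holds at $(\phi_*,c_*)$ with $\alpha=\alpha_*\in(\tfrac12,2]$ and $c_*>\tfrac12$, so $(\phi_*,c_*)$ is an admissible base point for Proposition \ref{prop1-uniq}. If $\alpha_*<2$, applying that proposition at $(\phi_*,c_*,\alpha_*)$ produces a $C^1$ continuation $\alpha\mapsto(\phi_\alpha,c_\alpha)$ on $[\alpha_*,\alpha_*+\delta)$ which again preserves $\int_{-\pi}^{\pi}\phi_\alpha^3\,dx=\tau$; by the local uniqueness in Proposition \ref{prop1-uniq}(ii) this continuation agrees with the limit of the original branch at $\alpha_*$ and glues into a $C^1$ solution curve on $[\alpha_0,\alpha_*+\delta)$, contradicting the definition of $\alpha_*$ as the supremum of such extension exponents. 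Therefore $\alpha_*=2$, which is the assertion. I expect the only genuinely delicate point to be the quantitative exclusion of the bifurcation value $c_*=\tfrac12$ described above; the remaining estimates and the gluing argument are routine once the uniform $H^{\alpha_{\min}/2}$-bound is in hand.
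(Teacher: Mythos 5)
Your overall scheme is the same compactness-and-continuation argument as the paper's: the identity $2\mathcal{B}_{c_{\alpha_n}}(\phi_{\alpha_n})=\tfrac12\int_{-\pi}^{\pi}\phi_{\alpha_n}^3\,dx=\tfrac{\tau}{2}$, the exclusion of $c_{\alpha_n}\to+\infty$, compactness into $V\cap L^4_{per}(\mathbb{T})$, the Fourier-side passage to the limit equation, and the continuation past $\alpha_*$ via Proposition \ref{prop1-uniq} all match (and your resonance estimate at the modes $k=\pm2$, which gives a uniform $H^{\alpha_{\min}/2}_{per}$ bound even when $c_{\alpha_n}\to\tfrac12$, is a genuine improvement over the paper's compactness step, whose bound $(2c_{\alpha_n}-1)\|\phi_{\alpha_n}\|_{L^2_{per}}^2\le\tfrac12\int_{-\pi}^{\pi}\phi_0^3\,dx$ degenerates in that regime). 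The genuine gap is the final step of your exclusion of $c_*=\tfrac12$. From the $k=\pm1$ modes of the limit equation you correctly obtain the necessary condition $\widehat{\phi_*^2}(\pm1)=0$, but this condition is not contradictory for a nonzero even zero-mean solution, and you give no argument why it should be. It is automatically satisfied by any solution whose Fourier support lies in the even integers, i.e.\ by $\pi$-periodic solutions, and such nonzero solutions of \eqref{ode2} with $c=\tfrac12$ exist for every $\alpha\in(\tfrac12,2]$: writing $\phi(x)=\psi(2x)$, equation \eqref{ode2} with $c=\tfrac12$ becomes
\begin{equation*}
2^{\alpha-1}D^{\alpha}\psi-\tfrac12\psi=\tfrac12\Pi_0\psi^2,
\end{equation*}
whose quadratic form obeys $\sum_{k\neq0}\bigl(2^{\alpha-1}|k|^{\alpha}-\tfrac12\bigr)|\widehat{\psi}(k)|^2\ge\bigl(2^{\alpha-1}-\tfrac12\bigr)\sum_{k\neq0}|\widehat{\psi}(k)|^2>0$ on zero-mean functions, so the constrained minimization of Lemma \ref{minlema}, run verbatim with this coercive form, produces a nonzero even solution $\psi$; equivalently, at $\alpha=2$ these are the dnoidal waves \eqref{dnsol} of fundamental period $\pi$, whose admissible speeds $c>\frac{1}{1+2^{\alpha}}$ include $c=\tfrac12$. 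Since the branch $(\phi_{\alpha},c_{\alpha})$ is not known to remain single-lobe (or to keep a nonvanishing first harmonic) away from $\alpha_0$, nothing prevents the limit $\phi_*$ from being such a higher-symmetry wave, so no contradiction follows from $\widehat{\phi_*^2}(\pm1)=0$ alone.

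The paper circumvents exactly this difficulty by a different ordering: it does not attempt to exclude $c_*=\tfrac12$ at a general $\alpha_*$. It first runs the continuation argument to force $\alpha_*=2$, and only then rules out $c_*=\tfrac12$ by appealing to the explicit classification of $2\pi$-periodic zero-mean solutions of the limiting ODE \eqref{ode123} at $\alpha_*=2$ (Remark \ref{rem-uniq1}): along the dnoidal family \eqref{dnsol}--\eqref{cvalue} one has $c_*>\tfrac12$, so $c_*=\tfrac12$ would force $\phi_*\equiv0$, contradicting $\int_{-\pi}^{\pi}\phi_*^3\,dx=\tau>0$. To repair your proof you must either import this $\alpha=2$ classification at the end, as the paper does, or supply an argument excluding even-harmonic (period-halved) limits; as written, your proof that $c_*\in\left(\tfrac12,+\infty\right)$ is incomplete.
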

\begin{proof}
	First, let us suppose that $\{c_{\alpha_n}\}_{n=1}^{+\infty}\subset \left(\frac{1}{2},+\infty\right)$ is not a bounded sequence. For all $M>0$ large enough, there exists an index $m:=n_{M}\in\mathbb{N}$ such that $c_{\alpha_{m}}>M$. Thus, from $(\ref{ode2})$ and the fact that $\int_{-\pi}^{\pi}\phi_{\alpha_n}^3dx=\int_{-\pi}^{\pi}\phi_0^3>0$ for all $n\in\mathbb{N}$, we obtain
	$$
	 0\leq\int_{-\pi}^{\pi}(D^{\alpha_m/2}\phi_{\alpha_m})^2dx\leq\frac{1}{2M}\int_{-\pi}^{\pi}\phi_{\alpha_{n}}^3dx\leq\int_{-\pi}^{\pi}\phi_{0}^3dx.
$$
	\indent Since $\alpha_m\geq \alpha_0$, the Sobolev embedding $H_{per}^{\alpha_m/2}(\mathbb{T})\cap X_{0,e}\hookrightarrow H_{per}^{\alpha_0/2}(\mathbb{T})\cap X_{0,e}$ for all $\alpha_0>\frac{1}{2}$ is valid. Now,
	$f\mapsto ||D^{s}f||_{L_{per}^2(\mathbb{T})}$ is an equivalent norm on the space $H_{per}^{s}(\mathbb{T})\cap X_{0,e}$ for all $s>0$, and thus $\{\phi_{\alpha_m}\}_n$ is a bounded sequence on $H_{per}^{\alpha_0/2}(\mathbb{T})\cap X_{0,e}$. Since $H_{per}^{\alpha_0/2}(\mathbb{T})\cap X_{0,e}$ is compactly embedded into $V\hookrightarrow X_{0,e}$ and into $L_{per}^4(\mathbb{T})$, there exists $\phi_{0}\in V\cap L_{per}^4(\mathbb{T})$ such that, up to a subsequence $\int_{-\pi}^{\pi}\phi_{\alpha_m}^3dx\rightarrow\int_{-\pi}^{\pi}\phi_{0}^3dx>0$, $\int_{-\pi}^{\pi}\phi_{\alpha_m}^4dx\rightarrow\int_{-\pi}^{\pi}\phi_{0}^4dx>0$ and $\int_{-\pi}^{\pi}\phi_{\alpha_m}^2dx\rightarrow \int_{-\pi}^{\pi}\phi_{0}^2dx>0$, as $m\rightarrow+\infty$. By $(\ref{ode2})$ and the Poincar\'e-Wirtinger inequality, we deduce
	\begin{equation}\label{est-bound-seq-uniq1}
	0\leq(2M-1)\int_{-\pi}^{\pi}\phi_{\alpha_m}^2dx\leq (2c_{\alpha_m}-1)\int_{-\pi}^{\pi}\phi_{\alpha_m}^2dx\leq\frac{1}{2} \int_{-\pi}^{\pi}\phi_0^3dx.
	\end{equation}
	When $m\rightarrow+\infty$, we obtain from $(\ref{est-bound-seq-uniq1})$ a contradiction, so that $\{c_{\alpha_n}\}_{n=1}^{+\infty}$ is a bounded sequence on $\mathbb{R}$. Therefore, there exists a $c_{*}\in \left[\frac{1}{2},+\infty\right)$ such that, up to a subsequence
	$
	c_{\alpha_n}\rightarrow c_{*}.
$ On the other hand, the fact that $c_{\alpha_n}\rightarrow c_{*}\geq\frac{1}{2}$ enables us to obtain,
	$$0\leq (2c_{\alpha_n}-1)||\phi_{\alpha_n}||_{L_{per}^2}^2\leq 2\mathcal{B}_{c_{\alpha_{n}}}(\phi_{\alpha_{n}})=\frac{1}{2}\int_{-\pi}^{\pi}\phi_{\alpha_{n}}^3dx=\frac{1}{2}\int_{-\pi}^{\pi}\phi_{0}^3dx,$$
using again the Poincar\'e-Wirtinger inequality. Similarly as determined above, we guarantee the existence of $\phi_{*}\in V\cap L_{per}^4(\mathbb{T})$ such that
	\begin{equation}\label{conv1}
	\int_{-\pi}^{\pi}\phi_{\alpha_n}^3dx\rightarrow\int_{-\pi}^{\pi}\phi_{*}^3dx,\ \ \  \int_{-\pi}^{\pi}\phi_{\alpha_n}^2dx\rightarrow\int_{-\pi}^{\pi}\phi_{*}^2dx,\ \ \ \mbox{and}\ \ \
	 \int_{-\pi}^{\pi}\phi_{\alpha_n}^4dx\rightarrow \int_{-\pi}^{\pi}\phi_{*}^4dx.
	\end{equation}
	
	\indent Next, from the equation $(\ref{ode2})$, we obtain
	$$
	 D^{\alpha_n}\phi_{\alpha_n}=-\left(1-\frac{1}{c_{\alpha_{n}}}\right)\phi_{\alpha_{n}}+\frac{1}{2c_{\alpha_n}}\phi_{\alpha_{n}}^2-\frac{A(c_{\alpha_{n}})}{c_{\alpha_n}},
	$$
	so that $\{\phi_{\alpha_{n}}\}_n$ is bounded in $H_{per}^{\alpha_{n}}(\mathbb{T})$. Since $\alpha_{n}>\frac{1}{2}$, it follows  that $H_{per}^{\alpha_{n}}(\mathbb{T})$ is a Banach algebra and thus $\{\phi_{\alpha_{n}}^2\}_{n}$ is bounded in $H_{per}^{\alpha_{n}}(\mathbb{T})$. From
		\begin{equation}\label{equalitywave1}
	D^{2\alpha_n}\phi_{\alpha_n}=-\left(1-\frac{1}{c_{\alpha_{n}}}\right)	 D^{\alpha_n}\phi_{\alpha_{n}}+\frac{1}{2c_{\alpha_n}}	D^{\alpha_n}\phi_{\alpha_{n}}^2,
	\end{equation}
	one has that $\{\phi_{\alpha_{n}}\}_n$ is bounded in $H_{per}^{2\alpha_{n}}(\mathbb{T})$. Following with this inductive process, there is $r\in\mathbb{N}$ large enough such that $r\alpha_{n}>\alpha_{*}$ and $\{\phi_{\alpha_{n}}\}_n$ is bounded in $H_{per}^{r\alpha_{n}}(\mathbb{T})$. By the compact embedding $H_{per}^{r\alpha_{n}}(\mathbb{T})\hookrightarrow H_{per}^{\alpha_{*}}(\mathbb{T})$, one has (modulus to a subsequence)
	\begin{equation}\label{equalitywave2}
	\phi_{\alpha_n}\rightarrow\phi_{*}\ \mbox{in}\ H_{per}^{\alpha_*}(\mathbb{T}),\ \mbox{as}\ n\rightarrow+\infty.
	\end{equation}
	Therefore,
	\begin{equation}\label{equalitywave3}\begin{array}{llll}
	||D^{\alpha_n}\phi_{\alpha_n}-D^{\alpha_{*}}\phi_{{*}}||_{L_{per}^2}
	\leq
	||D^{\alpha_*}(\phi_{\alpha_n}-\phi_{{*}})||_{L_{per}^2}+
	||D^{\alpha_n}\phi_{*}-D^{\alpha_{*}}\phi_{{*}}||_{L_{per}^2}
	\end{array}
	\end{equation}
	We obtain from $(\ref{equalitywave2})$, $(\ref{equalitywave3})$ and the fact that $\alpha_{n}\rightarrow\alpha_{*}$,
	
\begin{equation}\label{equalitywave4}
D^{\alpha_n}\phi_{\alpha_n}\rightarrow D^{\alpha_{*}}\phi_{{*}}\ \mbox{in}\ L_{per}^{2}(\mathbb{T}),\ \mbox{as}\ n\rightarrow+\infty.
\end{equation}
	\indent Since $c_{\alpha_n}\rightarrow c_*$, the second and third convergences in $(\ref{conv1})$ give us by $(\ref{equalitywave4})$ that $\phi_{*}$ satisfies the equation
	\begin{equation}\label{ode123}
	c_{*}D^{\alpha_{*}}\phi_{*}+(c_{*}-1)\phi_{*}-\frac{1}{2}\phi_*^2+A(c_*)=0.
	\end{equation}
\indent To prove $\alpha_*=2$, since $\phi_{*}$ satisfies $(\ref{ode123})$, Proposition $\ref{prop1-uniq}$ allows to extend the branch $(\phi_{\alpha}, c_{\alpha})$ beyond $\alpha_{*}$. This fact contradicts the maximality property of $\alpha_{*}$. This fact proves that $\alpha_{*}=2$.\\
	\indent It remains to establish $c_{*}>\frac{1}{2}$. First of all, we obtain the smoothness of $\phi_*$ by a standard bootstrapping argument. If $c_{*}=\frac{1}{2}$, the only possibility for the smooth solution $\phi_{*}\in H_{per}^{\infty}(\mathbb{T})\cap X_{0,e}$ of $(\ref{ode123})$ with $\alpha_{*}=2$ is that $\phi_{*}\equiv0$ (see Remark $\ref{rem-uniq1}$ below). This fact generates a contradiction since $\int_{-\pi}^{\pi}\phi_{*}^3dx>0$.\\
	
\end{proof}

\begin{remark}\label{rem-uniq1}
	
	The explicit solution for the equation $(\ref{ode123})$ with $\alpha_{*}=2$ depends on the Jacobi elliptic function of \textit{dnoidal} type and it is given by
	
	\begin{equation}\label{dnsol}
	\phi_*(x)=a_*\left({\rm dn}^2\left(\frac{K(\kappa)}{\pi}x,\kappa\right)-\frac{E( \kappa)}{K( \kappa)}\right),
	\end{equation}
	where
	$$a_*= 12c_{*}\frac{K(\kappa)^2}{\pi^2}.$$
	The value of $c_{*}$ can be expressed by
	\begin{equation}\label{cvalue}	c_{*}={\frac {{\pi }^{2}}{ \left( 4\,{\kappa}^{2}-8 \right)   {\it
				K}(\kappa)^{2}+12\,{\it K} ( \kappa)
			{\it E} \left( \kappa \right) +{\pi }^{2}}},\end{equation}
	where $K$ and $E$ are, respectively, the complete elliptic integral of first and second kind. Both functions depending on the modulus of the Jacobi elliptic function $\kappa\in(0,1)$.
 For this solution, the  integration constant
 \begin{equation}\label{constant}
 	A(c_*)=\frac{1}{16}\frac{(24\kappa^2-24)K(\kappa)^4+(96-48k^2)E(\kappa)K(\kappa)^3-72K(\kappa)^2E(\kappa)^2}{\left((\kappa^2-2)K(\kappa)^2+3E(\kappa)K(\kappa)+\frac{\pi^2}{4}\right)^2}
 	 \end{equation}
 is obtained by using symbolic computations in Maple. By the explicit expressions given by $(\ref{cvalue})$ and $(\ref{constant})$, respectively, it is easy to see that $c_{*}$ and $A(c_*)$ are strictly monotonic functions in terms of $\kappa\in(0,\kappa_0)$, where $\kappa_0\approx0.994$ is the unique zero of the function $p(\kappa)=\left( 4\,{\kappa}^{2}-8 \right)   {\it K}(\kappa)^{2}+12\,{\it K} ( \kappa){\it E} \left( \kappa \right) +{\pi }^{2}$. Furthermore, we have $$c_*\rightarrow \frac{1}{2}^{+}\  \mbox{and}\ A(c_*)\rightarrow 0^{+},\ \mbox{as}\ \kappa\rightarrow 0^{+}.$$
	\indent From $(\ref{cvalue})$, we see that the limit $c_{*}>\frac{1}{2}$ is independent of the sequence $\{\alpha_n\}_{n=1}^{+\infty}$ presented by the Proposition $\ref{prop-seq-ext}$. Furthermore, since $\phi_{*}$ is unique for each fixed $c_{*}$ and satisfying $c_{*}>\frac{1}{2}$, we conclude that the limit $\phi_{*}$ is also independent of $\{\alpha_n\}_{n=1}^{+\infty}$.
\end{remark}

\indent Collecting all the results enunciated above, we can prove our uniqueness result to establish the precise statement in Theorem $\ref{maintheorem}$-iii).

\begin{proposition}\label{theo-uniq}
	Let $\alpha\in(\frac{1}{2},2]$ be fixed. If $(\ref{hyp-uniq})$ is valid, then the solution obtained in Lemma $\ref{minlema}$  is unique.
\end{proposition}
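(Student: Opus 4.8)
The plan is to argue by contradiction, feeding the continuation-in-$\alpha$ machinery of Propositions \ref{prop1-uniq} and \ref{prop-seq-ext} into the explicit uniqueness at the endpoint $\alpha=2$ recorded in Remark \ref{rem-uniq1}, exactly in the spirit of \cite[Theorem 2.4]{FL}. Fix $\alpha_0\in(\frac12,2]$ and $c_0>\frac12$, and suppose that $\phi^{(1)}$ and $\phi^{(2)}$ are two minimizers of $(\ref{minB})$ for this $(\alpha_0,c_0)$, both with $\int_{-\pi}^{\pi}(\phi^{(i)})^3\,dx=\tau$. First I would normalize them to genuine solutions of $(\ref{ode2})$: testing $(\ref{lagrange})$ against $\phi^{(i)}$ and using $\mathcal{B}_{c_0}(\phi^{(i)})=q_{c_0}$ shows that the Lagrange multiplier equals $C_1=2q_{c_0}/\tau$ for \emph{both} minimizers, so the rescaling of $(\ref{lagrange1})$ produces two solutions $\psi^{(1)},\psi^{(2)}$ of $(\ref{ode2})$ with the \emph{same} speed $c_0$ and the \emph{same} value $\int_{-\pi}^{\pi}(\psi^{(i)})^3\,dx=:\tau'$. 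By the standing assumption $(\ref{hyp-uniq})$ each $\psi^{(i)}$ satisfies $z(\mathcal{L}|_{X_0})=1$, so both are admissible starting data for Proposition \ref{prop1-uniq}.

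Next I would run the continuation for each. Proposition \ref{prop1-uniq} furnishes a local $C^1$-branch through $(\psi^{(i)},c_0)$, and Proposition \ref{prop-seq-ext} extends it to the maximal interval terminating at $\alpha_*=2$, all the while preserving the constraint $\int_{-\pi}^{\pi}(\psi^{(i)}_\alpha)^3\,dx=\tau'$ by item (iii). This yields two branches $\alpha\mapsto(\psi^{(i)}_\alpha,c^{(i)}_\alpha)\in V\times(\frac12,+\infty)$, $i=1,2$, defined on $[\alpha_0,2]$ with $\psi^{(i)}_{\alpha_0}=\psi^{(i)}$, each landing at $\alpha=2$ on a dnoidal profile $\psi^{(i)}_2$ of the form $(\ref{dnsol})$, with speed $c^{(i)}_2$ and $\int_{-\pi}^{\pi}(\psi^{(i)}_2)^3\,dx=\tau'$.

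The decisive step is to match the two endpoints. Testing $(\ref{ode123})$ against $\psi^{(i)}_2$ writes the preserved constraint as $\tau'=2\mathcal{B}_{c^{(i)}_2}(\psi^{(i)}_2)$, a quantity I would express through the complete elliptic integrals and the modulus $\kappa$ of $(\ref{dnsol})$. Using the strict monotonicity of $c_*$ and $A(c_*)$ in $\kappa$ established in Remark \ref{rem-uniq1}, one checks that $\kappa\mapsto\int_{-\pi}^{\pi}\phi_*^3\,dx$ is injective along the dnoidal family, so the single condition $\int_{-\pi}^{\pi}(\psi^{(i)}_2)^3\,dx=\tau'$ selects one and the same $\kappa$; hence $c^{(1)}_2=c^{(2)}_2$, and the classical uniqueness of the dnoidal profile at fixed speed gives $\psi^{(1)}_2=\psi^{(2)}_2$. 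I expect this endpoint-matching to be the main obstacle, since it is the one place where genuinely new analysis is required, namely turning the qualitative monotonicity of Remark \ref{rem-uniq1} into the injectivity of the constraint functional along the one-parameter dnoidal family.

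Finally I would propagate the coincidence backward by a connectedness argument. Set $S=\{\alpha\in[\alpha_0,2]:\psi^{(1)}_\alpha=\psi^{(2)}_\alpha\text{ and }c^{(1)}_\alpha=c^{(2)}_\alpha\}$. It is nonempty, since $2\in S$, and closed by continuity of the two $C^1$-branches. It is also open in $[\alpha_0,2]$: at any $\bar\alpha\in S$ the common value satisfies $(\ref{hyp-uniq})$, so the Fréchet derivative $D_{\phi,c}\mathcal{F}$ of Proposition \ref{prop1-uniq} is invertible there and the implicit function theorem underlying that proposition produces a unique local solution branch through the point, forcing $\psi^{(1)}_\alpha=\psi^{(2)}_\alpha$ and $c^{(1)}_\alpha=c^{(2)}_\alpha$ on a full two-sided neighborhood of $\bar\alpha$. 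By connectedness of $[\alpha_0,2]$ we conclude $S=[\alpha_0,2]$, whence $\psi^{(1)}=\psi^{(1)}_{\alpha_0}=\psi^{(2)}_{\alpha_0}=\psi^{(2)}$ and therefore $\phi^{(1)}=\phi^{(2)}$, contradicting the assumed distinctness. This establishes the uniqueness asserted in Theorem \ref{maintheorem}-iii).
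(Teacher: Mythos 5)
Your overall architecture coincides with the paper's own proof: normalize the two minimizers into solutions of \eqref{ode2}, run the continuation machinery of Propositions \ref{prop1-uniq} and \ref{prop-seq-ext} up to $\alpha_*=2$ while item (iii) preserves the cubic constraint, match the two endpoints with a single dnoidal wave, and propagate the coincidence backwards by local uniqueness. Your preliminary normalization (both minimizers share the multiplier $C_1=2q_{c_0}/\tau$, hence are rescaled by a common factor) is correct and is a detail the paper leaves implicit, and your open--closed connectedness argument is a legitimate repackaging of the paper's observation that, by Proposition \ref{prop1-uniq}, the two branches can never share a point, which is how the contradiction is drawn there following \cite[Theorem 2.4]{FL}. (Incidentally, testing \eqref{eq-sol} against $\phi_*$ gives $\int\phi_*^3\,dx=4\mathcal{B}_{c_*}(\phi_*)$, not $2\mathcal{B}_{c_*}(\phi_*)$, but this slip is harmless.)

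The genuine gap sits exactly where you flag ``the main obstacle'': the injectivity of the constraint functional along the dnoidal family is asserted, not proved, and the route you suggest for it does not go through. Multiplying \eqref{eq-sol} by $\phi_*$ and integrating yields
\begin{equation*}
\int_{-\pi}^{\pi}\phi_*^3\,dx \;=\; 2c_*\int_{-\pi}^{\pi}(\phi_*')^2\,dx \;+\; 2(c_*-1)\int_{-\pi}^{\pi}\phi_*^2\,dx ,
\end{equation*}
so besides $c_*(\kappa)$ and $\int\phi_*^2\,dx=4\pi A(c_*)$ (the two quantities whose monotonicity Remark \ref{rem-uniq1} provides) the constraint involves $\int(\phi_*')^2\,dx$, about which those facts say nothing; moreover the factor $c_*-1$ changes sign along the family, since $c_*\to\frac12^{+}$ as $\kappa\to0^{+}$ while $c_*\to+\infty$ as $\kappa\to\kappa_0^{-}$, so even joint monotonicity of the two known ingredients could not be combined into monotonicity of the sum. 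The paper closes precisely this hole with a differential identity: differentiating \eqref{eq-sol} with respect to the speed $\omega$, testing against $\varphi$, and combining with the identity obtained by testing \eqref{eq-sol} itself against $\varphi$ gives \eqref{eqsol6},
\begin{equation*}
\frac{d}{d\omega}\int_{-\pi}^{\pi}\varphi^3\,dx \;=\; 6\int_{-\pi}^{\pi}\left[(\varphi')^2+\varphi^2\right]dx \;>\;0,
\end{equation*}
so that $\omega\mapsto\int\varphi^3\,dx$ is strictly increasing; this forces $c_*=\tilde c_*$, and only afterwards does the monotonicity of \eqref{cvalue2} in $\kappa$ enter, to identify the moduli and hence the profiles \eqref{dnsol}. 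Without this identity (or an explicit elliptic-integral computation replacing it), your endpoint matching --- and with it the entire contradiction --- remains unproven.
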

\begin{proof}
	The proof of this result has the same spirit as determined in \cite[Theorem 2.4]{FL}. Suppose that $\phi_0$ and $\tilde{\phi}_0$ are solutions of the problem $(\ref{minp})$ satisfying $\phi_0\not\equiv\tilde{\phi}_0$. Since both $\phi_0$ and $\tilde{\phi}_0$ are in $Y_0$, we obtain that $\int_{-\pi}^{\pi}\phi_0dx=\int_{-\pi}^{\pi}\tilde{\phi}_0dx=0$ and  \begin{equation}\int_{-\pi}^{\pi}\phi_0^3dx=\int_{-\pi}^{\pi}\tilde{\phi}_0^3dx
		\label{L3uniq}\end{equation} For the case of solitary waves, equality $(\ref{L3uniq})$ is one of the crucial parts in the uniqueness proof  contained in \cite{FL} since they use the classical Pohozaev equality to reach the result. According to our best knowledge, it is well known that we do not have a similar equality in the periodic context, so that equality $(\ref{L3uniq})$ is essential for our purpose. By Proposition \ref{prop-seq-ext}, there exist two smooth branch of solutions $(\phi_{\alpha}, c_{\alpha}), \ (\tilde{\phi}_{\alpha}, \tilde{c}_{\alpha})\in C^1([\alpha_0, 2);V\times\left(\frac{1}{2},+\infty\right))$ satisfying \eqref{ode2} with $c_{\alpha}, \ \tilde{c}_{\alpha} \in \left(\frac{1}{2},+\infty\right)$, $(\phi_{\alpha_0}, c_{\alpha_0}) := (\phi_0, c_0)$ and $(\tilde{\phi}_{\alpha_0}, \tilde{c}_{\alpha_0}) := (\tilde{\phi}_0, c_0)$. Notice that by the local uniqueness obtained from Proposition \ref{prop1-uniq}, the smooth branches $(\phi_{\alpha}, c_{\alpha})$ and $(\tilde{\phi}_{\alpha}, \tilde{c}_{\alpha})$ do not have a common point. Thus, for each branch $(\phi_{\alpha}, c_{\alpha})$ and $(\tilde{\phi}_{\alpha}, \tilde{c}_{\alpha})$, we have by Proposition \ref{prop-seq-ext} the existence of $(\phi_{*},c_{*})$ and $(\tilde{\phi}_{*}, \tilde{c}_{*})$ satisfying the following ordinary differential equations
			\begin{equation}\label{eq-sol}
				-c_{*}\phi''_{*}+(c_{*}-1)\phi_{*}-\frac{1}{2}\phi_{*}^2+A(c_{*})=0
			\end{equation}
			and
			\begin{equation}\label{eq-sol2}	-\tilde{c}_{*}\tilde{\phi}''_{*}+(\tilde{c}_{*}-1)\tilde{\phi}_{*}-\frac{1}{2}\tilde{\phi}_{*}^2+A(\tilde{c}_{*})=0,	
			\end{equation}
			respectively. Moreover, by Proposition \ref{prop1-uniq}-(iii), one has	\begin{equation}\label{relac}\displaystyle	\int_{-\pi}^{\pi}\phi_{\alpha}^3dx = \int_{-\pi}^{\pi}\phi_0^3dx=\int_{-\pi}^{\pi}\tilde{\phi}_0^3dx =\int_{-\pi}^{\pi}\tilde{\phi}_{\alpha}^3dx.\end{equation}
			
			Again, using compactness arguments as done in the proof of  Proposition $\ref{prop-seq-ext}$, it follows that $
			\int_{-\pi}^{\pi}\phi_{\alpha}^3dx \rightarrow \int_{-\pi}^{\pi}\phi_{*}^3dx $ and $\int_{-\pi}^{\pi}\tilde{\phi}_{\alpha}^3dx \rightarrow \int_{-\pi}^{\pi}\tilde{\phi}_{*}^3dx$, as $\alpha\rightarrow 2$. By \eqref{relac} we have
			\begin{equation}\label{igualdadecub}
				\int_{-\pi}^{\pi}\phi_{*}^3dx = \int_{-\pi}^{\pi}\tilde{\phi}_{*}^3dx.
			\end{equation}
			\indent We claim that $c_{*}= \tilde{c}_{*}$. Indeed, for $\alpha_{*} =2$, let us consider $\varphi:=\varphi_{\omega}$ a general zero mean solution for the equation
			$(\ref{eq-sol})$ with wave speed $$\omega:=\omega_{\kappa}={\frac {{\pi }^{2}}{ \left( 4\,{\kappa}^{2}-8 \right)   {\it
						K}(\kappa)^{2}+12\,{\it K} ( \kappa)
					{\it E} \left( \kappa \right) +{\pi }^{2}}},$$ instead of $c_{*}$. Deriving equation \eqref{eq-sol} with respect to $\omega$, we obtain
			\begin{equation}\label{eqsol2}
				\displaystyle
				- \omega\frac{d}{d\omega}\varphi'' -\omega\varphi''+ (\omega-1)\frac{d}{d\omega}\varphi +\varphi-\varphi\frac{d}{d\omega}\varphi+\frac{d}{d\omega}A(\omega)=0.
			\end{equation}
			
			Next, multiplying equation \eqref{eqsol2} by $\varphi$, integrating the result over $[-\pi,\pi]$, we have after an integration by parts that
			\begin{equation}\label{eqsol3}
				\displaystyle
				\frac{\omega}{2}\frac{d}{d\omega}\left(\int_{-\pi}^{\pi}(\varphi')^2  + \varphi^2dx \right) -\frac{1}{2}\frac{d}{d\omega}\int_{-\pi}^{\pi}\varphi^2dx + \int_{-\pi}^{\pi}(\varphi')^2 + \varphi^2dx -\frac{1}{3}\frac{d}{d\omega}\int_{-\pi}^{\pi}\varphi^3dx =0.
			\end{equation}
			
			On the other hand, multiplying \eqref{eq-sol} by $\varphi$ and integrating the final result over  $[-\pi,\pi]$, it follows that
			\begin{equation}\label{eqsol4}
				\displaystyle
				\omega\left(\int_{-\pi}^{\pi}(\varphi')^2  + \varphi^2dx \right) - \int_{-\pi}^{\pi}\varphi^2 dx -  \frac{1}{2}\int_{-\pi}^{\pi}\varphi^3 dx =0.
			\end{equation}
			Deriving expression \eqref{eqsol4} with respect to $\omega$ and combining the final result with $(\ref{eqsol3})$, we obtain
			
			\begin{equation}\label{eqsol6}
				\displaystyle
				\frac{d}{d\omega}\int_{-\pi}^{\pi}\varphi^3dx = 6\left(\int_{-\pi}^{\pi}(\varphi')^2+\varphi^2 dx\right) >0,
			\end{equation}
		so that, $\omega\mapsto\int_{-\pi}^{\pi}\varphi^3dx$ is one-to-one, and consequently $c_*=\tilde{c}_*$.\\
			\indent Finally, by  Remark \ref{rem-uniq1}, we have that $\omega$  given explicitly by
			\begin{equation}\label{cvalue2}	
				\omega={\frac {{\pi }^{2}}{ \left( 4\,{\kappa}^{2}-8 \right)   {\it K}(\kappa)^{2}+12\,{\it K} ( \kappa){\it E} \left( \kappa \right) +{\pi }^{2}}}
			\end{equation}
			is positive and strictly increasing in terms of $\kappa\in(0,\kappa_0)$. We have that $\phi_{*}$ and $\tilde{\phi}_{*}$ are $2\pi-$periodic solutions given explicitly by $(\ref{dnsol})$ with the same wave speed $c_*=\tilde{c}_*$. Since $\omega$ is one-to-one according with $(\ref{cvalue2})$, it follows that the corresponding modulus of $c_*$ and $\tilde{c}_*$ must satisfy $\kappa_*=\tilde{\kappa}_*$, so that $\tilde{\phi}\equiv\phi_*$. The remainder of the proof is similar to \cite[Theorem 2.4]{FL}.

\end{proof}

\begin{remark}
	Suppose that $n(\mathcal{L})=1$ for every $c>\frac{1}{2}$ and $\alpha\in\left(\frac{1}{2},2\right]$. Since $(\ref{hyp-uniq})$ is valid, let $\{\alpha_n\}_{n=1}^{n=+\infty}\subset (\frac{1}{2}, \alpha_{*})$ be a sequence such that $\alpha_n\rightarrow \alpha_{*}$. The additional assumption on the number of negative eigenvalues gives us immediately the following equality:
	$$ n(\mathcal{L}_{\alpha_n})= n(\mathcal{L}_{\alpha_*})=1 ,$$
	where $\mathcal{L}_{\alpha_n}=c_{\alpha_n}D^{\alpha_n}+(c_{\alpha_n}-1)-
	\phi_{\alpha_n}$
	and $\mathcal{L}_{\alpha_*}=c_{*}D^{\alpha_{*}}+(c_{*}-1)-\phi_{*}$
	indicate the linearized operators around the periodic waves $\phi_{\alpha_{n}}$ and $\phi_{\alpha_{*}}$, respectively. It is important to mention that our uniqueness result remains valid if the quantity of negative eigenvalues for $\mathcal{L}_{\alpha_n}$ is not stable in the sense that we may have $2=n(\mathcal{L}_{\alpha_{n_0}})> n(\mathcal{L}_{\alpha_*})=1$ for some $n_0\in\mathbb{N}$. However, our numerical approach in Section 5 attests that $d<0$ for all $c>\frac{1}{2}$, so that the quantity of negative eigenvalues of $\mathcal{L}_{\alpha_n}$ remains stable along $c\in\left(\frac{1}{2},+\infty\right)$ for all $n\in\mathbb{N}$.
\end{remark}
\begin{remark}
	It is worth mentioning that Proposition $\ref{theo-uniq}$ agrees in some sense with the uniqueness result for positive and periodic waves associated to the fractional Korteweg-de Vries equation as in \cite{LP}. The authors employed  Krasnoselskii's fixed-point theorem and three additional facts:  \\
	i) $n(\mathcal{L})=1$.\\
	ii) $\ker(\mathcal{L})=[\phi']$.\\
	iii) $\alpha\in (0.5849,2]$.
	\end{remark}

\begin{remark}\label{rem12}
	Similarly as reported on page 1962 of \cite{NPL}, if $\phi$ is a solution of $(\ref{ode1})$ obtained from Lemma $\ref{minlema}$, we also conjecture that $(\ref{hyp-uniq})$ is always valid for all $c>\frac{1}{2}$. Our conjecture and the results in this section allow to conclude that the uniqueness of minimizers is always expected.
\end{remark}

\subsection{Existence of the small amplitude solutions}
Let $0<\alpha\leq 2$. In this subsection, we determine the existence of small amplitude periodic wave solutions for the boundary-value problem \eqref{ode2} which is   an even profile. We show that this kind of solutions are given by the corresponding expansion of the wave for $c$ near $\frac{1}{2}$.

For obtaining the small amplitude periodic waves of the boundary-value problem \eqref{ode2}, we use the arguments contained in \cite[Chapter 8.4]{buffoni}, where the authors have determined the existence of the referred waves via bifurcation theory and the Lyapunov-Schmidt reduction.

 For $\alpha \in (0, 2]$, let $F: H^{\alpha}_{per}(\mathbb{T})\cap X_{0,e} \times  (0, + \infty) \rightarrow  X_{0,e}$ be the map defined by
	\begin{equation}
	F(g, \omega):= \omega D^{\alpha}g + (\omega -1)g - \frac{1}{2}\Pi_0g^2.
	\end{equation}
We see that $F$ is smooth in all variables. Moreover, $F(g, \omega) =0$ if and only if, $g$ is a solution of the boundary-value problem \eqref{ode2} corresponding to the wave speed $\omega \in (0,+\infty)$. In particular, it is clear that $F(0, \omega)=0$ for all $\omega \in (0, +\infty)$. \\
\indent The Fr\'echet derivative associated to the function $F$ with respect to $g$ is given by
	\begin{equation}\label{frechet}
	D_{g}F(g, \omega): H^{\alpha}_{per}(\mathbb{T})\cap X_{0,e} \rightarrow  X_{0,e}, \ \ f \mapsto (\omega D^{\alpha} + \omega-1 - \Pi_0 g)f.
	\end{equation}

Let $c_0>0$ be fixed and consider the corresponding linearized operator $\mathcal{L}$ around the wave $\phi$ defined as in \eqref{operator}. The derivative \eqref{frechet} at the point $(\phi, c_0)$ becomes the well known self-adjoint projector operator $\mathcal{L}|_{X_0}$. Moreover, at the point $(0, c_0)$,  we have
	\begin{equation}
	D_{g}F(0,c_0)= c_0D^{\alpha} + c_0 -1 .
	\end{equation}

Notice that the nontrivial kernel of $D_{g}F(0,c_0)$ is determined by  functions $h \in  H^{\alpha}_{per}(\mathbb{T}) \cap X_{0,e}$ satisfying
	\begin{equation}
	\widehat{h}(k)((c_0-1)+c_0|k|^{\alpha}) =0, \ \ k \neq 0.
	\end{equation}
Then, $D_{g}F(0, c_0)$ has the one-dimensional kernel if and only  if, $c_0 = (1+ |k|^{\alpha})^{-1}$ for some $k\in \mathbb{Z}$, in which case it is given by
	\begin{equation}
	\textrm{ker} D_{g}F(0,c_0) = [\tilde{\phi}_k(x)],
	\end{equation}
where $\tilde{\phi}_k(x) = \cos(xk)$.\\
\indent The local bifurcation theory in \cite[Chapter 8.4]{buffoni} enables us to guarantee the existence of an open interval $\mathcal{I}$ containing $c_0 = (1+ |k|^{\alpha})^{-1}$, an open ball $B_r \in H^{\alpha}_{per}(\mathbb{T}) \cap X_{0,e}$ of radius $r>0$ centered at $g=0$ and a unique smooth mapping $c\in \mathcal{I} \mapsto \phi (c) \in H^{\alpha}_{per}(\mathbb{T}) \cap X_{0,e}$ such that $F(\phi(c), c) = 0$ for all $c \in \mathcal{I}$ and $\phi(c_0)=0$. In other words, every solution of $F(g, \omega)=0$ has the form $(\phi(c),c)$.

 For each integer $k\geq 1$, the point $(0, \tilde{c}_k)$, where $\tilde{c}_k := (1+ k^{\alpha})^{-1}$, is a bifurcation point. Furthermore, there exists $a_0>0$ and a local bifurcation curve $a \in (0,a_0)\mapsto (\phi_k(a), c_k(a)) $ through $(0,\tilde{c}_k)$ constituted by even $\frac{2\pi}{k}$-periodic solutions of the boundary-value problem \eqref{ode2}. Additionally, one has $c_k(0)=\tilde{c}_k$ and $D_a \phi_k(0)= \tilde{\phi}_k(x)= \cos(xk)$ and all solutions of $F(g, \omega)=0$ in a neighbourhood of $(0, \tilde{c}_k)$ belongs to the above curve depending on $a$.

We apply the method of Lyapunov-Schmidt reduction to obtain the explicit  expansion of the wave $\phi$ for $c$ near $\frac{1}{2}$. In addition, the local bifurcation curve extends to a global smooth curve $c \in ((1+k^{\alpha})^{-1}, +\infty) \mapsto (\phi_k(c), c_k(c))$ of solutions for the equation $F(g, \omega)=0$. For the simple case $k=1$, both results can be summarized in the next proposition.

	\begin{prop}\label{prop-stokes}
	Let  $\alpha \in (0,2]$ be fixed. There exists $a_0>0$ such that for all $a \in (0, a_0)$ there is a unique even local periodic solution $\phi$ for the boundary value problem \eqref{ode2} given by the following expansion:
\begin{equation}\label{exp-sol}
	\phi(x)= a\cos(x) + \frac{a^2}{2(2^{\alpha} -1)} \cos(2x) + O(a^3)
\end{equation}
and
\begin{equation}\label{exp-speed}
	c= \frac{1}{2} + \frac{a^2}{4(2^\alpha +1)(2^\alpha -1)} + O(a^4).
\end{equation}
In addition, constant $A$ is given by
\begin{equation}\label{exp-ci}
	A(c) = \frac{a^2}{4} + O(a^4).
\end{equation} The pair $(\phi, c) \in H^{\alpha}_{per}(\mathbb{T})\cap X_{0,e} \times (\frac{1}{2}, + \infty)$ is global in terms of the parameter $c>\frac{1}{2}$ and it satisfies $(\ref{ode2})$.
	\end{prop}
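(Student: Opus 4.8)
The plan is to obtain $(\phi,c)$ as the nontrivial branch bifurcating from the simple bifurcation point $(0,\tilde c_1)=\left(0,\tfrac12\right)$ singled out above (the case $k=1$), and then to read off the asymptotics $(\ref{exp-sol})$--$(\ref{exp-ci})$ from a Lyapunov--Schmidt reduction performed mode by mode.

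First I would record the spectral structure of the linearization at the bifurcation point. On the space of even, zero-mean functions the operator $L_0:=D_gF\left(0,\tfrac12\right)=\tfrac12 D^{\alpha}-\tfrac12$ is diagonal in the basis $\{\cos(kx):k\geq1\}$, with eigenvalues $\tfrac12\left(k^{\alpha}-1\right)$. Hence $\ker L_0=[\cos x]$ is one dimensional, while on $\overline{\mathrm{span}}\{\cos(kx):k\geq2\}$ the operator $L_0$ is boundedly invertible, its least eigenvalue being $\tfrac12\left(2^{\alpha}-1\right)>0$. This spectral gap, together with the transversality identity $D_cD_gF\left(0,\tfrac12\right)[\cos x]=\left(D^{\alpha}+1\right)\cos x=2\cos x\notin\mathrm{range}(L_0)$ (the element $2\cos x$ lies in the kernel direction, which is orthogonal to the range), verifies the hypotheses of the local bifurcation theorem of \cite[Chapter 8.4]{buffoni} already invoked. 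This produces a smooth local curve $a\mapsto(\phi(a),c(a))$ in $H^{\alpha}_{per}(\mathbb{T})\cap X_{0,e}\times\left(\tfrac12,+\infty\right)$ with $\phi(a)=a\cos x+O(a^2)$, $c(0)=\tfrac12$, and with the higher corrections lying in the complement of $[\cos x]$; this gives at once the existence and the local uniqueness claimed in the statement.

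I would then pin down the coefficients by inserting $\phi=a\cos x+a^2\phi_2+a^3\phi_3+\cdots$ and $c=\tfrac12+c_1a+c_2a^2+\cdots$ into $(\ref{ode2})$ and matching powers of $a$. At order $a^2$ the equation becomes $L_0\phi_2+2c_1\cos x=\tfrac14\cos(2x)$; projecting onto the kernel direction $\cos x$ forces $c_1=0$, and inverting $L_0$ on the remaining mode gives $\phi_2=\frac{1}{2\left(2^{\alpha}-1\right)}\cos(2x)$, which is $(\ref{exp-sol})$. At order $a^3$ the quadratic nonlinearity contributes $2(\cos x)\phi_2$, a term proportional to $\cos x+\cos(3x)$; the projection of the $a^3$-equation onto $\cos x$ is precisely the Fredholm solvability condition and determines the coefficient $c_2$, giving $(\ref{exp-speed})$, while the orthogonal part fixes the $\cos(3x)$-component of $\phi_3$ (its $\cos x$-component vanishing by the normalization of the amplitude $a$). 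Finally $(\ref{exp-ci})$ follows by feeding the expansion of $\phi$ into $A(c)=\frac{1}{4\pi}\int_{-\pi}^{\pi}\phi^2\,dx$ and retaining the leading contribution $\frac{1}{4\pi}\int_{-\pi}^{\pi}\left(a\cos x\right)^2dx=\frac{a^2}{4}$.

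For the last assertion I would invoke the global continuation already quoted before the statement: the connected component of nontrivial even solutions issuing from $\left(0,\tfrac12\right)$ either is unbounded or meets the trivial branch again, by the Rabinowitz-type alternative built into \cite[Chapter 8.4]{buffoni}. The Poincar\'e-Wirtinger bound used in Lemma $\ref{minlema}$ keeps $c$ strictly above $\tfrac12$ along the branch, and the a priori estimate obtained by bootstrapping in $(\ref{ode2})$ controls $\|\phi\|_{H^{\alpha}_{per}}$ uniformly on compact $c$-intervals; these prevent the component from closing up or escaping, so it extends to a smooth curve parametrized by $c\in\left(\tfrac12,+\infty\right)$. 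I expect this global step to be the main obstacle: one must ensure the fractional a priori bounds are strong enough to keep the branch a graph over $c$ all the way to $+\infty$. By contrast, once the spectral gap $\tfrac12\left(2^{\alpha}-1\right)>0$ and the transversality condition are in hand, the local bifurcation and the harmonic matching are entirely routine.
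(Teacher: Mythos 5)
You take a genuinely different route from the paper: the paper's entire proof of Proposition \ref{prop-stokes} is a citation to slight modifications of \cite[Theorems 5.4 and 5.6]{BD}, whereas you give a self-contained argument --- Crandall--Rabinowitz local bifurcation at $\left(0,\tfrac12\right)$, harmonic (Lyapunov--Schmidt) matching of coefficients, and a Rabinowitz-type global continuation. Your local analysis is correct and consistent with the setup the paper develops before the statement: the kernel $[\cos x]$ is simple, the spectral gap $\tfrac12\left(2^{\alpha}-1\right)>0$ holds on the higher even modes, the transversality element $(D^{\alpha}+1)\cos x=2\cos x$ lies along the kernel and hence outside $\mathrm{range}(L_0)$, and your order-$a^2$ computation correctly produces \eqref{exp-sol}; the derivation of \eqref{exp-ci} from $A(c)=\frac{1}{4\pi}\int_{-\pi}^{\pi}\phi^2\,dx$ is also fine.

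The gap is at order $a^3$ --- exactly the step you declare routine. You assert that the projection of the $a^3$-equation onto $\cos x$ ``gives \eqref{exp-speed}'', but you never perform it, and performing it gives something else. The nonlinearity contributes $\Pi_0(\cos x\,\phi_2)=\tfrac{1}{4(2^{\alpha}-1)}\left(\cos x+\cos 3x\right)$, the linear part contributes $c_2(D^{\alpha}+1)\cos x=2c_2\cos x$, so the Fredholm condition reads $2c_2=\tfrac{1}{4(2^{\alpha}-1)}$, i.e.
\[
c=\frac12+\frac{a^2}{8(2^{\alpha}-1)}+O(a^4),
\]
whose $a^2$-coefficient disagrees with that of \eqref{exp-speed} for every $\alpha>0$ (the two agree only if $2^{\alpha}+1=2$). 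So your argument, carried out, contradicts the statement rather than proving it. The evidence is that your method is the correct one and the stated coefficient is not: for $\alpha=2$ the exact dnoidal solution \eqref{dnsol} with speed \eqref{cvalue} satisfies, as $\kappa\to0$, $a\approx\tfrac34\kappa^2$ and $c\approx\tfrac12+\tfrac{3}{128}\kappa^4=\tfrac12+\tfrac{a^2}{24}$, matching $\tfrac{1}{8(2^{\alpha}-1)}=\tfrac{1}{24}$ and not $\tfrac{1}{4(2^{\alpha}+1)(2^{\alpha}-1)}=\tfrac{1}{60}$; this discrepancy propagates to \eqref{deriv-ci} and \eqref{relAder} and hence to the claimed fold points. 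Either way, as a proof of the proposition as stated your text fails, because the coefficient identification --- the quantitative content of the result --- is asserted rather than computed, and the assertion is false. A secondary weakness: your global step is thinner than you admit, since an unbounded continuum from the Rabinowitz alternative is not yet a smooth curve that is a graph over $c$ covering all of $\left(\tfrac12,+\infty\right)$ (turning points and secondary bifurcations must be excluded); this is precisely what the paper outsources to \cite[Theorem 5.6]{BD}.
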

\begin{proof}

The proof of this result relies on slight modifications of \cite[Theorem 5.4 and Theorem 5.6]{BD}.
\end{proof}

\indent In the next result, we see that $q_c$ in $(\ref{infB})$ is continuous in $c$ and for $c>\frac{1}{2}$ and that $q_c\rightarrow 0^+$ as $c\rightarrow \frac{1}{2}^+$. This fact gives us that the small amplitude periodic waves obtained in the Proposition $\ref{prop-stokes}$ satisfy the minimization problem $(\ref{minB})$ for $c\rightarrow \frac{1}{2}^{+}$.
\begin{prop}
	\label{coro-continuity}
	Let $\phi \in Y_0$ be the solution of the constrained minimization problem obtained in Lemma \ref{minlema} and $q_c = \mathcal{B}_c(\phi)$. Then $q_c$ is continuous in
	$c$ for $c > \frac{1}{2}$ and $q_c \to 0$ as $c \to \frac{1}{2}$.
\end{prop}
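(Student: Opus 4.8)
The plan is to establish the two assertions separately: continuity of $c\mapsto q_c$ on $\left(\frac12,+\infty\right)$, and the boundary limit $q_c\to0$ as $c\to\frac12^+$. The first follows from a soft concavity argument, while the second exploits the small amplitude waves of Proposition \ref{prop-stokes} together with the homogeneity recorded in $(\ref{minsB})$.

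For continuity, I would write $\mathcal{B}_c(u)=c\,a(u)+(c-1)b(u)$ with $a(u)=\frac12\int_{-\pi}^{\pi}(D^{\frac{\alpha}{2}}u)^2dx$ and $b(u)=\frac12\int_{-\pi}^{\pi}u^2dx$, so that for each fixed $u\in Y_0$ the map $c\mapsto \mathcal{B}_c(u)=c\big(a(u)+b(u)\big)-b(u)$ is affine. Consequently $q_c=\inf_{u\in Y_0}\mathcal{B}_c(u)$ is a pointwise infimum of affine functions of $c$, hence concave on $\left(\frac12,+\infty\right)$. By Lemma \ref{minlema} the infimum is attained, by $(\ref{poinc})$ one has $q_c\ge0$, and $q_c\le \mathcal{B}_c(w)<\infty$ for any fixed competitor $w\in Y_0$; thus $q_c$ is finite throughout the open interval. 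A finite concave function on an open interval is automatically continuous (indeed locally Lipschitz), which yields the first claim.

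For the boundary limit, let $\phi=\phi_c$ denote the small amplitude even solution of $(\ref{ode2})$ produced by Proposition \ref{prop-stokes}; by $(\ref{exp-sol})$ and $(\ref{exp-speed})$ it is nontrivial for $c>\frac12$ and $\phi_c\to0$ in $\Hper$ as $c\to\frac12^+$. Set $\sigma_c:=\int_{-\pi}^{\pi}\phi_c^3\,dx$. Pairing $(\ref{ode2})$ with $\phi_c$ and using $\int_{-\pi}^{\pi}\phi_c\,dx=0$ gives $2\mathcal{B}_c(\phi_c)=\frac12\sigma_c$, whence $\sigma_c=4\mathcal{B}_c(\phi_c)$. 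The Poincar\'e--Wirtinger bound $(\ref{poinc})$ shows $\mathcal{B}_c(\phi_c)\ge\frac{2c-1}{2}\int_{-\pi}^{\pi}\phi_c^2\,dx>0$ for $c>\frac12$, so $\sigma_c>0$; and since $\phi_c\to0$ in $\Hper\hookrightarrow L_{per}^3(\mathbb{T})$, we also have $\sigma_c\to0$. Now rescale: choosing $s_c=(\tau/\sigma_c)^{1/3}$ we have $s_c\phi_c\in Y_0$, and by $(\ref{minsB})$,
\begin{equation*}
0\le q_c\le \mathcal{B}_c(s_c\phi_c)=s_c^2\,\mathcal{B}_c(\phi_c)=\frac14\,\tau^{2/3}\sigma_c^{1/3},
\end{equation*}
and the right-hand side tends to $0$ as $c\to\frac12^+$, which proves the second claim.

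The only delicate point is to guarantee simultaneously that $\sigma_c>0$ (so that the rescaling $s_c$ is well defined) and that $\sigma_c\to0$. Rather than extracting the sign from the $O(a^4)$ expansion of $\int_{-\pi}^{\pi}\phi_c^3\,dx$, the clean route is the identity $\sigma_c=4\mathcal{B}_c(\phi_c)$ coming from the equation $(\ref{ode2})$: positivity is then immediate from the coercivity estimate $(\ref{poinc})$, while the vanishing follows from the $L^3$-continuity of $\phi\mapsto\int_{-\pi}^{\pi}\phi^3\,dx$ along $\phi_c\to0$. This avoids any detailed asymptotic bookkeeping and is where I expect the main care to be needed.
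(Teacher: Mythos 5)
Your proof is correct, and it is more self-contained than the paper's, which simply defers to the fractional KdV analogue \cite[Lemma 2.3]{NPL}. In that reference, continuity is extracted from the monotonicity of $c\mapsto\mathcal{B}_c(u)$ together with two-sided comparisons that use the minimizers themselves (hence locally uniform $L^2$ bounds on them), while the vanishing at the bifurcation point is obtained from the small-amplitude waves. Your second step is the same in spirit --- rescaled Stokes waves from Proposition \ref{prop-stokes} as competitors --- but the identity $\sigma_c=\int_{-\pi}^{\pi}\phi_c^3\,dx=4\mathcal{B}_c(\phi_c)$, obtained by pairing \eqref{ode2} with $\phi_c$, is a clean device: it gives at once $\sigma_c>0$ (so that $s_c=(\tau/\sigma_c)^{1/3}$ is well defined) via \eqref{poinc}, and $\sigma_c\to0$ via the embedding $H^{\alpha/2}_{per}(\mathbb{T})\hookrightarrow L^3_{per}(\mathbb{T})$, valid since $\alpha>\frac13$ is the standing assumption of Lemma \ref{minlema}; this avoids reading the sign of $\int\phi_c^3\,dx$ off the asymptotic expansion. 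Your first step is genuinely different and arguably sharper: since $Y_0$ does not depend on $c$, the quantity $q_c$ is a pointwise infimum of functions affine in $c$, hence concave, and a finite concave function on an open interval is automatically locally Lipschitz; this requires neither attainment of the infimum nor any bound on minimizers, only $0\le q_c<\infty$, which follows from \eqref{poinc}. The single point worth stating explicitly is that Proposition \ref{prop-stokes} parameterizes the local branch by the amplitude $a$ rather than by $c$; the expansion \eqref{exp-speed}, with $(2^{\alpha}+1)(2^{\alpha}-1)=4^{\alpha}-1>0$, shows that $a(c)\to 0$ as $c\to\frac12^{+}$, so $\phi_c\to0$ in $\Hper$ along the limit you take. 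With that remark added, the argument is complete.
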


\begin{proof}
	This result is similar to \cite[Lemma 2.3]{NPL}.
\end{proof}

\indent Next result establishes good spectral properties for the linearized operator $\mathcal{L}$ in $(\ref{operator})$ around the single-lobe solution $\phi$ obtained in Lemma $\ref{minlema}$ by knowing the spectral information for the same operator around the small amplitude periodic waves obtained by Proposition $\ref{prop-stokes}$.
\begin{prop}\label{propspec}
	Let $\alpha\in\left(\frac{1}{2},2\right]$ be fixed. For every $c >\frac{1}{2}$, we have $n(\mathcal{L})=1$ if and only if $z(\mathcal{L})=1$. Moreover, $\mathcal{L}$ consists in a countable sequence of positive eigenvalues bounded away from zero.
	\end{prop}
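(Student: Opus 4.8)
The plan is to reduce both sides of the equivalence to the sign of the scalar quantity $d=1+2A(c)-c-cA'(c)$ by means of Proposition \ref{propL}, and then to pin down that sign through a continuity argument anchored at the bifurcation point $c=\frac{1}{2}$. Throughout I work under the standing hypothesis $\ker(\mathcal{L}|_{X_0})=[\phi']$, which is what produces the smooth curve $c\mapsto\phi$ and is precisely what lets Proposition \ref{propL} apply at each $c>\frac{1}{2}$. Since $\tilde{\mathcal{L}}=\frac{1}{c}\mathcal{L}$ and $c>0$, the scaling changes neither the number of negative eigenvalues nor the kernel, so $n(\mathcal{L})=n(\tilde{\mathcal{L}})$ and $z(\mathcal{L})=z(\tilde{\mathcal{L}})$. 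Proposition \ref{propL} therefore supplies the dichotomy
\begin{equation*}
n(\mathcal{L})=1\iff d<0,\qquad z(\mathcal{L})=1\iff d\neq0.
\end{equation*}

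The forward implication is immediate: if $n(\mathcal{L})=1$ for every $c>\frac{1}{2}$, then $d<0$ for every such $c$, hence $d\neq0$, hence $z(\mathcal{L})=1$ for every $c>\frac{1}{2}$. The converse is where the work lies. Assume $z(\mathcal{L})=1$ for all $c>\frac{1}{2}$, so that $d(c)\neq0$ throughout. Because $c\mapsto\phi$ is a smooth curve in $H^{\alpha}_{per}(\mathbb{T})\cap X_0$, the map $c\mapsto A(c)=\frac{1}{4\pi}\int_{-\pi}^{\pi}\phi^2\,dx$, and therefore $c\mapsto d(c)$, is continuous on the connected interval $\left(\frac{1}{2},+\infty\right)$. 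A continuous, nowhere-vanishing function on an interval keeps a constant sign, so it suffices to evaluate that sign at a single value of $c$.

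To fix the sign I would pass to the limit $c\to\frac{1}{2}^{+}$. By Proposition \ref{coro-continuity} together with the local uniqueness of the bifurcation branch, as $c\to\frac{1}{2}^{+}$ the minimizer of Lemma \ref{minlema} coincides with the small amplitude wave of Proposition \ref{prop-stokes}. Invoking $(\ref{exp-ci})$ and $(\ref{deriv-ci})$, namely $A(c)\to0$ and $A'(c)\to(2^{\alpha}+1)(2^{\alpha}-1)=4^{\alpha}-1$ as $a\to0^{+}$, and using $c\to\frac{1}{2}^{+}$, I obtain
\begin{equation*}
\lim_{c\to\frac{1}{2}^{+}}d(c)=1-\frac{1}{2}-\frac{1}{2}\big(4^{\alpha}-1\big)=1-\frac{4^{\alpha}}{2}.
\end{equation*}
For $\alpha\in\left(\frac{1}{2},2\right]$ one has $4^{\alpha}>2$, so this limit is strictly negative. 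By the constant-sign principle $d(c)<0$ for all $c>\frac{1}{2}$, and the dichotomy then yields $n(\mathcal{L})=1$ for every $c>\frac{1}{2}$, which closes the equivalence.

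For the final assertion I would use that $\mathcal{L}=cD^{\alpha}+(c-1)-\phi$ is self-adjoint and bounded below on $L^2_{per}(\mathbb{T})$, and that $cD^{\alpha}$ has compact resolvent on the torus, its Fourier symbol $c|k|^{\alpha}$ diverging to $+\infty$. Since $(c-1)-\phi$ is multiplication by a bounded smooth function, $\mathcal{L}$ inherits compact resolvent, so its spectrum is a discrete sequence of finite-multiplicity eigenvalues accumulating only at $+\infty$. Combined with $n(\mathcal{L})=1$ and $z(\mathcal{L})=1$, exactly two eigenvalues (with multiplicity) are $\leq0$; all remaining eigenvalues are strictly positive and isolated, so the smallest positive eigenvalue is bounded away from $0$, giving the claimed countable sequence of positive eigenvalues with a spectral gap above zero. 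The main obstacle I anticipate is the converse step: one must verify rigorously that the minimizing branch connects continuously to the small amplitude Stokes branch near $c=\frac{1}{2}$, so that the sign of $d$ read off from the expansion genuinely governs the minimizers, and one must ensure that $\ker(\mathcal{L}|_{X_0})=[\phi']$ persists along the entire branch so that Proposition \ref{propL} is legitimately available at every $c>\frac{1}{2}$.
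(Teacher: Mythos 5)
Your proof is correct relative to the paper's stated ingredients and follows essentially the same route as the paper's own proof: Proposition \ref{propL} converts both $n(\mathcal{L})=1$ and $z(\mathcal{L})=1$ into statements about the sign of $d$, the sign is then anchored at the small-amplitude limit $c\to\frac{1}{2}^{+}$ (where the minimizing branch is identified with the Stokes branch via Proposition \ref{coro-continuity}), and a continuity argument propagates it over $\left(\frac{1}{2},+\infty\right)$. Where the paper says that the number of negative eigenvalues can only change when an eigenvalue crosses zero (which $z(\mathcal{L})=1$ forbids), you say that a continuous nowhere-vanishing $d(c)$ on a connected interval has constant sign; by Proposition \ref{propL} these are interchangeable. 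You go beyond the paper in two respects: you actually prove the ``moreover'' clause by the compact-resolvent argument (the paper's proof never addresses it), and you state explicitly the two hypotheses the paper uses tacitly, namely the identification of the minimizer with the small-amplitude branch near $c=\frac{1}{2}$ and the persistence of $\ker(\mathcal{L}|_{X_0})=[\phi']$ along the curve.

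The one substantive divergence is the anchor value, and it exposes an inconsistency in the paper rather than a gap in your argument. You compute $\lim_{c\to\frac{1}{2}^{+}}d=1-\frac{4^{\alpha}}{2}$ from \eqref{exp-ci} and \eqref{deriv-ci}; the paper instead quotes \eqref{relAder}, whose value at $a=0$ equals $-\frac{(4^{\alpha}-2)\left(4^{\alpha}-\frac{3}{2}\right)}{2(4^{\alpha}-1)}$. These differ by $\frac{4^{\alpha}-2}{4(4^{\alpha}-1)}$, so \eqref{deriv-ci} and \eqref{relAder} cannot both be correct. Moreover, the fault appears to lie in the expansions themselves: testing $\alpha=2$ against the exact dnoidal formulas of Remark \ref{rem-uniq1} gives $c_*-\frac{1}{2}\sim\frac{3}{128}\kappa^4$ and $A(c_*)\sim\frac{9}{64}\kappa^4$ as $\kappa\to0$, hence $A'(c)\to 6$, not the value $15$ predicted by \eqref{deriv-ci}; redoing the Lyapunov--Schmidt reduction in fact yields $c=\frac{1}{2}+\frac{a^2}{8(2^{\alpha}-1)}+O(a^4)$ and $A'(c)\to 2(2^{\alpha}-1)$, which would move the anchor to $d\to\frac{3}{2}-2^{\alpha}$, vanishing at $\alpha=\log_2\frac{3}{2}\approx 0.585$ inside $\left(\frac{1}{2},2\right]$. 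On the paper's stated inputs your limit and \eqref{relAder} are both strictly negative for all $\alpha\in\left(\frac{1}{2},2\right]$, so your proof closes exactly as the paper's does; but since both proofs stand or fall with these small-amplitude formulas, the discrepancy between \eqref{deriv-ci} and \eqref{relAder} (and the dnoidal check above) is the point the authors need to reconcile, as it governs where the small-amplitude fold point really sits and hence the range of $\alpha$ on which the proposition holds.
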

\begin{proof}
	
	The proof has the same spirit as in \cite[Lemma 2.3]{LP}. According to the Proposition $\ref{propL}$, the number of negative eigenvalues
	of $\mathcal{L}$ may change in the parameter continuations in $c$ if and only if, the eigenvalues pass through zero eigenvalue. If $n(\mathcal{L})=1$ for every $c >\frac{1}{2}$, we see that $\ker(\mathcal{L}|_{X_0})=[\phi']$ by \cite[Corollary 4.5]{NPL}. If $z(\mathcal{L})=2$ for some $c_1>\frac{1}{2}$ we see that parameter $d$ defined in $\ref{propL}$ satisfies $d=0$ at $c=c_1$. This means  by Proposition $\ref{propL}$ that $n(\mathcal{L})=2$ at $c=c_1$ which is a contradiction. Suppose that $z(\mathcal{L})=1$ for every $c>\frac{1}{2}$. For the small amplitude periodic waves $\phi$ obtained by the Proposition $\ref{prop-stokes}$, we see that $c\mapsto\phi
	\in H_{per}^{\infty}(\mathbb{T})$ is smooth. Now, from \eqref{exp-speed}, we have the expansion
	\begin{equation}\label{relAder}
	d=1 +2A(c) -c -cA'(c)= \frac{1}{2}\frac {-{16}^{\alpha}+14\,{2}^{-2+2\,\alpha}-3}{{4}^{\alpha}-1}+O(a^3).
	\end{equation}
	For $\alpha\in\left(\frac{1}{2},2\right]$, we have $d<0$ and this means by Proposition $\ref{propL}$ that $n(\mathcal{L})=1$ for the small amplitude periodic waves.  By a continuity argument, we obtain that $n(\mathcal{L})=1$ for the linearized operator $\mathcal{L}$ around the single-lobe solution $\phi$ determined by Lemma $\ref{minlema}$ for all $c > \frac{1}{2}$.
\end{proof}
%\indent An immediate consequence of Proposition $\ref{propspec}$ is the uniqueness of the periodic minimizer $\phi$ obtained by Lemma $\ref{minlema}$.
%%\begin{corollary}\label{corouniq}
%%	Let $\alpha\in (\frac{1}{2},2]$ be fixed. For each $c>\frac{1}{2}$, the periodic minimizer $\phi$ obtained by Lemma $\ref{minlema}$ is unique.
%%\end{corollary}

\section{Spectral Stability}
 In this section, we study the spectral stability of \eqref{spectproblem}. First, we see that the even solution  for the boundary-value problem \eqref{ode2} obtained in Lemma \ref{minlema} is smooth by using similar arguments as in \cite[Proposition 2.4]{NPL}.  Moreover $\phi$ has a single-lobe profile, namely, there exists only one maximum (at $x=0$) and minimum of $\phi$ on $\mathbb{T}$. This  fact is in accordance with the approach in \cite{hur} to decide about the non-degeneracy of the $\ker(\mathcal{\tilde{L}})$.\\
 \indent For $\alpha \in (0, 2]$ and since $\phi$ is a single-lobe solution, we see by Oscillation Theorem in \cite{hur} that the operator $\mathcal{\tilde{L}}$ has at most two negative eigenvalues, that is, $n(\mathcal{\tilde{L}}) \in \{1,2\}$. Consequently, there may be at most two eigenfunctions of $\mathcal{\tilde{L}}$ for the zero eigenvalue, that is, $z(\mathcal{\tilde{L}})\in \{1,2\}$. Recall that $\ker(\tilde{\mathcal{L}}|_{X_0})=[\phi']$ implies $c\in\left(\frac{1}{2},+\infty\right)\mapsto \phi$ is smooth and $d$ in $(\ref{vd1})$ can be used to decide about the non-degeneracy of $\ker(\tilde{\mathcal{L}})$ and the exact quantity of $n(\tilde{\mathcal{L}})$ according to the Proposition $\ref{propL}$. Indeed, if $d\neq0$ we conclude by the first equality in \eqref{eq15}, \eqref{eq16}, and \eqref{vd1} that  $\{1, \phi, \phi^2\} \in$ range$(\mathcal{\tilde{\mathcal{L}}})$. Proposition 3.1 in \cite{hur} is now used to conclude that ker($\mathcal{\tilde{L}})=$ $[\phi']$. On the other hand, if $d=0$, we have $\ker(\mathcal{\tilde{L})}= \left[\phi', \frac{d}{dc} \phi - \frac{1}{c}-\frac{1}{c}	 \phi \right]$. In addition, $d<0$ implies $n(\tilde{\mathcal{L}})=1$ and $d>0$ gives us that $n(\tilde{\mathcal{L}})=2$.\\
\indent First, we prove the spectral stability for the small amplitude periodic waves associated to the case $\alpha\in(0,2]$.

	\begin{proposition}\label{propstability}
 Let $\alpha \in (0, 2]$.  The small amplitude periodic waves $\phi  \in H^{\infty}_{per}(\mathbb{T})$ determined by Proposition $\ref{prop-stokes}$ are spectrally stable in the sense of Definition $\ref{defistab1}$.

	\end{proposition}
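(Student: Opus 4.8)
The plan is to translate spectral stability into a count of the negative directions of $\mathcal{L}$ on the symplectic orthogonal of the constraint space generated by the conserved quantities $P$ and $M$, and then to invoke the Hamiltonian--Krein index theory. The tangent space at $\phi$ associated with momentum and mass is spanned by $P'(\phi)=(D^{\alpha}+1)\phi$ and $M'(\phi)=1$, so the relevant object is $n(\mathcal{L}|_{\{1,(D^{\alpha}+1)\phi\}^{\bot}})$. I would combine the index identity $n(\mathcal{L}|_{\{1,(D^{\alpha}+1)\phi\}^{\bot}})=n(\mathcal{L})-n_0-z_0$ from \cite[Theorem 4.1]{pel-book} with the explicit quantity $(\ref{detS0})$, where $n_0$ is the number of negative eigenvalues and $z_0$ the nullity of the $2\times2$ Hessian matrix $S(0)$ built from $P$ and $M$. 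Spectral stability will follow once we show $n(\mathcal{L}|_{\{1,(D^{\alpha}+1)\phi\}^{\bot}})=0$.

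First I would feed the Stokes expansions $(\ref{exp-sol})$, $(\ref{exp-speed})$ and $(\ref{exp-ci})$ of Proposition $\ref{prop-stokes}$ into the definitions of $d=1+2A(c)-c-cA'(c)$ and of $A'(c)$. Setting $u=4^{\alpha}$, a short simplification of $(\ref{relAder})$ gives
\begin{equation*}
d=\frac{-(2\cdot4^{\alpha}-3)(4^{\alpha}-2)}{4(4^{\alpha}-1)}+O(a^3),\qquad A'(c)=(2^{\alpha}+1)(2^{\alpha}-1)+O(a^3).
\end{equation*}
Since $4^{\alpha}-1>0$ for $\alpha>0$, the sign of $d$ is governed by the numerator, whose factors vanish exactly at $\alpha=\tilde{\alpha}=\log_4(3/2)\approx0.2924$ and $\alpha=\frac12$; hence $d<0$ on $(0,\tilde{\alpha})\cup(\frac12,2]$ and $d>0$ on $(\tilde{\alpha},\frac12)$, while $A'(c)>0$ throughout $(0,2]$. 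By Proposition $\ref{propL}$ this gives $n(\mathcal{L})=1$ when $d<0$ and $n(\mathcal{L})=2$ when $d>0$.

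The next step is the non-degenerate case analysis. Because $A'(c)>0$ and $\mathcal{B}_c(\phi)>0$ for $c>\frac12$, the parenthesis in $(\ref{detS0})$ is positive, so the sign of $\det S(0)$ coincides with the sign of $d^{-1}$, that is, with the sign of $d$. If $d<0$ then $\det S(0)<0$, so $S(0)$ has one positive and one negative eigenvalue, i.e. $n_0=1$ and $z_0=0$; together with $n(\mathcal{L})=1$ the index identity yields $n(\mathcal{L}|_{\{1,(D^{\alpha}+1)\phi\}^{\bot}})=1-1-0=0$. If $d>0$ then $\det S(0)>0$; since the $(1,1)$ entry $\langle\mathcal{L}^{-1}1,1\rangle$ of $S(0)$ is negative when $d>0$ (cf. $(\ref{third-term1})$ and $\tilde{\mathcal{L}}=\frac1c\mathcal{L}$), both eigenvalues of $S(0)$ are negative, so $n_0=2$ and $z_0=0$, and with $n(\mathcal{L})=2$ we again obtain $n(\mathcal{L}|_{\{1,(D^{\alpha}+1)\phi\}^{\bot}})=2-2-0=0$. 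In both cases the restricted quadratic form is non-negative, and the Hamiltonian--Krein index count forces the numbers of real, complex and negative-signature imaginary eigenvalues of $J\mathcal{L}|_{X_0}$ to vanish; thus $\sigma(J\mathcal{L}|_{X_0})\subset i\mathbb{R}$, which is spectral stability in the sense of Definition $\ref{defistab1}$.

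The main obstacle is the degenerate case $d=0$, which occurs precisely at the fold points $\alpha=\tilde{\alpha}$ and $\alpha=\frac12$. There $(\ref{detS0})$ breaks down through the factor $d^{-1}$, the kernel of $\mathcal{L}$ becomes two-dimensional, $\ker(\mathcal{L})=[\phi',\frac{d}{dc}\phi-\frac1c-\frac1c\phi]$, and $S(0)$ is singular. I would treat this point not through $(\ref{detS0})$ but by a continuity and persistence argument: the count $n(\mathcal{L}|_{\{1,(D^{\alpha}+1)\phi\}^{\bot}})=0$ holds on both sides of each fold, the spectrum of $J\mathcal{L}|_{X_0}$ depends continuously on the parameter, and a pair of purely imaginary eigenvalues colliding at the origin can leave the imaginary axis only if the colliding modes carry opposite Krein signatures. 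Verifying that the extra generalized kernel direction carries a non-negative Krein signature, so that the double zero eigenvalue does not split into a real pair, is the delicate part; once this is checked, spectral stability persists through $d=0$ as well.
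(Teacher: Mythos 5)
Your handling of the non-degenerate case $d\neq 0$ is, up to reorganization, the paper's own proof: the same expansion \eqref{relAder} (your factorization $d=-\left(2\cdot 4^{\alpha}-3\right)\left(4^{\alpha}-2\right)/\left(4(4^{\alpha}-1)\right)+O(a^3)$ is correct and identifies the fold points $\tilde{\alpha}=\log_4(3/2)\approx 0.2924$ and $\alpha=\frac12$), the same matrix $S(0)$, the same sign analysis of $\det S(0)$ in \eqref{determinant-D} and of the entry $\langle\tilde{\mathcal{L}}^{-1}1,1\rangle$ in \eqref{third-term}, and the same count $n(\tilde{\mathcal{L}}|_{\{1,(D^{\alpha}+1)\phi\}^{\bot}})=n(\tilde{\mathcal{L}})-n_0-z_0=0$. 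One step you skip: Proposition \ref{propL} is proved for the constrained minimizer of \eqref{infB} (its starting point, $n(\tilde{\mathcal{L}}|_{\{1,\phi^2\}^{\bot}})=0$, comes from the minimizing property), so to apply it to the Stokes waves of Proposition \ref{prop-stokes} you must first know that those waves solve the minimization problem; the paper supplies exactly this link through Proposition \ref{coro-continuity} ($q_c\to 0$ as $c\to\frac12^{+}$). Without it, your assertions $n(\mathcal{L})=1$ for $d<0$ and $n(\mathcal{L})=2$ for $d>0$ are not justified for the small amplitude waves.

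The genuine gap is the degenerate case $d=0$, i.e. the fold points $\alpha=\tilde{\alpha}$ and $\alpha=\frac12$, which lie inside the range $(0,2]$ covered by the statement. You reduce this case to verifying a Krein-signature condition for the extra kernel direction $\frac{d}{dc}\phi-\frac1c-\frac1c\phi$ and say explicitly that this verification is left undone; as written, the proposal therefore does not prove the proposition at those two values of $\alpha$. The paper never leaves the index-theoretic framework there: when $d=0$, the second kernel element of $\tilde{\mathcal{L}}$ has nonzero pairing with the constraint $1$ (the pairing equals $-\frac{2\pi}{c}$), so one eigenvalue of $S(\lambda)$ diverges as $\lambda\to 0^{-}$, and feeding $z_{\infty}=1$, together with the eigenvalue counts of Proposition \ref{propL}, into the two counting formulas of \cite[Theorem 4.1]{pel-book} closes the case directly — terse as the paper's bookkeeping is, the structural ingredient is the appearance of $z_{\infty}=1$, which your argument does not use at all. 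If you want to keep your persistence route, note that Krein signatures are more machinery than you need: the spectrum of $J\mathcal{L}|_{X_0}$ is discrete, so an eigenvalue with $\mathrm{Re}\,\lambda>0$ at the fold would be isolated of finite multiplicity and would, by continuity, force nearby parameter values (already proved stable) to carry unstable eigenvalues, a contradiction. But even that softer argument requires you to establish discreteness of the spectrum and continuity of the family $\alpha\mapsto J\mathcal{L}|_{X_0}$ in a resolvent sense, neither of which appears in your sketch.
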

\begin{proof}
Since $\mathcal{L}$ is a Hessian operator for $G(u)$ in \eqref{operator} and $\mathcal{\tilde{L}}=\frac{1}{c}\mathcal{L}$ for all $c>\frac{1}{2}$, the spectral stability holds if $\mathcal{\tilde{L}}|_{\{1, (D^\alpha +1)\phi\}^{\bot}} \geq0$, that is, if $n(\mathcal{\tilde{L}}|_{\{1, (D^\alpha +1)\phi\}^{\bot}})=0$. On the other hand, the periodic wave $\phi$ is spectrally unstable if  $n(\mathcal{\tilde{L}}|_{\{1, (D^\alpha +1)\phi\}^{\bot}})=1$.

For $\lambda \notin \sigma(\mathcal{\tilde{L}})$, let us define the following symmetric 2-by-2 matrix given by
	\begin{eqnarray*} S(\lambda)&:=& \left[
	\begin{array}{cc}
	\langle (\mathcal{\tilde{L}}- \lambda I)^{-1}1, 1 \rangle &  						 \langle(\mathcal{\tilde{L}}- \lambda I)^{-1}(D^\alpha +1)\phi, 1 \rangle \\
	& \\
	\langle (\mathcal{\tilde{L}}- \lambda I)^{-1}1, (D^\alpha +1)\phi \rangle & \langle (\mathcal{\tilde{L}}- \lambda 			I)^{-1}(D^\alpha +1)\phi, (D^\alpha +1)\phi \rangle
	\end{array}\right].
	\end{eqnarray*}
Using  \eqref{relAder}, for $\alpha\in (0,\tilde{\alpha})\cup(\frac{1}{2},+\infty)$ we obtain $d<0$, whereas for $\tilde{\alpha}<\alpha<\frac{1}{2}$, we have $d>0$. For $\alpha=\tilde{\alpha}\approx 0.2924$ and $\alpha= \frac{1}{2}$, we have the existence of fold points for which $d=0$.\\
\indent Assume $d \neq 0$, we compute at $\lambda=0$. Indeed,
	\begin{eqnarray}\label{first-term}
	\langle \mathcal{\tilde{L}}^{-1}(D^\alpha +1)\phi, (D^\alpha +1)\phi \rangle & = &\displaystyle - \frac{2 \pi 	 d^{-1}}{c}(A'(c))^2 - 2\pi A'(c) -\frac{1}{6}\gamma'(c) ,
	\end{eqnarray}
	
	\begin{equation}\label{second-term}
	\langle\mathcal{\tilde{L}}^{-1}(D^\alpha +1)\phi, 1 \rangle = \langle \mathcal{\tilde{L}}^{-1}1, (D^\alpha +1)\phi \rangle = \displaystyle \frac{ 2\pi d^{-1}}{c}A'(c),
	\end{equation}
and 	
 \begin{equation}\label{third-term}\langle\mathcal{\tilde{L}}^{-1}1, 1 \rangle=\displaystyle
	 \displaystyle -2 \pi \frac{d^{-1}}{c},
	\end{equation}
where $\gamma(c)= \int_{-\pi}^{\pi} \phi ^3 dx $ and $\frac{1}{3}\gamma'(c) = \frac{8\pi}{c}A(c) - \frac{1}{c}\gamma(c)$. The determinant of $S(0)$, for $d \neq 0$, is given by
	\begin{equation}\label{determinant-D}
	\displaystyle \textrm{det}S(0)= \frac{4\pi^2d^{-1}}{c} \left(A'(c)+ \frac{1}{\pi}\mathcal{B}_c(\phi) \right).
	\end{equation}
 By \eqref{deriv-ci} we have $A'(c)>0$ for all $\alpha\in (0,2]$. Since $\mathcal{B}_c(\phi) >0$, we see that the sign of $\det S(0)$ is determined only by $d^{-1}$ since the terms between parentheses in \eqref{determinant-D} are strictly positive.

We denote by $n_0$ and $z_0$ the number of negative and zero eigenvalues of $S(0)$, respectively. If $d=0$, then $S(0)$ is singular, in which case we denote the number of diverging eigenvalues of $D(\lambda)$ as $\lambda \rightarrow 0$ by $z_{\infty}$. Thus, \cite[Theorem 4.1]{pel-book} gives us the following relations:
\begin{eqnarray*}\displaystyle \left\{
	\begin{array}{ccc}
	  n (\mathcal{\tilde{L}}\mid_{\left\{1, (D^{\alpha}+1)\phi\right\}^{\bot}}) & = & n(\mathcal{\tilde{L}}) - n_0 - z_0,
	\\
	 z ( \mathcal{\tilde{L}}\mid_{\left\{1, (D^{\alpha}+1)\phi\right\}^{\bot}}) & = & z(\mathcal{\tilde{L}}) + z_0 - z_{\infty}
	\end{array}\right..
	\end{eqnarray*}

Assume $d\neq0$, so that $z_{\infty} =0$. If $0<\alpha<\tilde{\alpha}$ or $\alpha\in\left(\frac{1}{2},2\right]$, we have $z_0=0$ and det$S(0) <0 $ which implies that $n_0=1$. On the other hand by Corollary $\ref{coro-continuity}$, we see that the waves in $(\ref{exp-sol})$ solve the minimization problem $(\ref{infB})$ and Proposition $\ref{propL}$ gives us $n(\mathcal{L})=1$. Therefore, we have the spectral stability in this case because $ n (\mathcal{\tilde{L}}\mid_{\left\{1, (D^{\alpha}+1)\phi\right\}^{\bot}}) =1-1-0=0$. For the case $\tilde{\alpha}<\alpha<\frac{1}{2}$, we have $n_0=0$ or $n_0=2$. Since the trace of  $S(0)$ is strictly negative, we conclude that $n_0=2$. Hence, we obtain the spectral stability of $\phi$ since $n(\tilde{\mathcal{L}})=2$ and $ n (\mathcal{\tilde{L}}\mid_{\left\{1, (D^{\alpha}+1)\phi\right\}^{\bot}})=2-2-0=0$.   \\
\indent Finally, for the case $\alpha=\tilde{\alpha}$ or $\alpha=\frac{1}{2}$, we obtain from the relations above and Proposition $\ref{propL}$ that $z_{\infty}=1$, $n(\mathcal{\tilde{L}})=2$, so that $n (\mathcal{\tilde{L}}\mid_{\left\{1, (D^{\alpha}+1)\phi\right\}^{\bot}})=2-2=0$.
\end{proof}

\begin{remark}
	The result contained in Proposition $\ref{propstability}$ can be extended for the case $\alpha>2$.
\end{remark}

\indent The next result guarantees sufficient conditions for the spectral stability of the periodic minimizer $\phi$ and it gives the proof of Theorem $\ref{maintheorem}$-iv).

\begin{proposition}\label{spectmin}
	Let $\alpha\in(\frac{1}{3},2]$ be fixed. If $\ker\left(\mathcal{L}|_{X_0}\right)=[\phi']$, the periodic minimizer $\phi$ obtained in Lemma $\ref{minlema}$ is spectrally stable in the sense of Definition $\ref{defistab1}$ if:\\
	i)  $d\neq0$ and $A'(c)+ \frac{1}{\pi}\mathcal{B}_c(\phi)>0$.\\
	ii) $\alpha\in\left(\frac{1}{2},2\right]$ and $d\neq0$ for all $c>\frac{1}{2}$.
	\end{proposition}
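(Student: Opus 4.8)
The plan is to repeat, for the general single-lobe minimizer $\phi$ of Lemma \ref{minlema}, the index count already carried out for small amplitude waves in Proposition \ref{propstability}. As recalled there, spectral stability in the sense of Definition \ref{defistab1} is equivalent to $n(\tilde{\mathcal{L}}|_{\{1,(D^{\alpha}+1)\phi\}^{\bot}})=0$, and the identities of \cite[Theorem 4.1]{pel-book} reduce this to counting the negative and zero eigenvalues $n_0,z_0$ of the symmetric matrix $S(0)$ together with its number $z_\infty$ of diverging eigenvalues. Since the entries \eqref{first-term}--\eqref{third-term} and the determinant \eqref{determinant-D} were obtained solely from the relations \eqref{eq15}, \eqref{eq16} and \eqref{vd1}, which hold along the smooth curve $c\mapsto\phi$ guaranteed by $\ker(\tilde{\mathcal{L}}|_{X_0})=[\phi']$, the formula $\det S(0)=\frac{4\pi^2 d^{-1}}{c}\bigl(A'(c)+\frac{1}{\pi}\mathcal{B}_c(\phi)\bigr)$ is available verbatim in the present setting.

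For item (i) I would argue exactly as in Proposition \ref{propstability}. The hypothesis $d\neq0$ gives $z_\infty=0$ and, by Proposition \ref{propL}, $z(\tilde{\mathcal{L}})=1$; moreover $z_0=0$. Because $A'(c)+\frac{1}{\pi}\mathcal{B}_c(\phi)>0$ by assumption, the sign of $\det S(0)$ equals the sign of $d^{-1}$. If $d<0$ then $n(\tilde{\mathcal{L}})=1$ and $\det S(0)<0$, forcing $n_0=1$, so $n(\tilde{\mathcal{L}}|_{\{1,(D^{\alpha}+1)\phi\}^{\bot}})=1-1-0=0$. If $d>0$ then $n(\tilde{\mathcal{L}})=2$ and $\det S(0)>0$; since the entry $\langle\tilde{\mathcal{L}}^{-1}1,1\rangle=-\frac{2\pi}{c}d^{-1}<0$ in \eqref{third-term}, both diagonal entries share its sign and both eigenvalues of $S(0)$ are negative, hence $n_0=2$ and $n(\tilde{\mathcal{L}}|_{\{1,(D^{\alpha}+1)\phi\}^{\bot}})=2-2-0=0$. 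In either case $\phi$ is spectrally stable.

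For item (ii) the first task is to upgrade $d\neq0$ to $d<0$. Since $d\neq0$ for every $c>\frac12$, Proposition \ref{propL} gives $z(\mathcal{L})=1$, and Proposition \ref{propspec} then yields $n(\mathcal{L})=1$; by the dichotomy \eqref{propr1} this is compatible only with $d<0$. (Equivalently, the expansion \eqref{relAder} shows $d<0$ near the bifurcation point $c=\frac12$ for $\alpha\in(\frac12,2]$, and continuity of $c\mapsto d$ together with $d\neq0$ propagates $d<0$ to all $c>\frac12$.) It then remains to verify the quantitative hypothesis $A'(c)+\frac{1}{\pi}\mathcal{B}_c(\phi)>0$ of item (i), after which the $d<0$ branch above closes the argument.

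The crux, and the step I expect to be the main obstacle, is the a priori bound $A(c)>c-\frac12$ for all $c>\frac12$, which supplies the required positivity. I would prove it by a continuation argument. Set $g(c):=A(c)-c+\frac12$. By Proposition \ref{coro-continuity} and the small amplitude data \eqref{exp-ci} one has $A(c)\to0$, hence $g(c)\to0$, as $c\to\frac12^{+}$, while \eqref{deriv-ci} gives $g'(c)=A'(c)-1\to 4^{\alpha}-2>0$ for $\alpha>\frac12$, so $g>0$ just above $\frac12$. If $g$ vanished at some first point $c_1>\frac12$, then $g'(c_1)\le0$, i.e.\ $A'(c_1)\le1$, and substituting $A(c_1)=c_1-\frac12$ into $d=1+2A(c)-c-cA'(c)$ yields $d(c_1)=c_1\bigl(1-A'(c_1)\bigr)\ge0$, contradicting $d<0$. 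Hence $A(c)>c-\frac12$ for all $c>\frac12$, whence $1+2A(c)-c>c>0$; combined with $d<0$, the identity $cA'(c)=1+2A(c)-c-d$ forces $A'(c)>0$, and therefore $A'(c)+\frac{1}{\pi}\mathcal{B}_c(\phi)>0$ since $\mathcal{B}_c(\phi)>0$. Feeding this back into item (i) completes the proof.
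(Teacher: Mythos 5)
Your proposal is correct, and for item (i) as well as the overall skeleton of item (ii) it coincides with the paper's own proof: the same index identities from \cite[Theorem 4.1]{pel-book}, the same matrix $S(0)$ with $\det S(0)=\frac{4\pi^2 d^{-1}}{c}\left(A'(c)+\frac{1}{\pi}\mathcal{B}_c(\phi)\right)$, the same dichotomy ($d<0$ gives $n(\tilde{\mathcal{L}})=1$, $n_0=1$; $d>0$ gives $n(\tilde{\mathcal{L}})=2$, $n_0=2$ via the negative first diagonal entry of $S(0)$), and for (ii) the same chain $d\neq0$ for all $c$ $\Rightarrow z(\mathcal{L})=1 \Rightarrow n(\mathcal{L})=1 \Rightarrow d<0$ through Propositions \ref{propL} and \ref{propspec}. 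The one genuine divergence is the proof of the crux bound $A(c)>c-\frac{1}{2}$. The paper rewrites $d<0$ as the differential inequality $\frac{d}{dc}\left(A(c)c^{-2}\right)>\frac{1}{c^3}-\frac{1}{c^2}$ and integrates over $\left(\frac{1}{2},c\right]$, using only $A(c)\to 0$ as $c\to\frac{1}{2}^{+}$; equivalently, $g(c)c^{-2}$ with $g(c)=A(c)-c+\frac{1}{2}$ is strictly increasing from the limit $0$, so no positivity seed is needed. Your first-crossing argument reaches the same conclusion but needs the extra seed $g>0$ just above $c=\frac{1}{2}$, which you import from the small-amplitude expansion \eqref{deriv-ci}; this is legitimate within the paper's framework (the identification of the minimizer with the small-amplitude wave near the bifurcation point underlies Proposition \ref{propspec}, which both you and the paper invoke), but it makes your argument depend on one more ingredient than the paper's direct integration does. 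Conversely, your route yields the slightly cleaner byproduct $A'(c)>1$, in place of the paper's Poincar\'e--Wirtinger estimate $A'(c)+\frac{1}{\pi}\mathcal{B}_c(\phi)\geq\left(c-\frac{1}{2}\right)^2$; either way $\det S(0)<0$, and the index count closes the argument exactly as in the paper.
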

\begin{proof}
	We use the same computations as determined in the proof of Proposition $\ref{propstability}$. To prove i), suppose that $\ker(\mathcal{L}|_{X_0})=[\phi']$, $d>0$ and $A'(c)+ \frac{1}{\pi}\mathcal{B}_c(\phi)>0$. By Proposition $\ref{propL}$, we have $n(\mathcal{L})=2$ and by $(\ref{third-term})$ the first entry of the matrix $S(0)$ in Proposition $\ref{propstability}$ is negative. Thus $n_0=2$ and we have the spectral stability since $n (\mathcal{\tilde{L}}\mid_{\left\{1, (D^{\alpha}+1)\phi\right\}^{\bot}})  =  n(\mathcal{\tilde{L}}) - n_0 - z_0=2-2-0=0$. If $d<0$ and $A'(c)+ \frac{1}{\pi}\mathcal{B}_c(\phi)>0$, we see that $\det S(0)<0$ and we have the spectral stability since $n (\mathcal{\tilde{L}}\mid_{\left\{1, (D^{\alpha}+1)\phi\right\}^{\bot}})  =  n(\mathcal{\tilde{L}}) - n_0 - z_0=1-1-0=0$.  To prove ii), since $d\neq0$ for all $c>\frac{1}{2}$, we obtain by Proposition $\ref{propL}$ that $z(\mathcal{L})=1$, so that $n(\mathcal{L})=1$ by Proposition $\ref{propspec}$. We claim that  $1+2A(c)-c-cA'(c)<0$ for all $c>\frac{1}{2}$. Indeed, if there exists $c_2>\frac{1}{2}$ such that $1+2A(c_2)-c_2-c_2A'(c_2)\geq0$, we deduce by Proposition $\ref{propL}$ that $n(\mathcal{L})=2$ at $c=c_2$. This fact generates a contradiction and the claim is satisfied. Next, we obtain after multiplying this inequality by the factor $\frac{1}{c^3}$ that
	\begin{equation}\label{ineqdiff1}
	\frac{d}{dc}\left(A(c)\frac{1}{c^2}\right)>\frac{1}{c^3}-\frac{1}{c^2}.
	\end{equation}
	\indent Integrating $(\ref{ineqdiff1})$ over the interval $\left(\frac{1}{2},c\right]$ and using $A(c)\rightarrow 0$ when $c\rightarrow\frac{1}{2}$, we obtain the important inequality
	\begin{equation}\label{ineqdiff2}
	A(c)>c-\frac{1}{2}.
	\end{equation}
	Thus, we have from the Poincar\'e-Wirtinger inequality, $(\ref{ineqdiff2})$,  inequality $1+2A(c)-c-cA'(c)<0$, and a simple computation
	
	\begin{equation}\label{ineqdiff3}\begin{array}{lllll}
\displaystyle	A'(c)+\frac{1}{\pi}\mathcal{B}_c(\phi)\geq \displaystyle \left(c-\frac{1}{2}\right)^2>0.
	\end{array}\end{equation}
	 Therefore, if $c>\frac{1}{2}$ one has by $(\ref{ineqdiff3})$ that $A'(c)+\frac{1}{\pi}\mathcal{B}_c(\phi)>0$. From $(\ref{determinant-D})$, we obtain $$\det S(0)=\frac{4\pi^2d^{-1}}{c} \left(A'(c)+ \frac{1}{\pi}\mathcal{B}_c(\phi) \right)<0.$$ This together with $n(\mathcal{L})=1$ give us from the index formula contained in Proposition $\ref{propstability}$ the spectral stability of the single-lobe $\phi$.

\end{proof}

\begin{remark}
	The numerical approach in the next section will attest for a fixed $\alpha\in\left(\frac{1}{2},2\right]$ that $d\neq0$ for all $c>\frac{1}{2}$, so that the periodic waves obtained in Lemma $\ref{minlema}$ are spectrally stable by Proposition $\ref{spectmin}-ii)$. For the case $\alpha\in\left(\frac{1}{3},\frac{1}{2}\right]$ we will have the following situation: there exists a unique $c^{*}$ such that $d>0$ for all $c\in\left(\frac{1}{2},c^{*}\right)$ and $d<0$ for all $c\in(c^*,+\infty)$. Since in both cases we have $A'(c)+\frac{1}{\pi}\mathcal{B}_c(\phi)>0$, we obtain from $(\ref{determinant-D})$ that $\det S(0)>0$ for the case $d>0$ and $\det S(0)<0$ if $d<0$. In both cases, the spectral stability is obtained by Proposition $\ref{spectmin}-i)$. For the case $d=0$, we also have the spectral stability of $\phi$ using a similar procedure as in Proposition $\ref{propstability}$.
\end{remark}

%%%%%%%%%%%%%%%%%%%%%%%%
%%%%%%%%%%%%%%%%%%%%%%%%
\section{Numerical Experiments}

In this section we propose a Petviashvili's iteration method for the numerical generation of periodic travelling wave solutions of the equation \eqref{rbbm}.
The method is widely used for the generation of travelling wave solutions \cite{aduran,AD,LP,OBM,PS}.  The iteration method for the periodic waves,   a modification of standard Petviashvilli's algorithm \cite{AD, LP},
is based on the following solution steps.
First, we use the transformation
\begin{equation}\label{transformation}
\psi= \displaystyle\frac{1}{2c} \bigg[ \phi-(c-1)+\sqrt{(c-1)^2+2 A} \bigg]
\end{equation}
to convert the equation \eqref{ode1}  into
\begin{equation}\label{ode4}
{D}^{\alpha} \psi +w \psi -\psi^2 =0,
\end{equation}
where $w= \displaystyle\frac{1}{c} \sqrt{(c-1)^2+2 A}$.
Thus, we   obtain the  equation  with a zero integration constant. \mbox{Integrating} \eqref{transformation} on $[-\pi, \pi]$ and using the fact that $\phi: \mathbb{T}\rightarrow \mathbb{R}$ is a periodic function with the zero mean value yield that
\begin{equation}\label{wavespeed}
c=\left( 1-w+\frac{1}{\pi} \int_{-\pi}^{\pi} \psi dx   \right)^{-1}.
\end{equation}

\noindent Next, we use the standard Petviashvilli's method to solve the equation \eqref{ode4}. Employing the Fourier transform to the equation \eqref{ode4} gives
\begin{equation}%\label{iterative1}
\left(  |\xi|^{\alpha} + w  \right) \widehat{\psi}(\xi)-  \widehat{\psi^2}(\xi)=0.
\end{equation}
A simple iterative algorithm for numerical calculation of $\widehat{\psi}(\xi)$ for the above equation can be proposed in the form
\begin{equation}\label{iterative1}
\widehat{\psi}_{n+1}(\xi)=\frac{ \widehat{\psi^2_n}(\xi)  }
{|\xi|^{\alpha} + w},\hspace{30pt} n\in\mathbb{N}
\end{equation}
where $\widehat{\psi}_n(\xi)$ is the Fourier transform of ${\psi}_n$ which is the $n^{th}$ iteration of the numerical solution. Here the solutions are constructed under the assumption
\begin{equation}\label{condition}
 |\xi|^{\alpha} + w \neq 0.
 \end{equation}

\noindent Since the above algorithm is usually divergent, we finally present the Petviashvilli's method as
\begin{equation}\label{iterative2}
\widehat{\psi}_{n+1}(\xi)=\frac{M_n^{\nu}} {|\xi|^{\alpha} + w} \widehat{\psi^2_n}(\xi)
\end{equation}
by introducing the stabilizing factor
\begin{equation}\label{sf}
  M_n=\frac{\langle({D}^{\alpha} +w )\psi_n, \psi_n \rangle}{\langle\psi^2_n, \psi_n\rangle},~~~~~~~~~~ \hspace*{20pt} \psi_n\in H^{\alpha}_{per}(\mathbb{T})
\end{equation}
 where $\langle\cdot ~, \cdot \rangle $ denotes the standard inner product in $L^2_{per} (\mathbb{T})$. Here, the free parameter $\nu$ is chosen as $2$ for the fastest convergence (see \cite{PS} for details).
\noindent
The iterative process is controlled by the error between two consecutive iterations given by
$$
  Error(n)=\|\psi_n-\psi_{n-1}\|_{L_{per}^{\infty}}
$$
and the stabilization factor error given by
$$
|1-M_n|. $$ The residue of the interaction process is determined by
$
{RES(n)}= \|{ \mathcal{S}} \psi_n\|_{L_{per}^{\infty}},
$
where
$$
{ \mathcal{S}}\psi={D}^{\alpha} \psi +w \psi -\psi^2.  $$

\indent The periodic traveling wave solution $\phi$ of the BBM equation  corresponds to boundary value problem \eqref{ode1} with $\alpha=2$  is given in the equation  \eqref{dnsol}.

\begin{figure}[h]
 \begin{minipage}[t]{0.40\linewidth}
   \includegraphics[width=2.9in]{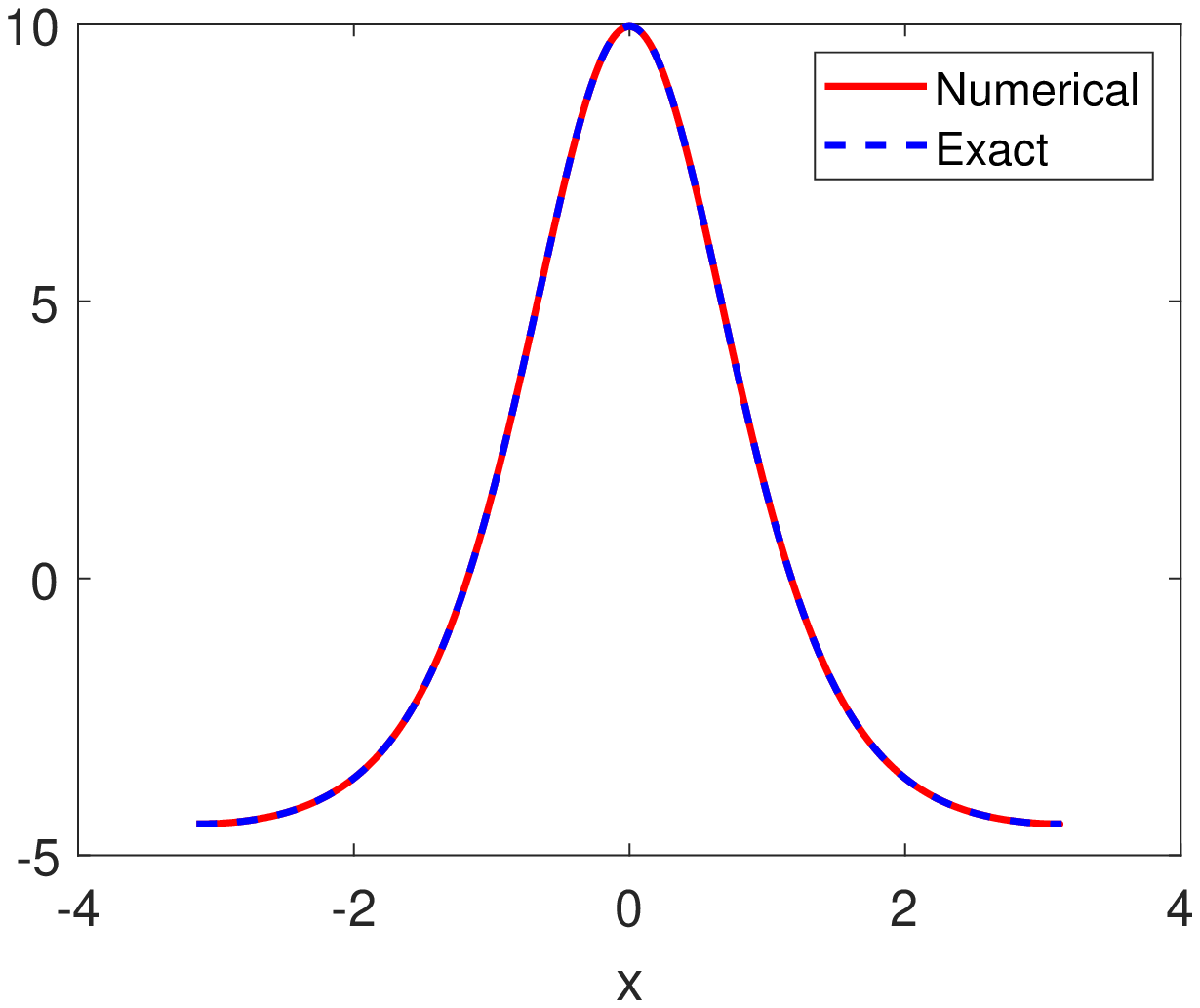}
 \end{minipage}
\hspace{30pt}
\begin{minipage}[t]{0.40\linewidth}
   \includegraphics[width=3in]{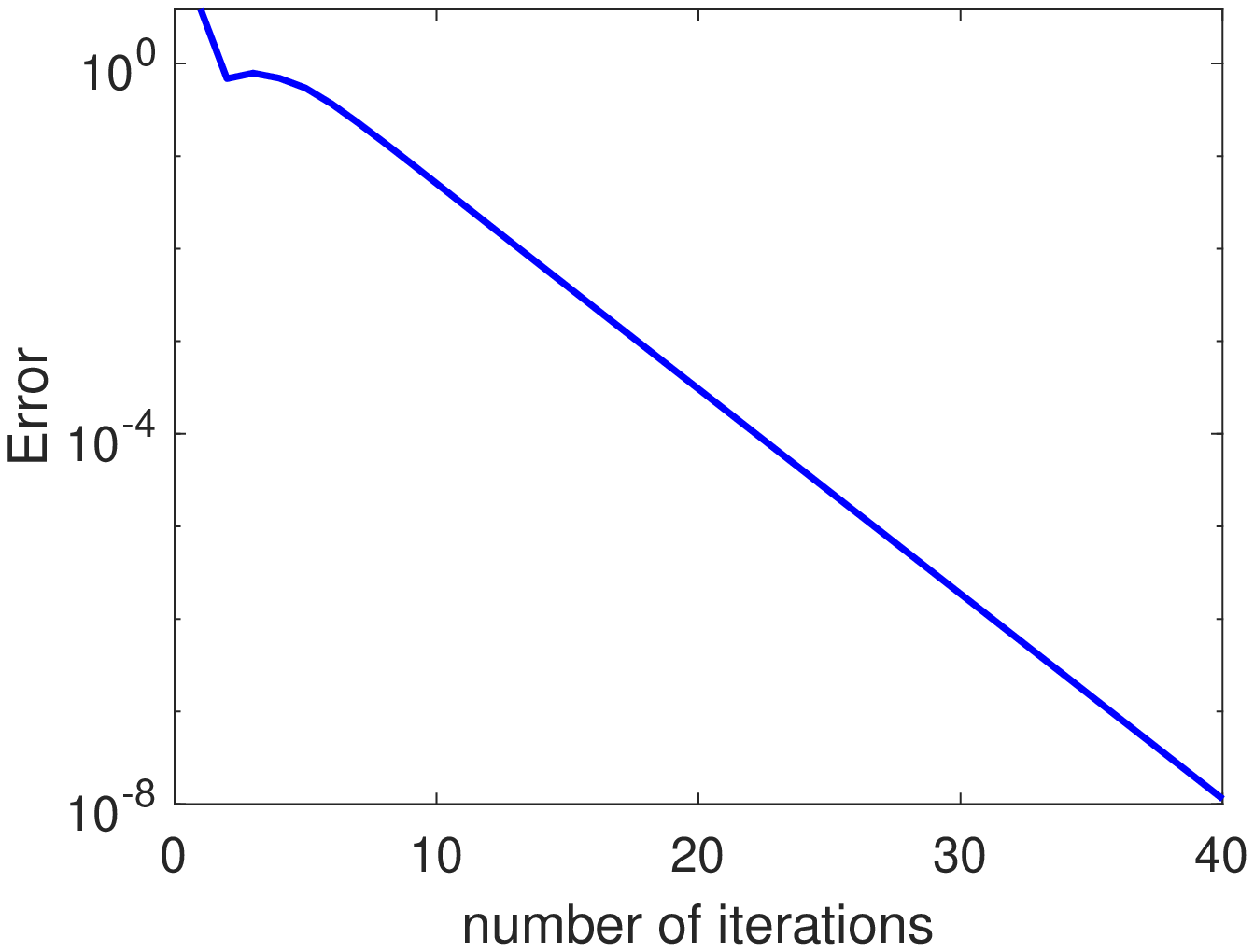}
 \end{minipage}
 \hspace{30pt}
\begin{minipage}[t]{0.40\linewidth}
   \includegraphics[width=3in]{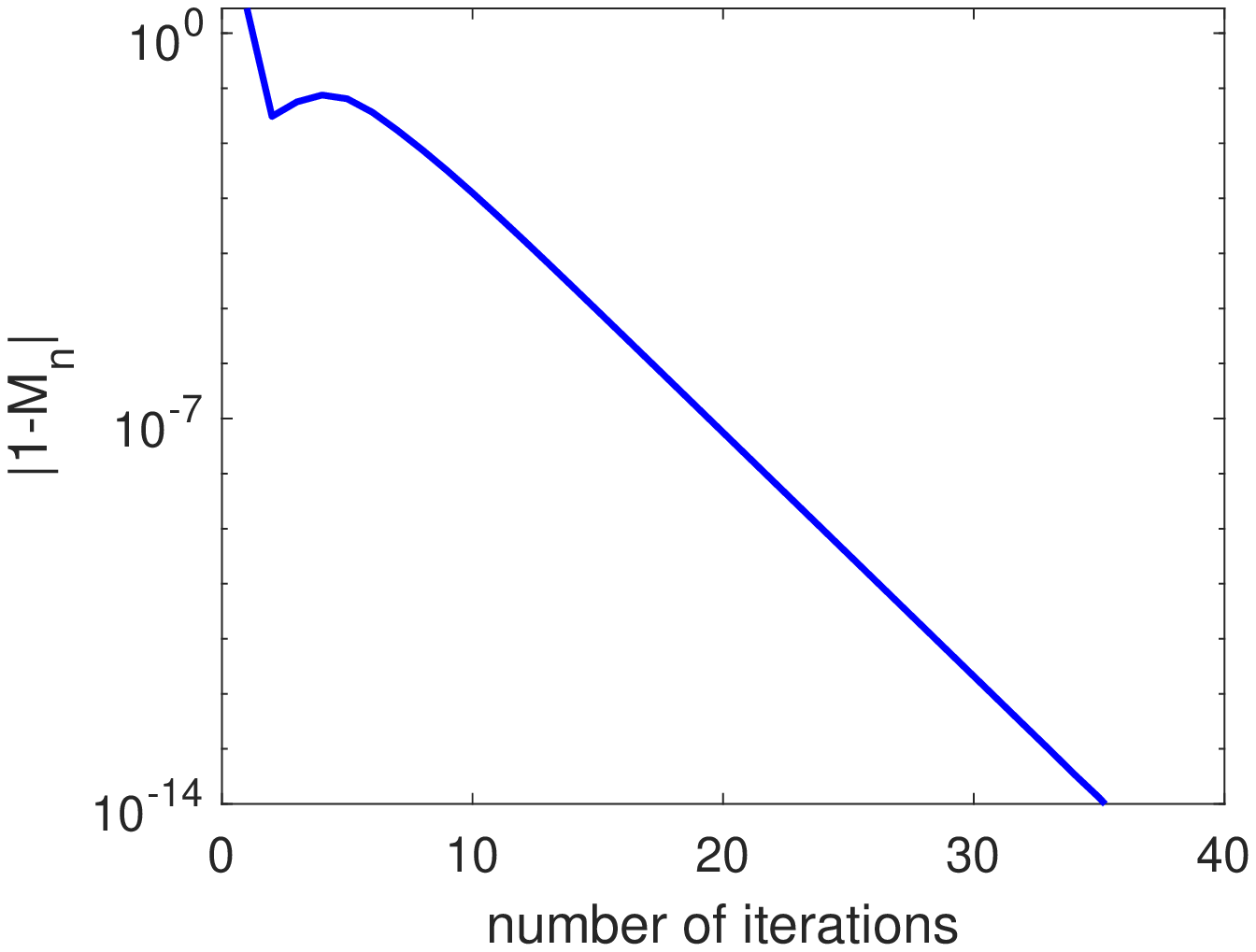}
 \end{minipage}
 \hspace{30pt}
\begin{minipage}[t]{0.40\linewidth}
   \includegraphics[width=3in]{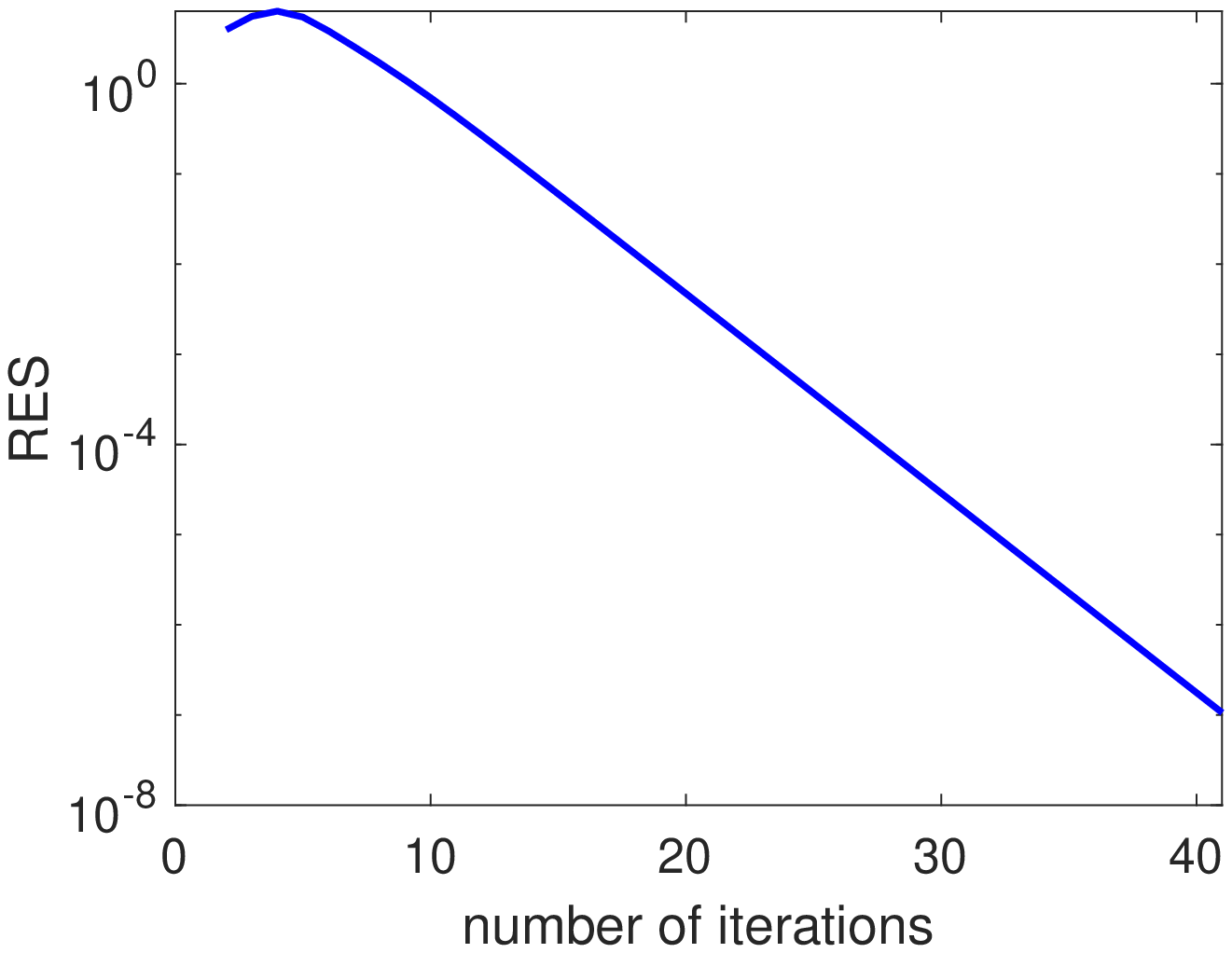}
 \end{minipage}
  \caption{The exact and the numerical solutions of  the BBM equation with the wave speed $c=1.2181$  and  the variation of  \mbox{$Error(n)$}, $|1-M_n|$ and $RES$ with the number of iterations in semi-log scale.}
 \label{dif_er}
\end{figure}

In order to test the accuracy of our scheme, we compare the exact solution \eqref{dnsol} with the numerical solution obtained by using  the expansion corresponding to \eqref{ode4} as the initial guess
\begin{equation} \label{initialguess}
 \psi_0(x)= 1+a\cos(x)+a^2\left[\frac{1}{2}-\frac{1}{2(2^{\alpha}-1)}+\frac{\cos(2x)}{2(2^{\alpha}-1)}\right]+a^3\frac{\cos(3x)}{2(2^{\alpha}-1)(3^{\alpha}-1)},  \end{equation}
where $a=0.2$ and $\alpha=2$.
 In this experiment, the space interval is $ [-\pi,\pi] $ and number of grid points is chosen as $N=2^{12}$. In the first panel of   Figure 5.1, we depict the exact and numerical solutions for the wave speed $c=1.2181$. As it is seen from the figure, the exact and the numerical solutions coincide. In the other panels of Figure 5.1, the variations of three different errors with the number of iteration are presented.  These results show that our numerical scheme captures the solution remarkably well.

 \vspace*{-10pt}
\begin{figure}[h]
\begin{minipage}[t]{0.49\linewidth}
   \includegraphics[width=3in]{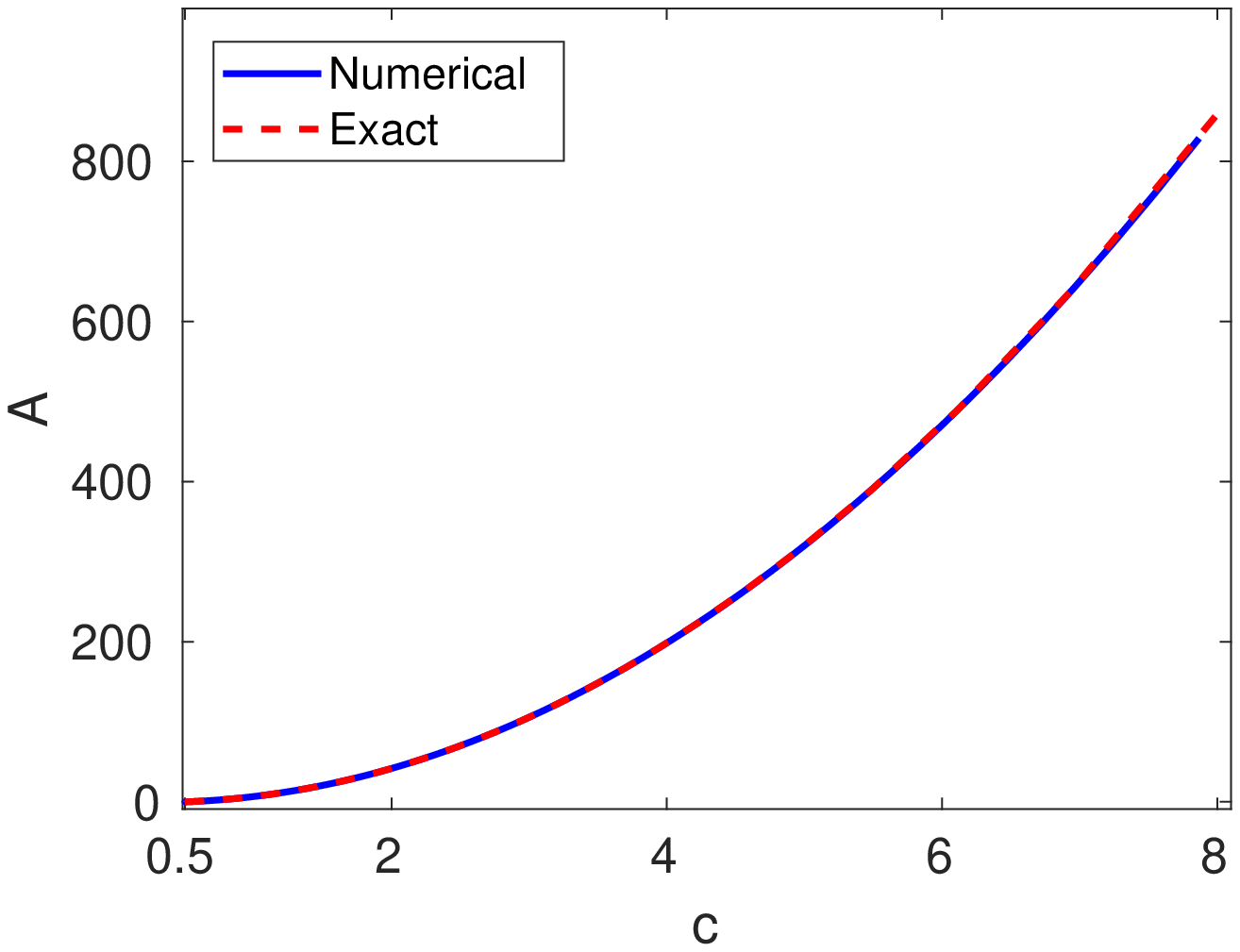}
 \end{minipage}
  \begin{minipage}[t]{0.49\linewidth}
   \includegraphics[width=3in]{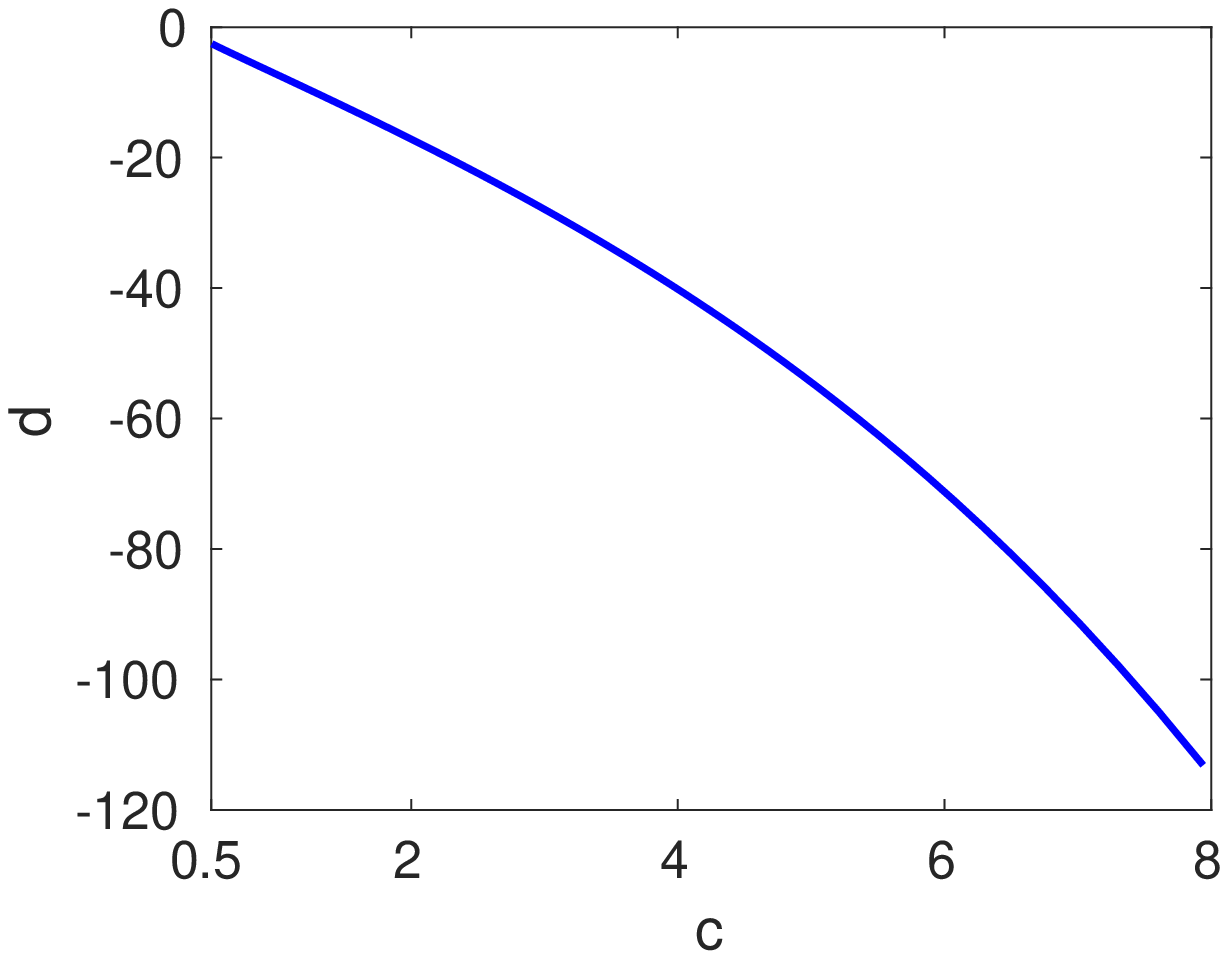}
 \end{minipage}
 \caption{Variation of exact and numerical values of  integration constant $A$ versus $c$ (left panel) and variation of numerical value of $d$ versus $c$ (right panel) for  $\alpha=2$.}
\end{figure}

\indent The left panel of Figure 5.2 shows the exact and numerical variation of the integration constant $A$ with respect to $c$. The exact relation between $A$ and $c$ is illustrated by using the eqs. \eqref{cvalue} and \eqref{constant}. From the numerical point of view, first we obtain the numerical solution by using  Petviashvili's scheme \eqref{iterative2} for every $w\in (1,+\infty)$. Finally,
the numerical value of the integration constant $A(c)$ is evaluated by
\begin{equation}\label{Avalue}
A(c)= \frac{1}{2} \left[ c^2 w^2-(c-1)^2 \right].
\end{equation}
 As it is seen from the figure, the numerical and the exact values of $A(c)$ coincide. In the right panel of the figure, we present the
variation of  $d$  with $c$. We observe that $d$ is always negative.
%%%%%%%%%%%%%%%%%%%%%% alpha=1
\begin{figure}[h]
 \centering
 \includegraphics[width=3in]{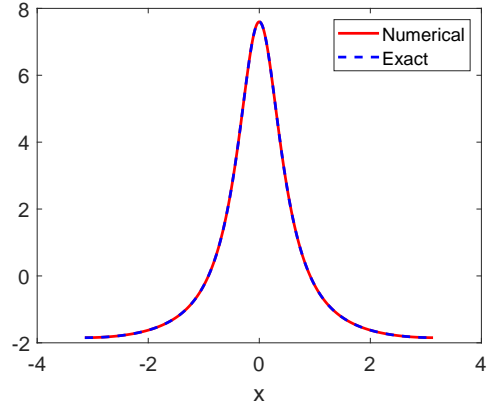}
  \caption{The exact and the numerical solutions of rBO equation for the wave speed $c=1.2192$.}
\end{figure}

\begin{figure}[h]
\begin{minipage}[t]{0.49\linewidth}
   \includegraphics[width=3in]{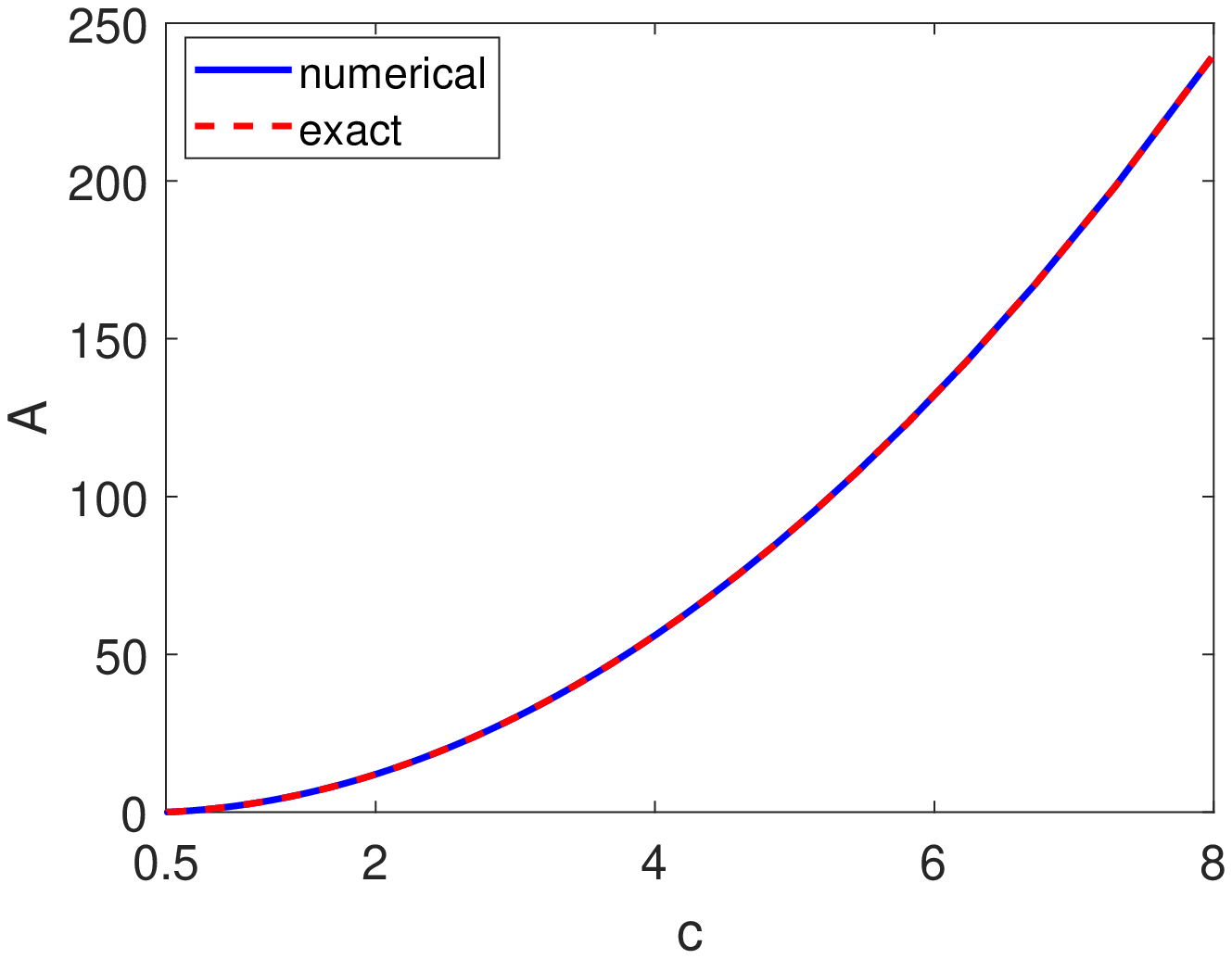}%{Figure54aa-n.eps}
 \end{minipage}
  \begin{minipage}[t]{0.49\linewidth}
   \includegraphics[width=3in]{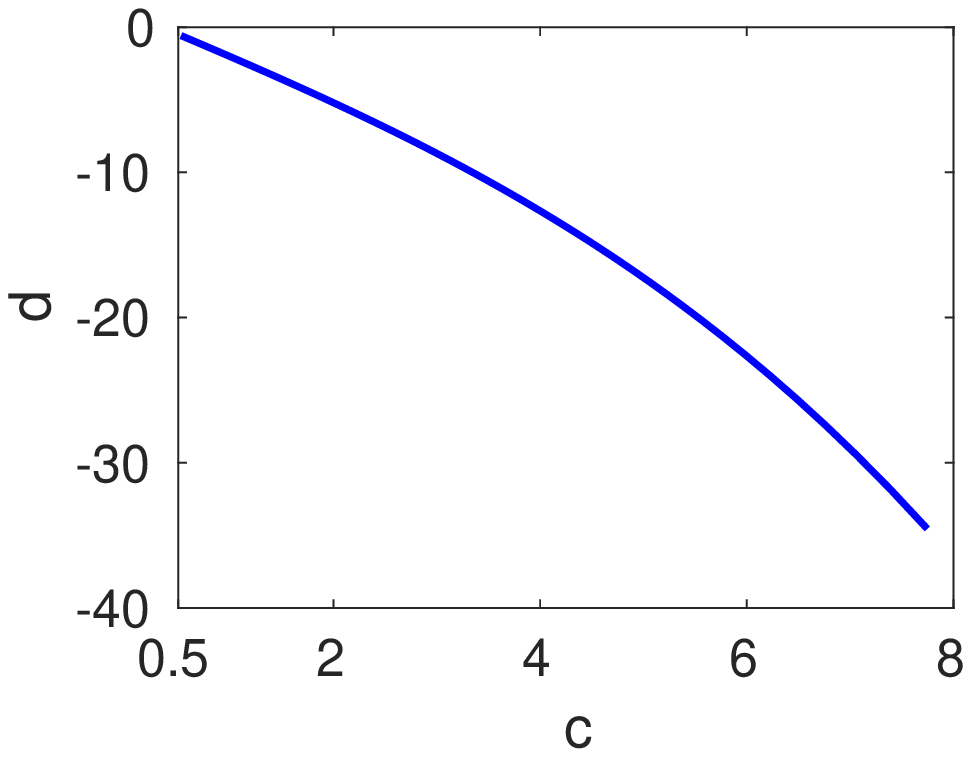}
 \end{minipage}
 \caption{Variation of exact and numerical values of  integration constant $A$ versus $c$ (left panel) and variation of numerical value of $d$ versus $c$ (right panel) for  $\alpha=1$.}
 %Variation of  $A$ versus $c$ (left panel) and variation of  $d$ versus $c$ (right panel) both evaluated  numerically for  $\alpha=1$.}
\end{figure}

\noindent
The single-lobe periodic solution to the boundary value problem
\eqref{ode4} for $\alpha=1$ is given by
\begin{equation}\label{rboexact}
\psi (x)= \frac{\sinh \gamma}{ \cosh \gamma -\cos x},
\end{equation}
where  parameter $\gamma\in(0,+\infty)$ is given by $\gamma=\coth^{-1}(w)$.
The transformation \eqref{transformation} allows one to obtain the single-lobe periodic solution for the rBO equation as
\begin{equation} \label{rboexact2}
\phi(x)=2c \bigg( \frac{\sinh \gamma}{ \cosh \gamma -\cos x} -1 \bigg).
\end{equation}
We compare the exact solution \eqref{rboexact2} with the numerical solution obtained by using the expansion \eqref{initialguess} with $a=0.2$ and $\alpha=1$ as the initial guess. The space interval is $ [-\pi,\pi] $ and number of grid points is chosen as $N=2^9$. We present the exact and numerical solutions for the wave speed $c=1.2192$ in Figure 5.3. Since we have $\int_0^{\pi} \psi(x) dx= \pi$ from \eqref{rboexact}, we obtain  $c=(3-w)^{-1}$ by using \eqref{wavespeed}. Eliminating $w$ in \eqref{Avalue}, we compute the integration constant $A(c)=4c^2-2c$ explicitly.  In the left panel of Figure 5.4, we compare the
exact and numerical variation of the integration constant $A$ with respect to $c$
for $\alpha=1$. The right panel shows the variation of  $d$  with $c$ for $\alpha=1$.
 Figures 5.2 and $5.4$  show that $A(c)$ is strictly increasing for all values of $c$ and $d$ is always negative. Therefore, numerical results are compatible with Proposition  \ref{spectmin} stating the spectral stability of single-lobe solution for $\alpha\in (\frac{1}{2},2]$.
%%%%%%%%%%%%%%%%%%%%%% alpha=2
\begin{figure}[!htbp]
\begin{minipage}[t]{0.49\linewidth}
   \includegraphics[width=3in]{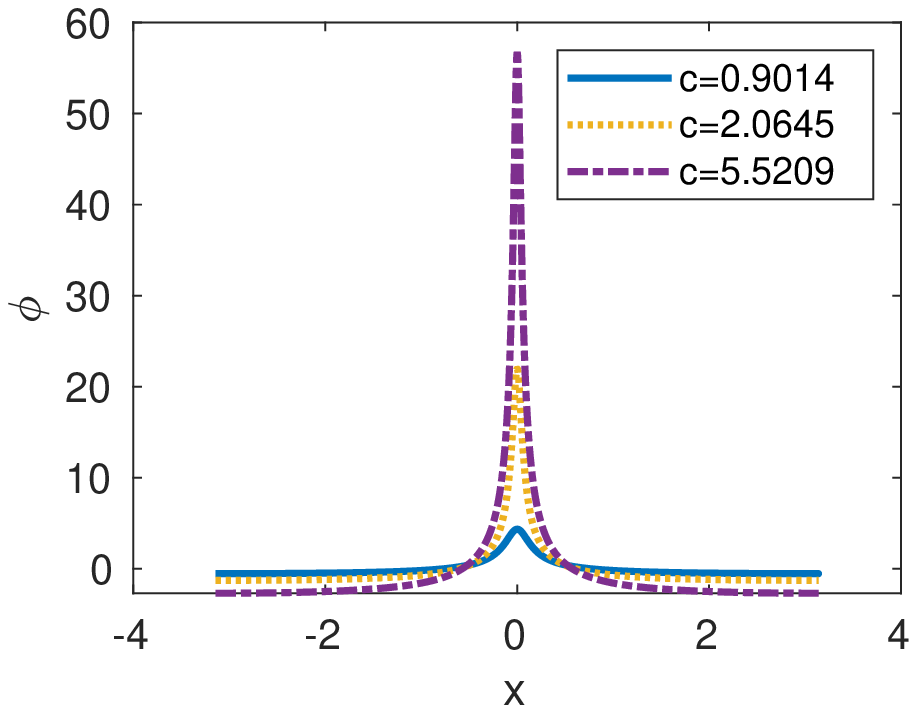}
 \end{minipage}
  \begin{minipage}[t]{0.49\linewidth}
   \includegraphics[width=3in]{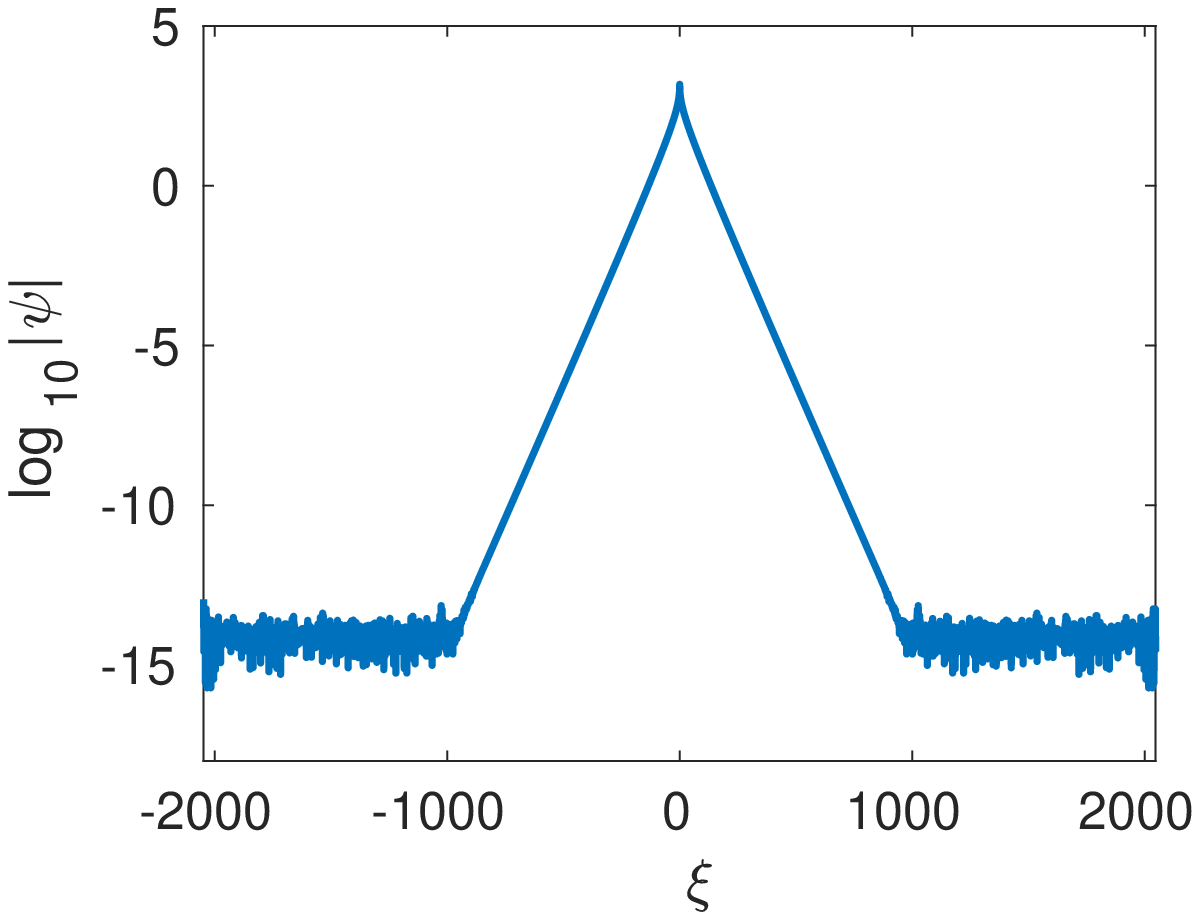}
 \end{minipage}
 \caption{ Various numerical wave profiles  (left panel) and the modulus of the Fourier coefficients of the numerical solution (right panel) for   $\alpha=0.55$.}
\end{figure}

\indent
In the next numerical experiment, we  choose $\alpha=0.55> \alpha_0$   since the fold point $\alpha_0=0.5$ for the fBBM equation.
%Since we only have the exact solution of the boundary value problem \eqref{nonzero} for $\alpha=2$ and $\alpha=1$,
%we can not present the  exact results for the other $\alpha$ values.
In the left panel of  Figure 5.5, we illustrate the periodic wave profiles for several  values  of $c$. It can be seen that the amplitude becomes more and more peaked  with the increasing wave speed $c$.  The right panel of Figure 5.5 shows that the modulus of the Fourier coefficients computed via a discrete Fourier transformation  decreases to machine precision for
 $N = 2^{12}$ Fourier modes. {In  Figure 5.6, we depict the
variation of $A(c)$  and $d$  with $c$. The numerical results are again compatible with the Proposition  \ref{spectmin}  for $\alpha > \frac{1}{2}$.
}
\begin{figure}[!htbp]
\begin{minipage}[t]{0.49\linewidth}
   \includegraphics[width=3in]{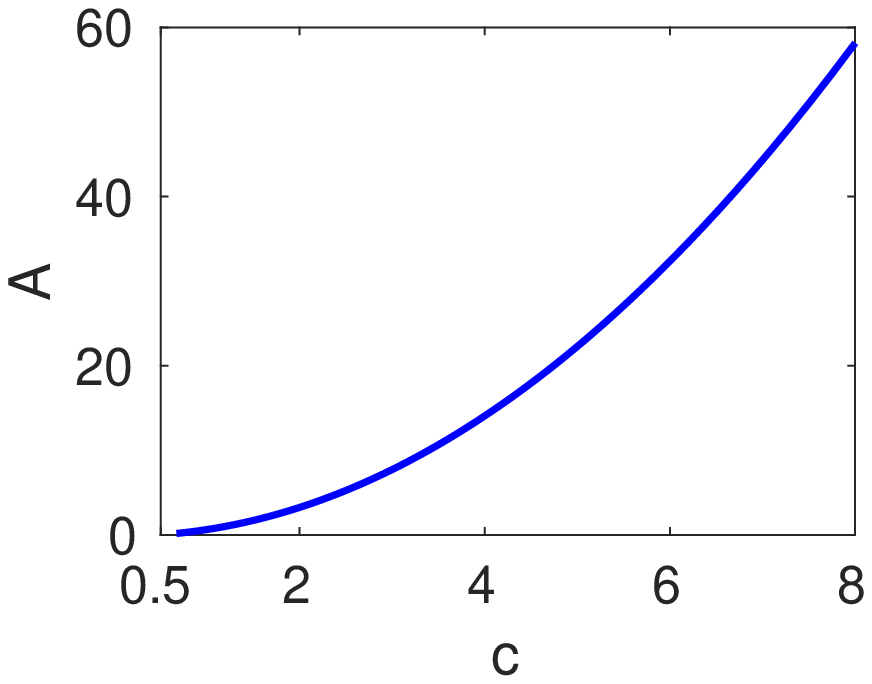}
 \end{minipage}
  \begin{minipage}[t]{0.49\linewidth}
   \includegraphics[width=3in]{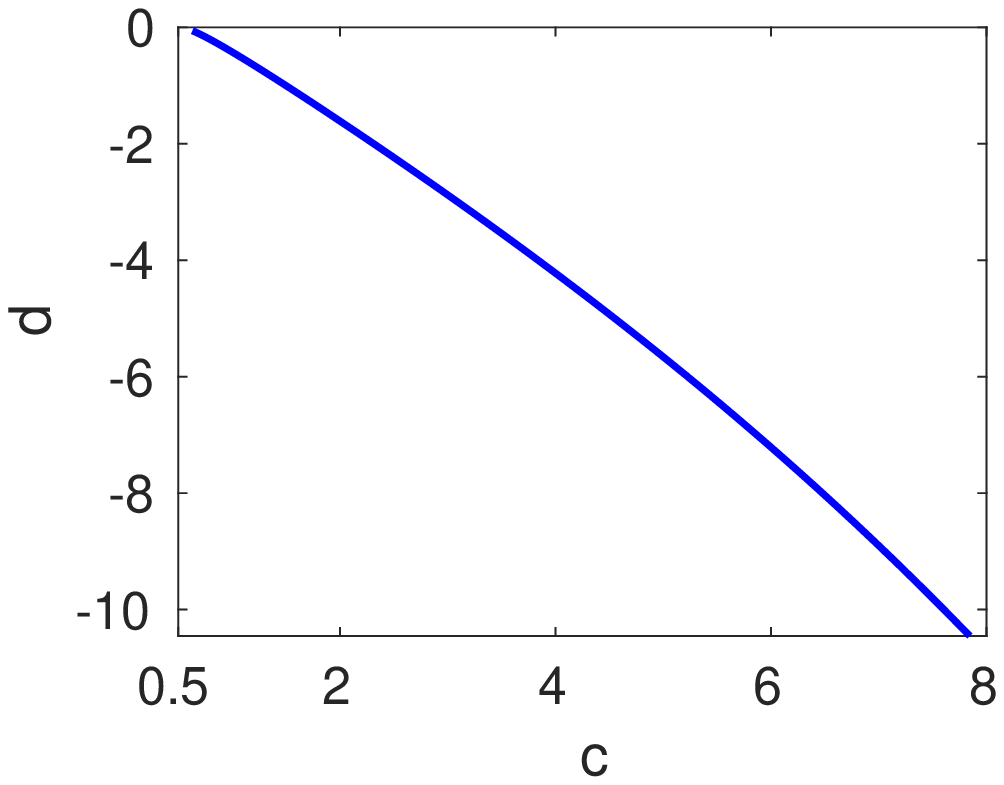}
 \end{minipage}
  \caption{Variation of  $A$ versus $c$ (left panel) and variation of  $d$ versus $c$ (right panel) both evaluated  numerically for  $\alpha=0.55$.}
\end{figure}

\begin{figure}[!htbp]
\begin{minipage}[t]{0.49\linewidth}
   \includegraphics[width=3in]{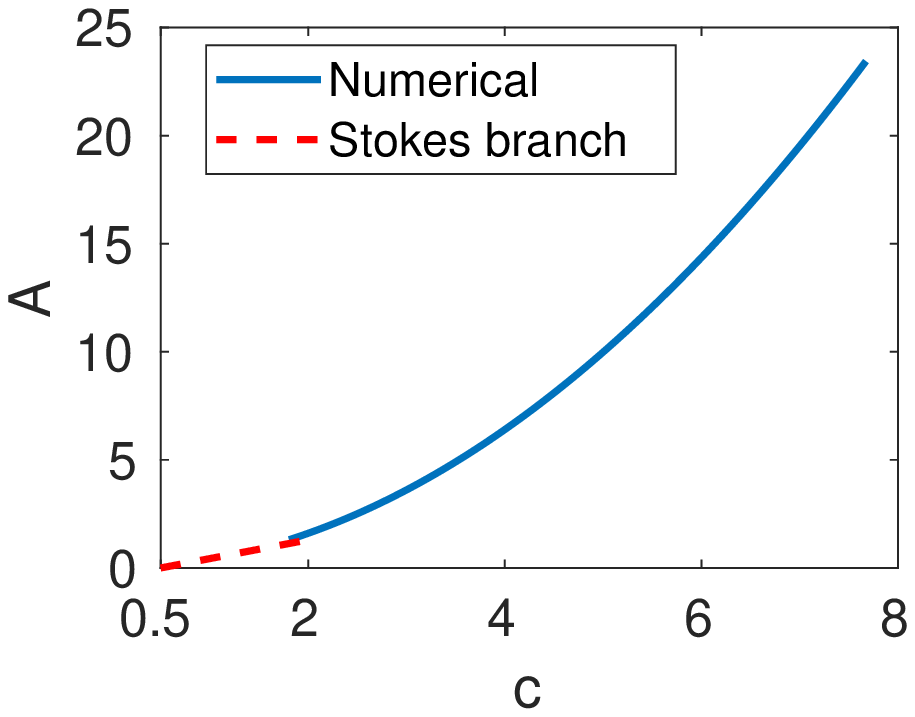}
 \end{minipage}
  \begin{minipage}[t]{0.49\linewidth}
   \includegraphics[width=3in]{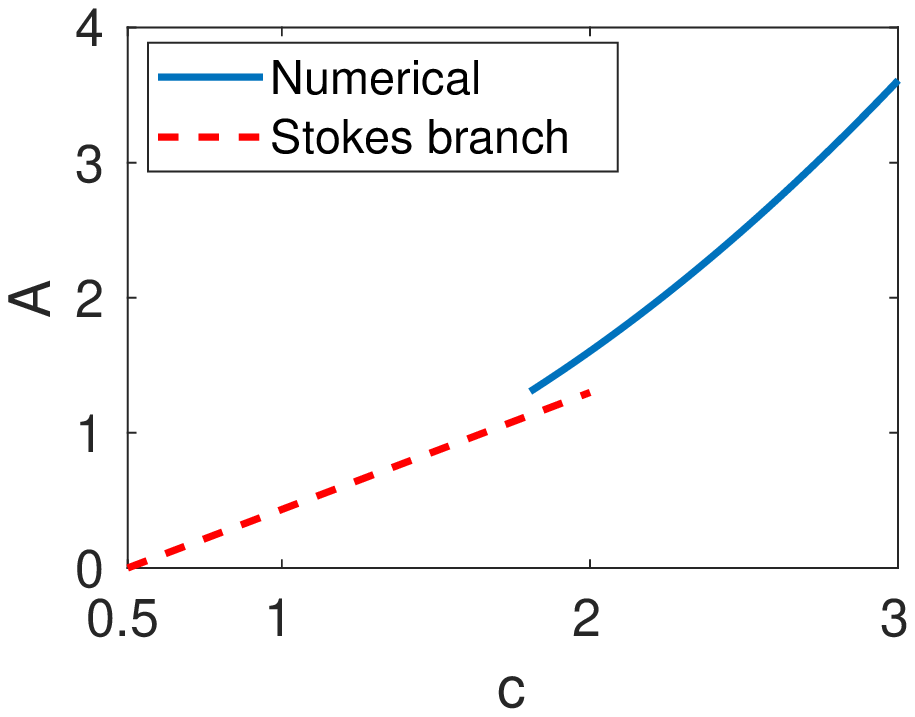}
 \end{minipage}
 \caption{Variation of  values of  $A$ versus $c$ (left panel) and  close-up look to the gap   (right panel)  ($\alpha=0.45$). The solid line shows the numerical result obtained by Petviashvili's method and the dashed line shows the relation \eqref{rel}  from the small amplitude expansion. }
\end{figure}

In Figure 5.7, we show the variation of $A(c)$ with $c$ for $\alpha=0.45 <\alpha_0$. The picture shows a lack of convergence using Petviashvili's scheme in the sense that function $A(c)$ can not reach the bifurcation point $c=\frac{1}{2}$ as determined for the case $\alpha\in\left(\frac{1}{2},2\right]$. To do best of our knowledge, this phenomena gives us (at least numerically) an indication that the number of negative eigenvalues for the linearized operator is $2$ and/or the existence of a fold point, that is, a value of $c$ such that $\dim(\ker(\mathcal{L}))=2$. In both cases, it is well known that the numerical method does not converge  (see \cite{LP, NPL,PS}) and prevent us to show that $\phi$ solves the minimization problem $(\ref{minp})$. Due to the lack of numerical data for $c\in (0.5, 1.8)$, we illustrate the curve (dashed line) by using  the
relation
\begin{equation}
A(c)\approx \left(c-\frac{1}{2}\right) (2^{\alpha}+1) (2^{\alpha}-1),  \label{rel}
\end{equation}
obtained by the expressions of $c$ and $A(c)$ in $(\ref{exp-speed1})$. The right panel gives a closer look to the gap between the branch of the small amplitude periodic waves and numerical result.   In order to fill this gap, we use the Newton's method for $c\in (0.5, 1.8)$ in Figure 5.8.
\begin{figure}
 \centering
 \includegraphics[width=3in]{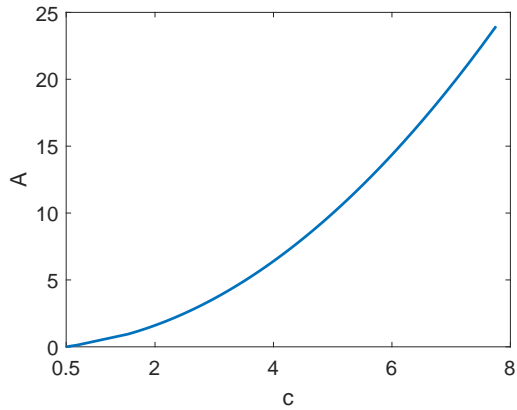}
  \caption{Variation of numerical values of  $A$ versus $c$ with the combination of Newton and Petviashvili method ($\alpha=0.45$)}
\end{figure}

\begin{figure}[]
\begin{minipage}[t]{0.49\linewidth}
   \includegraphics[width=2.9in]{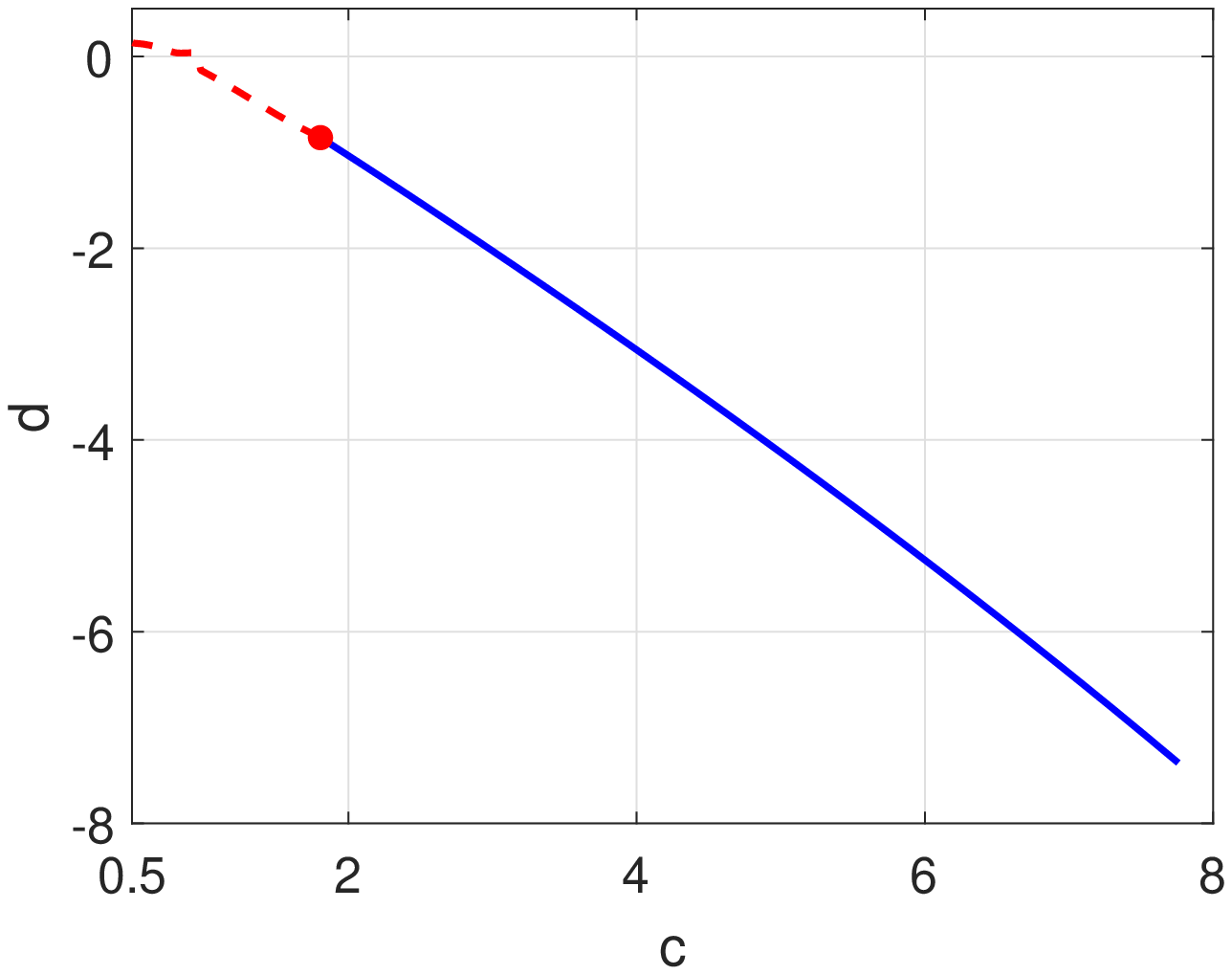}
 \end{minipage}
  \begin{minipage}[t]{0.49\linewidth}
   \includegraphics[width=2.9in]{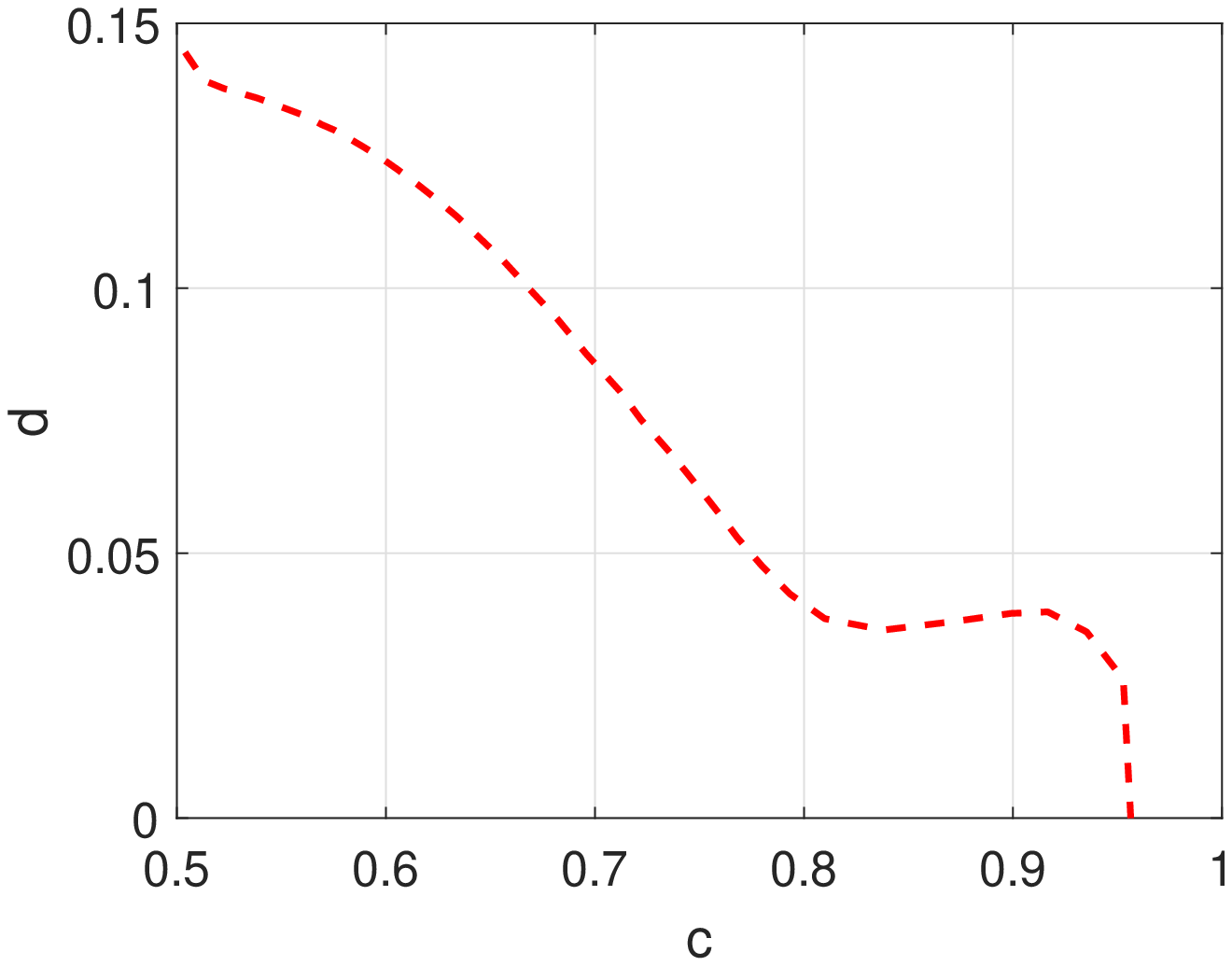}
 \end{minipage}
 \centering
 \includegraphics[width=2.9in]{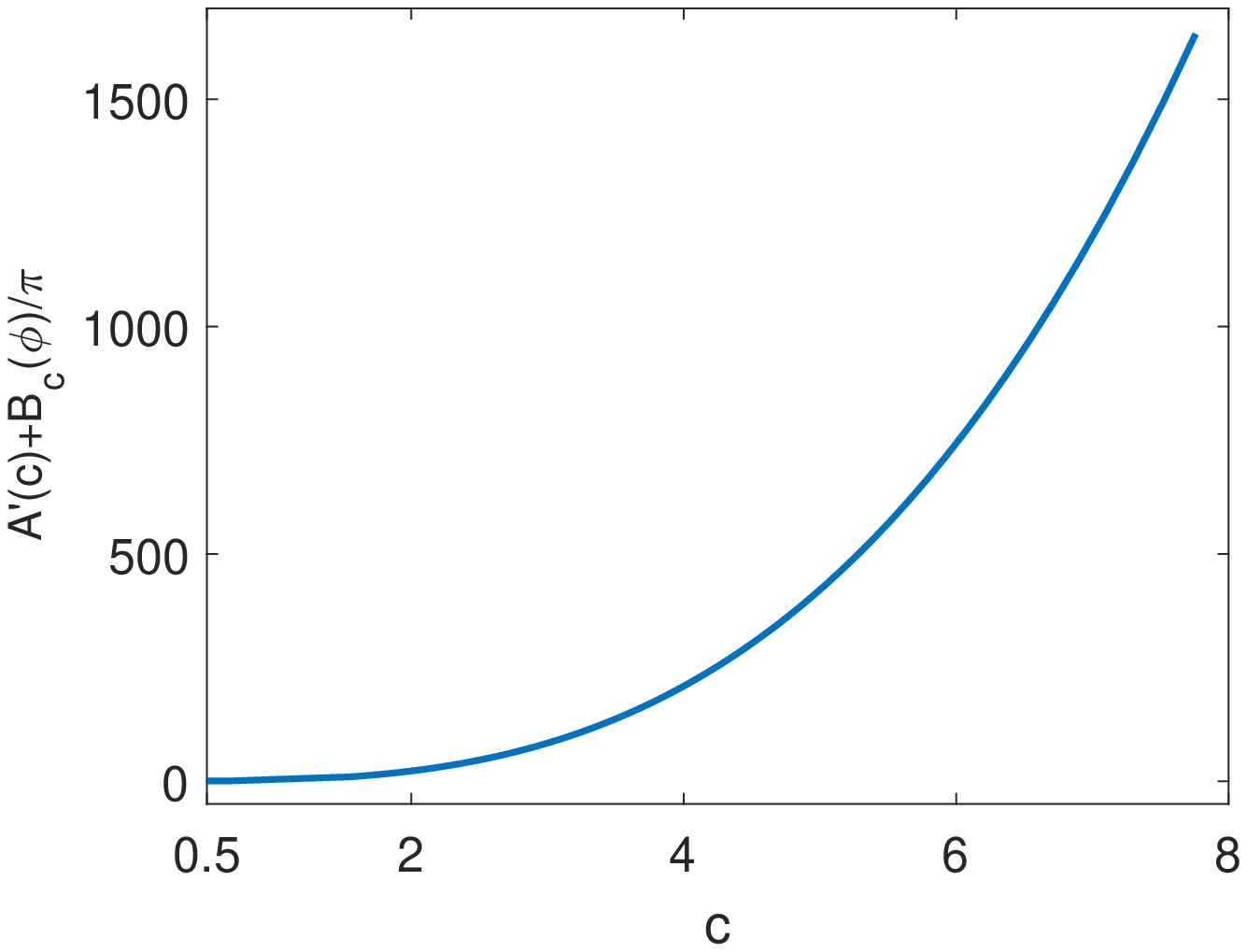}
 \caption{Variation of    $d$ versus $c$ (top left),  close-up look near critical value $c^*$   (top right) and the variation of $A^{\prime}(c)+\frac{1}{\pi} {\mathcal B}_c(\phi)$ with $c$ (bottom)  for $\alpha=0.45$.}
\end{figure}

\begin{figure}[]
\begin{minipage}[t]{0.49\linewidth}
   \includegraphics[width=3in]{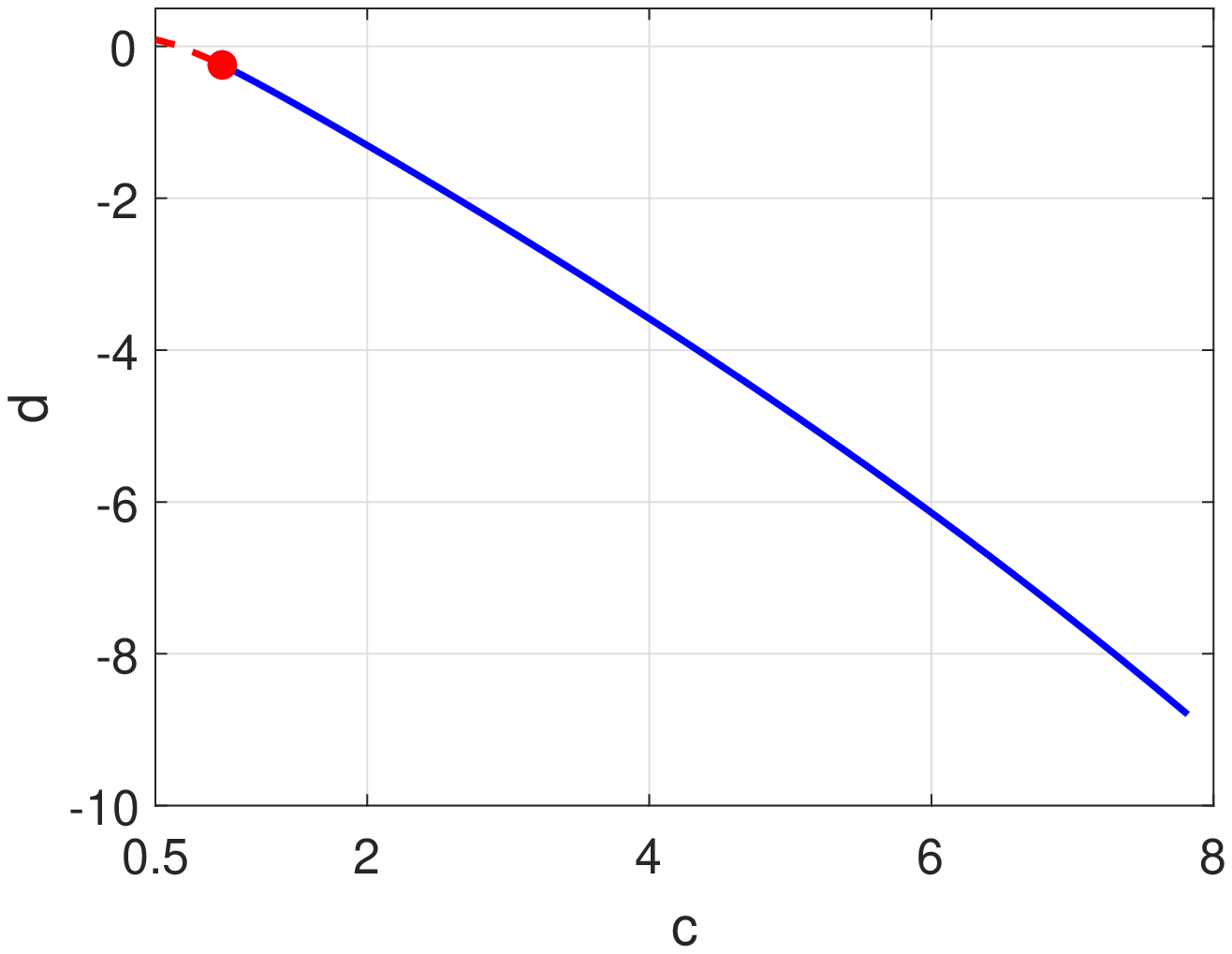}
 \end{minipage}
  \begin{minipage}[t]{0.49\linewidth}
   \includegraphics[width=3in]{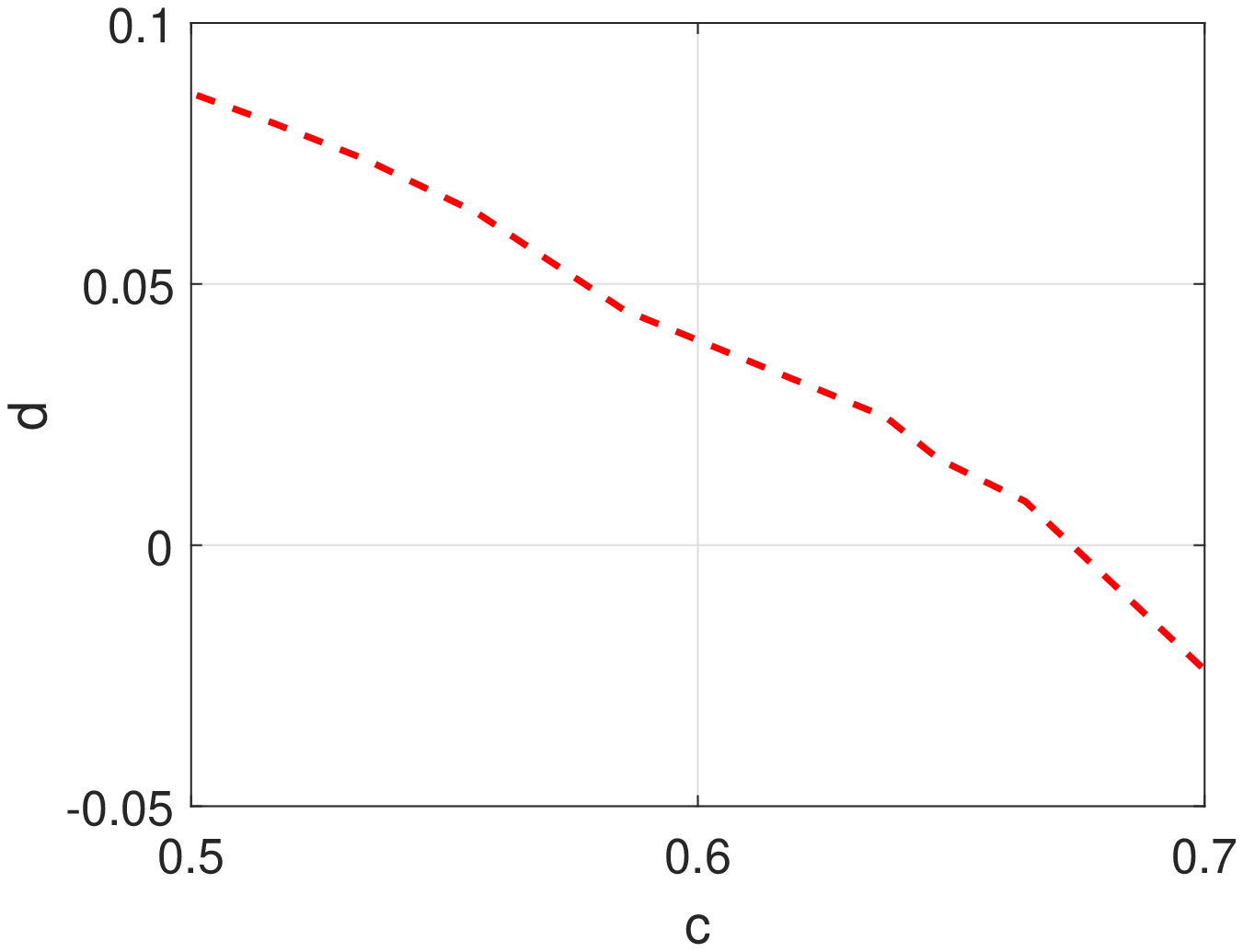}
 \end{minipage}
 \centering
 \includegraphics[width=3in]{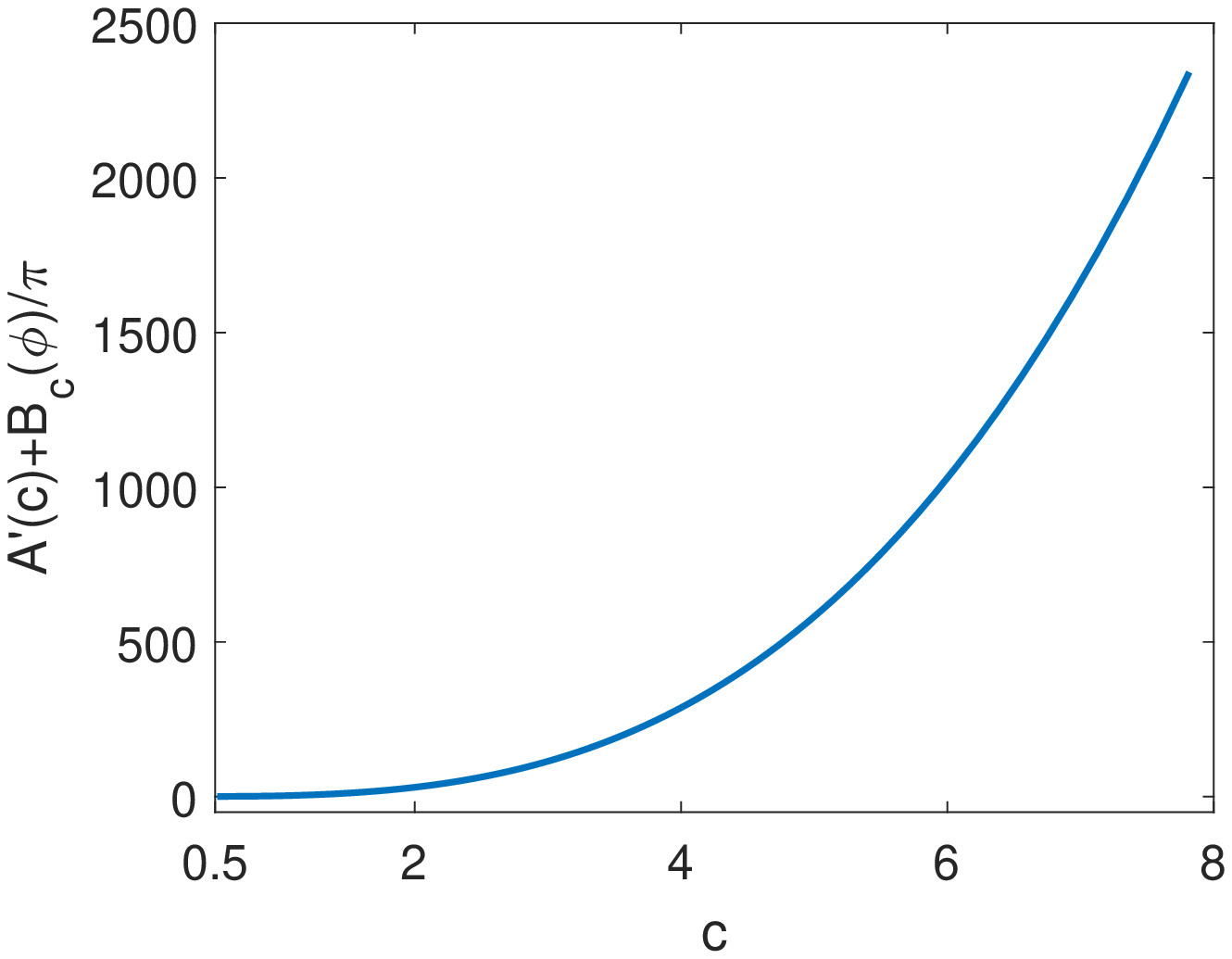}
 \caption{Variation of    $d$ versus $c$ (top left),  close-up look near critical value $c^*$   (top right) and the variation of $A^{\prime}(c)+\frac{1}{\pi} {\mathcal B}_c(\phi)$ with $c$ (bottom)  for $\alpha=0.5$.}
\end{figure}

%%\begin{figure}[]
%%\begin{minipage}[t]{0.49\linewidth}
  %% \includegraphics[width=3in]{Alpha045_trace.eps}
 %%\end{minipage}
  %%\begin{minipage}[t]{0.49\linewidth}
   %%\includegraphics[width=3in]{Alpha05_trace.eps}
 %%\end{minipage}
 %%\caption{Variation of    $\mbox{tr} ~S(0)$ versus $c$  for  $\alpha=0.45$ (left panel) and $\alpha=0.5$ (right panel).}
%%\end{figure}
In Figure 5.9, we present the variation of $d$ and the term  $A^{\prime}(c)+\displaystyle\frac{1}{\pi} {\mathcal B}_c(\phi)$ with $c$ for $\alpha=0.45$.  We observe that
$A^{\prime}(c)+\displaystyle\frac{1}{\pi} {\mathcal B}_c(\phi)$ is  positive for all values of $c$. However, $d$ is positive up to a critical speed $c^* \approx 0.953$ and then becomes negative. Therefore, $\mbox{det} S(0)$ is positive for $c\in (\frac{1}{2}, c^*)$ and negative for $c\in (c^*, +\infty)$.   We  also illustrate  the variation of $d$ and the term  $A^{\prime}(c)+\displaystyle\frac{1}{\pi} {\mathcal B}_c(\phi)$ with $c$ in Figure 5.10 for the fold point $\alpha_0=0.5$. The numerical results are very similar to the ones for $\alpha=0.45$ with $c^* \approx 0.67$.  As it is seen from the figures,  $\det S(0)<0$  for $c\in (c^*, +\infty)$ which indicates the spectral stability.  However, $\det S(0)>0$  for $c\in (\frac{1}{2},c^*)$    which yields that $n_0=0$ or $n_0=2$. Since in this case $d>0$, we have by Proposition $\ref{teoest}$ that $\phi$ is spectrally stable for all $c\in (\frac{1}{2}, c^*)$.

\section{Remarks on the Orbital Stability of Periodic Waves}

In this section, we present a brief discussion concerning the orbital stability of the periodic wave $\phi$ obtained by Lemma $\ref{minlema}$.\\
\indent Before stating the result, we need some preliminary tools. For functions $u$ and $v$ in $H_{per}^\frac{\alpha}{2}(\mathbb{T})$, we define $\rho$ as the ``distance'' between $u$ and $v$  given  by
\begin{equation*}
	\rho(u,v)=\inf_{y\in\mathbb{R}}||u-v(\cdot+y)||_{\Hn}.
\end{equation*}

\indent Our precise definition of orbital stability is given below.\\

\begin{definition}\label{defstab}
	We say that the periodic solution $\phi$ is orbitally stable in $\Hper$, by the periodic flow of \eqref{rbbm},  if for any $\ve>0$ there exists $\delta>0$ such that for any $u_0\in H_{per}^{\frac{\alpha}{2}}(\mathbb{T})$ satisfying $\|u_0-\phi\|_{\Hn}<\delta$, the solution $u(t)$ of \eqref{rbbm} with initial data $u_0$ exists globally and satisfies
	$$
	\rho(u(t),\phi)<\ve,
	$$
	for all $t\geq0$.
\end{definition}
\begin{remark}
	Our notion of orbital stability prescribes the existence of global solutions. Thus, from Section 2 we need to assume that $\alpha>1$.
\end{remark}
\indent Next, let us introduce the conserved quantity
\begin{equation}\label{V}
	V(u)=\frac{1}{2}\int_{-\pi}^{\pi}\left(u^2+\frac{u^3}{3}\right)dx=P(u)-E(u)
\end{equation}
and the auxiliary functional
\begin{equation*}
	Q(u):=-V(u)+(c-1-A(c))M(u).
\end{equation*}
\indent In what follows, we set
\begin{equation*}
	\Upsilon_0=\{u\in \Hper;\ \langle Q'(\phi),u\rangle=0\}.
\end{equation*}
Note that $\Upsilon_0$ is nothing but the tangent space to $\{u\in \Hper;  Q(u)=Q(\phi)\}$ at $\phi$. With these notations, the result in \cite[Theorem 2.1]{CNP} (see also \cite{ANP}) reads as follows.
\medskip
\begin{prop}\label{teoest} Suppose that $n(\mathcal{L})=1$ and $z(\mathcal{L})=1$. If there exists $\Phi\in H_{per}^{\alpha}(\mathbb{T})$ such that $\langle\mathcal{L}\Phi,\Psi\rangle=0$ for all $\Psi\in \Upsilon_0$ and $\langle\mathcal{L}\Phi,\Phi\rangle<0$, then $\phi$ is orbitally stable in $\Hper$ by the periodic flow of   $(\ref{rbbm})$.
\end{prop}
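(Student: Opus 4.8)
The plan is to reduce the orbital stability to a coercivity estimate for the quadratic form associated with $\mathcal{L}$ on the tangent space $\Upsilon_0$, and then to run the classical Lyapunov (Grillakis--Shatah--Strauss type) argument, exactly as in the abstract framework of \cite{CNP}. Recall from Section 2 that for $\alpha>1$ the Cauchy problem $(\ref{cprbbm})$ is globally well posed in $\Hper$, so solutions with data near $\phi$ exist for all $t\geq0$, and that $V$ and $M$ are conserved. Since $V$ and $M$ are invariant under translations, differentiating $V(\phi(\cdot+y))$ and $M(\phi(\cdot+y))$ at $y=0$ yields $\langle V'(\phi),\phi'\rangle=\langle M'(\phi),\phi'\rangle=0$, hence $\langle Q'(\phi),\phi'\rangle=0$ and $\phi'\in\Upsilon_0$. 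Thus the neutral direction of $\mathcal{L}$ already lies inside the constraint space.

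The main step is to prove the coercivity
\begin{equation}\label{coerc-plan}
\langle\mathcal{L}v,v\rangle\geq C\|v\|_{\Hn}^2,\qquad \text{for all } v\in\Upsilon_0 \text{ with } \langle v,\phi'\rangle=0,
\end{equation}
for some $C>0$. Since $\Upsilon_0=\{u\in\Hper;\ \langle Q'(\phi),u\rangle=0\}$ has codimension one, I would compute $n(\mathcal{L}|_{\Upsilon_0})$ through the same index formula used in Sections 3 and 4 (namely \cite[Theorem 4.1]{pel-book}),
$$n(\mathcal{L}|_{\Upsilon_0})=n(\mathcal{L})-n_0-z_0,$$
where now $n_0$ and $z_0$ are the numbers of negative and zero eigenvalues of the scalar quantity $\langle\mathcal{L}^{-1}Q'(\phi),Q'(\phi)\rangle$. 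The hypothesis furnishes $\Phi$ with $\mathcal{L}\Phi\perp\Upsilon_0$, that is $\mathcal{L}\Phi\in[Q'(\phi)]$, so that $\Phi$ is, up to a nonzero multiple, $\mathcal{L}^{-1}$ applied to $Q'(\phi)$ (which is legitimate because $Q'(\phi)\perp\ker(\mathcal{L})=[\phi']$); the condition $\langle\mathcal{L}\Phi,\Phi\rangle<0$ then forces that scalar to be negative, giving $n_0=1$ and $z_0=0$. Together with $n(\mathcal{L})=1$ this yields $n(\mathcal{L}|_{\Upsilon_0})=1-1-0=0$, and since $z(\mathcal{L})=1$ with $\ker(\mathcal{L})=[\phi']\subset\Upsilon_0$, the form $\langle\mathcal{L}\cdot,\cdot\rangle$ is nonnegative on $\Upsilon_0$ with kernel exactly $[\phi']$. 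Because the positive spectrum of $\mathcal{L}$ is bounded away from zero (Proposition \ref{propspec}), removing the neutral mode $\phi'$ upgrades nonnegativity to the coercivity $(\ref{coerc-plan})$.

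With $(\ref{coerc-plan})$ in hand, orbital stability follows from a standard modulation argument. For $u$ close to $\phi$ I would use the implicit function theorem to select a translation $y=y(u)$ realizing the infimum in $\rho(u,\phi)$, so that the perturbation $v:=u(\cdot-y(u))-\phi$ satisfies $\langle v,\phi'\rangle=0$. Conservation of $V$ and $M$ controls, to leading order, the component of $v$ along $Q'(\phi)$, placing $v$ essentially in $\Upsilon_0$. Expanding the conserved Lyapunov functional $G$ in $(\ref{lyapfunct})$ around $\phi$ gives $G(u)-G(\phi)=\tfrac12\langle\mathcal{L}v,v\rangle+o(\|v\|_{\Hn}^2)$, which by $(\ref{coerc-plan})$ is bounded below by $C'\|v\|_{\Hn}^2$ for $\|v\|_{\Hn}$ small. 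Since the left-hand side is conserved and can be made arbitrarily small by taking the initial data close to $\phi$, a continuity/bootstrap argument confines $\|v(t)\|_{\Hn}$, hence $\rho(u(t),\phi)$, to remain small for all $t\geq0$, which is precisely Definition \ref{defstab}.

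The hard part will be the coercivity step $(\ref{coerc-plan})$, and more precisely the correct bookkeeping of the single negative direction of $\mathcal{L}$ against the one-dimensional constraint together with the neutral mode $\phi'$. The delicate point is verifying that the hypothesis on $\Phi$ really pins down $n_0=1$ and $z_0=0$: one must check that $\mathcal{L}\Phi$ is exactly aligned with $Q'(\phi)$ and that $\langle\mathcal{L}\Phi,\Phi\rangle<0$ is equivalent to negativity of $\langle\mathcal{L}^{-1}Q'(\phi),Q'(\phi)\rangle$. This is what replaces the classical slope condition $\tfrac{d}{dc}P(\phi)>0$ and lets us avoid differentiating the momentum, using only the bound $A(c)>c-\tfrac12$ already established for spectral stability. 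The remaining analytic care lies in justifying the $o(\|v\|_{\Hn}^2)$ Taylor remainder and the smoothness of the modulation $y(u)$, both routine for the smooth functionals and the single-lobe profile at hand.
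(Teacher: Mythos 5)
Your proposal is sound, but note that the paper itself offers no proof of this proposition: it is quoted verbatim from \cite[Theorem 2.1]{CNP} (see also \cite{ANP}), with the burden of proof delegated entirely to that reference. Your reconstruction --- identifying $\mathcal{L}\Phi$ as a nonzero multiple of $Q'(\phi)$ so that $\langle\mathcal{L}\Phi,\Phi\rangle<0$ forces $\langle\mathcal{L}^{-1}Q'(\phi),Q'(\phi)\rangle<0$, hence $n_0=1$, $z_0=0$ and coercivity of $\mathcal{L}$ on $\Upsilon_0\cap[\phi']^{\perp}$, followed by modulation and conservation of the Lyapunov functional --- is precisely the constrained-coercivity argument underlying the cited result, so it matches the approach the paper relies on.
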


\begin{flushright}
	$\square$
\end{flushright}

\indent Finally, we can use the estimate $(\ref{ineqdiff2})$ to obtain the orbital stability of $\phi$. This last result gives us the proof of Theorem $\ref{maintheorem}$-v).

\begin{prop}
	Let $\alpha\in(1,2]$ be fixed. If $\ker\left(\mathcal{L}|_{X_0}\right)=[\phi']$ and $d\neq0$ for all $c>\frac{1}{2}$, the periodic wave $\phi$ obtained in Lemma $\ref{minlema}$ is orbitally stable in $\Hper$ in the sense of Definition \ref{defstab}.
\end{prop}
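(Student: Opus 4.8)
The plan is to reduce everything to an application of Proposition \ref{teoest}, whose hypotheses are the spectral conditions $n(\mathcal{L})=1$, $z(\mathcal{L})=1$ together with the existence of a function $\Phi\in H_{per}^{\alpha}(\mathbb{T})$ with $\langle\mathcal{L}\Phi,\Psi\rangle=0$ for all $\Psi\in\Upsilon_0$ and $\langle\mathcal{L}\Phi,\Phi\rangle<0$. First I would dispose of the spectral conditions. Since $\alpha\in(1,2]\subset\left(\frac{1}{2},2\right]$ and $d\neq0$ for all $c>\frac{1}{2}$, the assumption $\ker(\mathcal{L}|_{X_0})=[\phi']$ together with Proposition \ref{propL} gives $z(\mathcal{L})=1$, and Proposition \ref{propspec} then yields $n(\mathcal{L})=1$. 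Moreover, exactly as in the proof of Proposition \ref{spectmin}-ii), the hypothesis $d\neq0$ for all $c>\frac{1}{2}$ forces $d<0$ throughout, which supplies the bound $A(c)>c-\frac{1}{2}$ from \eqref{ineqdiff2}; this inequality is the only quantitative input I will need.

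The heart of the argument is to exhibit $\Phi$. Computing the gradient of $Q(u)=-V(u)+(c-1-A(c))M(u)$ at $\phi$ gives $Q'(\phi)=-\phi-\tfrac{1}{2}\phi^2+(c-1-A(c))$. Using the two identities $\mathcal{L}\phi=-\tfrac{1}{2}\phi^2-A(c)$ and $\mathcal{L}(1)=c-1-\phi$, both immediate from \eqref{eq15} and \eqref{eq16}, I would recognize the factorization
\begin{equation*}
Q'(\phi)=\mathcal{L}\phi+\mathcal{L}(1)=\mathcal{L}(1+\phi),
\end{equation*}
so that the natural candidate is $\Phi:=1+\phi\in H_{per}^{\alpha}(\mathbb{T})$. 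With this choice the orthogonality requirement is automatic: for every $\Psi\in\Upsilon_0$ one has $\langle\mathcal{L}\Phi,\Psi\rangle=\langle Q'(\phi),\Psi\rangle=0$ by the very definition of $\Upsilon_0$. (The factorization also makes transparent that $Q'(\phi)\perp\phi'$, so $\mathcal{L}\Phi=Q'(\phi)$ is consistent with $\mathcal{L}$ being self-adjoint with kernel $[\phi']$.)

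It then remains to verify the strict inequality $\langle\mathcal{L}\Phi,\Phi\rangle<0$. Here I would expand $\langle Q'(\phi),1+\phi\rangle$ using the zero-mean property $\int_{-\pi}^{\pi}\phi\,dx=0$, the normalization $\int_{-\pi}^{\pi}\phi^2\,dx=4\pi A(c)$ from \eqref{integconst}, and the relation $\int_{-\pi}^{\pi}\phi^3\,dx=4\mathcal{B}_c(\phi)$ coming from $\langle\mathcal{L}\phi,\phi\rangle=-2\mathcal{B}_c(\phi)$. A short computation reduces the quadratic form to
\begin{equation*}
\langle\mathcal{L}(1+\phi),1+\phi\rangle=2\pi\bigl(c-1-4A(c)\bigr)-2\mathcal{B}_c(\phi).
\end{equation*}
The bound $A(c)>c-\frac{1}{2}$ gives $c-1-4A(c)<1-3c<0$ for $c>\frac{1}{2}$, while $\mathcal{B}_c(\phi)>0$; hence the right-hand side is strictly negative. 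With $n(\mathcal{L})=1$, $z(\mathcal{L})=1$ and both conditions on $\Phi=1+\phi$ established, Proposition \ref{teoest} applies and gives the orbital stability of $\phi$ in $\Hper$.

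The main obstacle, and the only genuinely nonroutine step, is the identification of the test function $\Phi$; the remaining computations are mechanical once the factorization $Q'(\phi)=\mathcal{L}(1+\phi)$ is spotted. I would emphasize that, consistent with the philosophy stated in the introduction, no special knowledge of $\phi$ (no elliptic-function expansions, no direct evaluation of $\frac{d}{dc}P(\phi)$) enters the sign analysis: the decisive inequality $A(c)>c-\frac{1}{2}$ is precisely the bound already produced by the spectral-stability analysis, so orbital stability follows from the same input.
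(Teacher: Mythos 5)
Your proposal is correct and follows essentially the same route as the paper: both arguments reduce the claim to Proposition \ref{teoest} by first securing $n(\mathcal{L})=z(\mathcal{L})=1$ from Propositions \ref{propL} and \ref{propspec}, then choosing $\Phi=1+\phi$ so that $\mathcal{L}\Phi=Q'(\phi)$ yields the orthogonality on $\Upsilon_0$, and finally deducing $\langle\mathcal{L}(1+\phi),1+\phi\rangle<0$ from the bound $A(c)>c-\tfrac{1}{2}$ of \eqref{ineqdiff2}. The only (immaterial) difference is the last computation: you evaluate the quadratic form exactly as $2\pi\left(c-1-4A(c)\right)-2\mathcal{B}_c(\phi)$ and use $\mathcal{B}_c(\phi)>0$, whereas the paper bounds it via the Poincar\'e--Wirtinger inequality by $-2\pi c(4c-1)$; both give the required strict negativity for $c>\tfrac{1}{2}$.
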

\begin{proof}
	Since $n(\mathcal{L})=z(\mathcal{L})=1$ by Proposition $\ref{propspec}$, we only need to use Proposition $\ref{teoest}$ for a convenient $\Phi$. Indeed, let us consider $\Phi=1+\phi$. For $$Q(u)=-V(u)+(c-1-A(c))M(u),$$ we obtain $\langle Q'(\phi),\phi'\rangle=0$ and $\langle\mathcal{L}(1+\phi),\Psi\rangle=0$ for all $\Psi\in \Upsilon_0$. If  $\langle \mathcal{L}(1+\phi),1+\phi\rangle<0$, one has the orbital stability of $\phi$ in the energy space $\Hper$. To calculate $\langle \mathcal{L}(1+\phi),1+\phi\rangle$, we employ the Poincar\'e-Wirtinger inequality and $(\ref{ineqdiff2})$ to obtain
	\begin{equation}\label{simpfor2}
		\begin{array}{lll}\langle\mathcal{L}(1+\phi),1+\phi\rangle&=&\displaystyle2\pi(c-1)- \displaystyle \int_{-\pi}^{\pi}c(D^{\frac{\alpha}{2}}\phi)^2+\left(c+1\right)\phi^2dx\\\\
			&\leq& 2\pi(c-1)-4\pi(2c+1)A(c)\\\\
			&\leq& 2\pi(c-1)-4\pi(2c+1)\left(c-\frac{1}{2}\right)=-2\pi c(4c-1)<0.
		\end{array}
	\end{equation}
	By Proposition $\ref{teoest}$ one has the orbital stability in the energy space $\Hper$.
\end{proof}

\section*{Acknowledgments} The authors are grateful to the two anonymous referees for their valuable suggestions and comments which greatly improved the presentation of the paper. S. Amaral was supported by the regular doctorate scholarship from CAPES. F. Natali is partially supported by CNPq (grant 304240/2018-4), Funda\c{c}\~ao Arauc\'aria (grant 002/2017) and CAPES MathAmSud (grant 88881.520205/2020-01).

\end{document}